\DeclareMathAlphabet{\mathpzc}{OT1}{pzc}{m}{it}
\theoremstyle{plain}
\newtheorem{theorem}{Theorem}[section]
\newtheorem{lemma}[theorem]{Lemma}
\newtheorem{proposition}[theorem]{Proposition}
\newtheorem{corollary}[theorem]{Corollary}
\theoremstyle{definition}
\newtheorem{examples}[theorem]{Examples}
\newtheorem{example}[theorem]{Example}
\theoremstyle{remark}
\newenvironment{eqcond}{\begin{enumerate}}{\end{enumerate}}
\newcommand{\Rw}{\Rightarrow}
\newcommand{\hrw}{\hookrightarrow}
\newcommand{\fra}{\mathfrak{a}}
\newcommand{\frf}{\mathfrak{f}}
\newcommand{\frg}{\mathfrak{g}}
\newcommand{\frj}{\mathfrak{j}}
\newcommand{\frp}{\mathfrak{p}}
\newcommand{\frq}{\mathfrak{q}}
\newcommand{\fru}{\mathfrak{u}}
\newcommand{\frw}{\mathfrak{w}}
\newcommand{\frv}{\mathfrak{v}}
\newcommand{\frx}{\mathfrak{x}}
\newcommand{\fry}{\mathfrak{y}}
\newcommand{\calA}{\mathcal{A}}
\newcommand{\calB}{\mathcal{B}}
\newcommand{\frX}{\mathfrak{X}}
\newcommand{\frY}{\mathfrak{Y}}
\DeclareMathOperator{\ev}{ev}
\DeclareMathOperator{\Id}{Id}
\DeclareMathOperator{\cl}{cl}
\DeclareMathOperator{\dist}{dist}
\DeclareMathOperator{\Lim}{Lim}
\DeclareMathOperator{\match}{\#}
\DeclareMathOperator{\ForgetToV}{S}
\DeclareMathOperator{\ForgetToVAd}{A}
\DeclareMathOperator{\MFunctor}{M}
\DeclareMathOperator{\can}{can}
\DeclareMathOperator{\yoneda}{\mathpzc{y}}
\DeclareMathOperator{\upc}{\uparrow\!}
\DeclareMathOperator{\downc}{\downarrow\!}
\newcommand{\mate}[1]{\,^\ulcorner\! #1^\urcorner}
\newcommand{\umate}[1]{\,_\llcorner\! #1_\lrcorner}
\newcommand{\fspstr}[2]{\llbracket #1,#2\rrbracket}
\newcommand{\catfont}[1]{\mathsf{#1}}
\newcommand{\V}{\catfont{V}}
\newcommand{\two}{\catfont{2}}
\newcommand{\one}{\catfont{1}}
\newcommand{\quantale}{(\V,\otimes,k)}
\newcommand{\Pplus}{\catfont{P}_{\!\!{_+}}}
\newcommand{\Pmax}{\catfont{P}_{\!\!{_\text{max}}}}
\newcommand{\SET}{\catfont{Set}}
\newcommand{\TOP}{\catfont{Top}}
\newcommand{\AP}{\catfont{App}}
\newcommand{\SUP}{\catfont{Sup}}
\newcommand{\ORD}{\catfont{Ord}}
\newcommand{\MET}{\catfont{Met}}
\newcommand{\UMET}{\catfont{UMet}}
\newcommand{\Mat}[1]{#1\text{-}\catfont{Rel}}
\newcommand{\Mod}[1]{#1\text{-}\catfont{Mod}}
\newcommand{\UMat}[1]{#1\text{-}\catfont{URel}}
\newcommand{\Cat}[1]{#1\text{-}\catfont{Cat}}
\newcommand{\CatSep}[1]{#1\text{-}\catfont{Cat}_\mathrm{sep}}
\newcommand{\CatCompl}[1]{#1\text{-}\catfont{Cat}_\mathrm{cpl}}
\renewcommand{\to}{\longrightarrow}
\renewcommand{\mapsto}{\longmapsto}
\newcommand{\relto}{{\longrightarrow\hspace*{-2.8ex}{\mapstochar}\hspace*{2.6ex}}}
\newcommand{\modto}{{\longrightarrow\hspace*{-2.8ex}{\circ}\hspace*{1.2ex}}}
\newcommand{\kto}{\relbar\joinrel\rightharpoonup}
\newcommand{\krelto}{\,{\kto\hspace*{-2.5ex}{\mapstochar}\hspace*{2.6ex}}}
\newcommand{\kmodto}{\,{\kto\hspace*{-2.8ex}{\circ}\hspace*{1.3ex}}}
\newcommand{\kleisli}{\circ}
\newcommand{\homkleislileft}{\multimapinv}
\newcommand{\homcompleft}{\multimapdot}
\newcommand{\homcompright}{\multimapdotinv}
\newcommand{\Txi}{T_{\!\!_\xi}}
\newcommand{\monadfont}[1]{\mathbbm{#1}}
\newcommand{\mT}{\monadfont{T}}
\newcommand{\mI}{\monadfont{1}}
\newcommand{\mU}{\monadfont{U}}
\newcommand{\monad}{(T,e,m)}
\newcommand{\imonad}{(\Id,1,1)}
\newcommand{\umonad}{(U,e,m)}
\newcommand{\theoryfont}[1]{\mathscr{#1}}
\newcommand{\Tth}{\theoryfont{T}}
\newcommand{\Ith}{\theoryfont{I}}
\newcommand{\Uth}{\theoryfont{U}}
\newcommand{\toptheory}{(\mT,\V,\xi)}
\newcommand{\itheory}{(\mI,\V,1_{\V})}
\newcommand{\BC}{(BC)}
\newcommand{\doo}[1]{\overset{\centerdot}{#1}}
\newcommand{\eps}{\varepsilon}
\newcommand{\op}{\mathrm{op}}
\newcommand{\co}{\mathrm{co}}
\begin{document}

\title{Lawvere completion and separation via closure}

\author{Dirk Hofmann}
\thanks{The first author acknowledges partial financial assistance by Unidade de Investiga\c{c}\~{a}o e Desenvolvimento Matem\'{a}tica e Aplica\c{c}\~{o}es da Universidade de Aveiro/FCT}
\address{Departamento de Matem\'{a}tica\\ Universidade de Aveiro\\3810-193 Aveiro\\ Portugal}
\email{dirk@ua.pt}

\author{Walter Tholen}
\thanks{The second author acknowledges partial financial assistance by the Natural Sciences and Engineering Council of Canada.}
\address{Department of Mathematics and Statistics\\ York University\\ Toronto, Ontario, Canada\\ M3J 1P3}
\email{tholen@mathstat.yorku.ca}

\subjclass[2000]{18A05, 18D15, 18D20, 18B35, 18C15, 54B30, 54A20.}
\keywords{Quantale, $\V$-category, monad, topological theory, module, Yoneda lemma, closure operator, completeness}

\dedicatory{Dedicated to Bill Lawvere at the occasion of his seventieth birthday}

\begin{abstract}
For a quantale $\V$, first a closure-theoretic approach to completeness and separation in $\V$-categories is presented. This approach is then generalized to $\Tth$-categories, where $\Tth$ is a topological theory that entails a set monad $\mT$ and a compatible $\mT$-algebra structure on $\V$.
\end{abstract}

\maketitle

\section*{Introduction}

Bill Lawvere's 1973 milestone paper ``Metric spaces, generalized logic, and closed categories'' helped us to detect categorical structures in previously unexpected surroundings. His revolutionary idea was not only to regard individual metric spaces as categories (enriched over the monidal-closed category given by the non-negative extended real half-line, with arrows provided by $\geq$ and tensor by $+$), but also to expose the purely categorical nature of the key concept of the theory, Cauchy completeness. The first step to this end was to disregard metric conditions that actually obscure the categorical intuition. In fact, once one has dropped the symmetry requirement it seems much more natural to regard the metric $d$ of a space $X$ as the categorical hom and, given a Cauchy sequence $(a_n)$ in $X$, to associate with it the pair of functions
\begin{align*}
   \varphi(x)&= \lim d(a_n,x) &\text{and}&&   \psi(x) &= \lim d(x,a_n).
\end{align*}
Lawvere's great insight was to expose these functions as pairs of adjoint (bi)modules whose representabilty as
\begin{align*}
   \varphi(x) &= d(a,x) & \text{and} &&   \psi(x) = d(x,a)
\end{align*}
is facilitated precisely by a limit $a$ for $(a_n)$. Hence, a new notion of completeness for categories enriched over any symmetric monoidal-closed category $\V$ was born. Also in the enriched category context it is often referred to as Cauchy completeness. But since Lawvere's brilliant notion entails no sequences at all, just the representability requirement for bimodules, this name seems to be far-fetched and, contrary to popular belief, was in fact not proposed in his paper. Hence, here we use \emph{L-completeness} instead.

In the first part of this paper we give a quick introduction to $\V$-category theory (see \cite{Kel_EnrCat}) in the special case of a commutative unital quantale $\V$, focussing on the themes of L-completion and \emph{L-separation}. We are not aware of an explicit prior occurrence of the latter notion, and both themes are treated with the help of a new closure operator that arises most naturally in the 2-category $\Cat{\V}$, as follows. Call a $\V$-functor $m:M\to X$  \emph{L-dense} if $f\cdot m = g\cdot m$ implies $f\cong g$ for all $\V$-functors $f,g:X \to Y$; the \emph{L-closure} of a subobject $M$ of $X$ is then the largest subobject $\overline{M}$ of $X$ for which $M\to\overline{M}$ is L-dense. For L-separated $\V$-categories, L-dense simply means
epimorphism. The L-separated reflection of a $\V$-category $X$ is its image under the Yoneda functor $\yoneda:X\to\V^{X^\op} = \hat{X}$, and its L-completion is the L-closure of that image in $\hat{X}$.

The main part of the paper is devoted to a substantial generalization of the first part which, however, without the reader's recalling of the more familiar $\V$-category context, may be hard to motivate, especially in view of the considerable additional ``technical'' difficulties. The quantale $\V$ gets augmented by a \emph{topological theory} $\Tth = \toptheory$ which now entails also a Set-monad $\mT$ and a $\mT$-algebra structure $\xi$ on $\V$, with suitable compatibility conditions (see \cite{Hof_TopTh}). While a $\V$-category $X$ comes with a $\V$-relation $a : X \relto X$ (given by a function $a: X \times X \to \V$), $\Tth$-categories come with a $\V$-relation $a: TX \relto X$ making $X$ a lax $\mT$-algebra. For $\mT$ the ultrafilter monad and $\V = \two$, $\Cat{\Tth}$ provides Barr's \cite{Bar_RelAlg} relational description of the category of topological spaces (which, in turn, was based on Manes' \cite{Man_TriplCompAlg} description of compact Hausdorff spaces); for the same monad but with $\V$ the Lawvere half-line, one obains Lowen's approach spaces \cite{Low_Ap}, as shown by Clementino and Hofmann \cite{CH_TopFeat}.

The $\V$-to-$\Tth$ generalization must necessarily entail the provision of a Yoneda functor for a $\Tth$-category $X$. But what is $X^\op$ supposed to be in this highly asymmetric context? Fortunately, this problem was solved in \cite{CH_Compl}: the underlying set of $X^{\op}$ is $TX$, provided with a suitable $\Tth$-structure. This structure needs to be considered {in addition} to the free $\mT$-algebra structure on $TX$, leading to the surprising fact that the $\Tth$-equivalent of the Yoneda functor of the the familiar $\V$-context has now two equally important facets. Once one has fully understood this ``technical'' part of the general theory, it is in fact rather straightforward to extend the $\V$-categorical results on L-completion and L-separation to $\Tth$-categories, again with the help of the L-closure. We could therefore often keep the proofs in the $\Tth$-context quite short, especially when no new ideas beyond the initial ``Yoneda investment'' are needed.

Completeness of $\V$-categories and the induced topology was also investigated by Flagg \cite{Flagg_QuantCont,Flagg_ComplCont} (who called them $\V$-continuity spaces). An alternative approach to the categories of interest in this paper was presented by Burroni \cite{Bur_TCat}. 

\section{Preliminaries}

\subsection{The quantale V} Throughout the paper we consider a commutative and unital quantale $\V=\quantale$. Hence, $\V$ is a complete lattice with a commutative binary operation $\otimes$ and neutral element $k$, such that $u\otimes(-)$ preserves suprema, for all $u\in\V$. Consequentely, $\V$ has an ``internal hom'' $u\multimap(-)$, given by
\[
 z\le u\multimap v \iff z\otimes u\le v,
\]
for all $z,u,v\in\V$. Sometimes we write $v\multimapinv u$ instead of $u\multimap v$. The quantale is \emph{trivial} when $\V=1$; equivalently, when $k=\bot$ is the bottom element of $\V$. Non-trivial examples of quantales are the two-element chain $\two=(\{0,1\},\wedge,1)$, the extended positive half-line $\Pplus=([0,\infty]^\op,+,0)$, and $\Pmax=([0,\infty]^\op,\max,0)$; here $[0,\infty]^\op=([0,\infty],\ge)$, with the natural $\ge$. (We will use $\bigvee$, $\bigwedge$ to denote suprema, infima in $\V$, but use $\sup$, $\inf$, $\max$, etc.\ when we work in $[0,\infty]$ and refer to the natural order $\le$.)

\subsection{$\V$-relations}\label{VRel}
The category $\Mat{\V}$ has sets as objects, and a morphism $r:X\relto Y$ is simply a function $r:X\times Y\to\V$; its composite with $s:Y\relto Z$ is given by
\[s\cdot r(x,z)=\bigvee_{y\in Y}r(x,y)\otimes s(y,z).\]
There is a functor
\[\SET\to\Mat{\V}\]
which maps objects identically and interprets a map $f:X\to Y$ as a $\V$-relation $f_\circ:X\relto Y$:
\[
f_\circ(x,y)=
\begin{cases}
k &\text{if $f(x)=y$,}\\
\bot &\text{otherwise;}
\end{cases}
\]
we normally write $f$ instead of $f_\circ$. The functor is faithful precisely when $k>\bot$. The hom-sets of $\Mat{\V}$ carry the pointwise order of $\V$, so that $\Mat{\V}$ becomes a 2-category. In fact, $\Mat{\V}$ is $\SUP$-enriched (with $\SUP$ the category if complete lattices and suprema-preserving maps), hence it is a quantaloid. Consequentely, for every $r:X\relto Y$, composition by $r$ in $\Mat{\V}$ from either side has a right adjoint, given by \emph{extensions} and \emph{liftings} respectively:
\begin{align*}
(-)\cdot r \dashv (-)\homcompright r && r\cdot(-)\dashv r\homcompleft (-)\\
\underline{\hspace{1ex}t\cdot r\le s\hspace{1ex}} && \underline{\hspace{1ex}r\cdot r\le s\hspace{1ex}}\\
t\le s\homcompright r && t\le r\homcompleft s\\
\xymatrix{X\ar[rd]|-{\object@{|}}^s\ar[d]|-{\object@{|}}_r & \ \\ Y\ar@{}[ur]|-<<{\le} \ar[r]|-{\object@{|}}_t & Z}
&&
\xymatrix{Y & \\ X\ar[u]|-{\object@{|}}^r\ar@{}[ur]|-<<{\le} &
Z\ar[lu]|-{\object@{|}}_s\ar[l]|-{\object@{|}}^t}\\
s\homcompright r(y,z)=\bigwedge_{x\in X}s(x,z)\multimapinv r(x,y) &&
r\homcompleft s(z,x)=\bigwedge_{y\in Y}r(x,y)\multimap s(z,y)
\end{align*}
$\Mat{\V}$ has a contravariant involution
\[
 (\Mat{\V})^\op\to\Mat{\V}
\]
which maps objects identically and assigns to $r:X\relto Y$ its opposite relation $r^\circ:Y\relto X$. When applied to a map $f=f_\circ$, one obtains $f\dashv f^\circ$ in the 2-category $\Mat{\V}$.

\subsection{$\V$-categories}
A \emph{$\V$-category} $X=(X,a)$ is a set $X$ with a $\V$-relation $a:X\relto X$ satisfying $1_X\le a$, $a\cdot a\le a$; equivalentely,
\begin{align*}
 k\le a(x,x), &&  a(x,y)\otimes a(y,z)\le a(x,z)
\end{align*}
for all $x,y,z\in X$. A $\V$-functor $f:(X,a)\to (Y,b)$ must satisfy $f\cdot a\le b\cdot f$; equivalentely,
\[
 a(x,y)\le b(f(x),f(y))
\]
for all $x,y\in X$. The resulting category $\Cat{\V}$ is the category $\ORD$ of (pre)ordered sets if $\V=\two$, Lawvere's category $\MET$ of (pre)metric spaces if $\V=\Pplus$ (see \cite{Law_MetLogClo}), and the category $\UMET$ of (pre)ultrametric spaces if $\V=\Pmax$. For the trivial quantale one has $\Cat{\one}=\SET$. Furthermore, $\V=(\V,\multimap)$ with its internal hom becomes a $\V$-category.

$\Cat{\V}$ is a symmetric monoidal closed category, with tensor product
\begin{align*}
(X,a)\otimes(Y,b)=(X\times Y,a\otimes b), && a\otimes b((x,y),(x',y'))=a(x,x')\otimes b(y,y'),
\end{align*}
and internal hom
\begin{align*}
(X,a)\multimap(Y,b)=(\Cat{\V}(X,Y),[a,b]), && [a,b](f,g)=\bigwedge_{x\in X}b(f(x),g(x)).
\end{align*}
The $\otimes$-neutral object is $E=(E,k)$ (with a singleton set $E$), which generally must be distinguished from the terminal object $1=(1,\top)$ in $\Cat{\V}$. The internal hom describes the pointwise order if $\V=\two$,  and the usual $\sup$-metric if $\Pplus$,$\Pmax$.

\subsection{$\V$-modules}
The category $\Mod{\V}$ has $\V$-categories as objects, and a morphism $\varphi:(X,a)\modto(Y,b)$ is a $\V$-relation $\varphi:X\relto Y$ with $\varphi\cdot a\le\varphi$ and $b\cdot\varphi\le\varphi$. Since always $\varphi=\varphi\cdot 1_X\le\varphi\cdot a $ and $\varphi=1_Y\cdot\varphi\le b\cdot\varphi$, one actually has  $\varphi\cdot a=\varphi$ and $b\cdot\varphi=\varphi$ for a $\V$-module $\varphi:X\modto Y$. In particular, the $\V$-module $a:X\modto X$ assumes the role of the identity morphism on $X$ in $\Mod{\V}$, and we write $a=1_X^*$, in order not to confuse it with $1_X$ in $\Cat{\V}$. This notation is extended to arbitrary maps $f:X\to Y$ by
\begin{align*}
f_*=b\cdot f, && f^*=f^\circ\cdot b,
\end{align*}
and one easily verifies:
\begin{lemma}
The following are equivalent for a map $f:X\to Y$ between $\V$-categories $X$ and $Y$.
\begin{eqcond}
\item $f:X\to Y$ is a $\V$-functor.
\item $f_*$ is a $\V$-module $f_*:X\modto Y$.
\item $f^*$ is a $\V$-module $f^*:Y\modto X$.
\end{eqcond}
\end{lemma}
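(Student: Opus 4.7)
The plan is to unwind all three conditions into inequalities between $\V$-relations and then shuttle back and forth using the fact that $f \dashv f^\circ$ in $\Mat{\V}$ together with the unit and multiplication laws $1_X \le a$, $a\cdot a \le a$, $1_Y \le b$, $b\cdot b \le b$.

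The first step is to observe that in both (ii) and (iii) one of the two module conditions is automatic. For $f_* = b\cdot f$ we have $b \cdot f_* = b \cdot b \cdot f \le b \cdot f = f_*$, and for $f^* = f^\circ \cdot b$ we have $f^* \cdot b = f^\circ \cdot b \cdot b \le f^\circ \cdot b = f^*$. Hence (ii) reduces to the single inequality $b \cdot f \cdot a \le b \cdot f$, and (iii) reduces to $a \cdot f^\circ \cdot b \le f^\circ \cdot b$. I also record the two consequences of $1_Y \le b$ that will be used repeatedly: $f \le b\cdot f$ and $f^\circ \le f^\circ\cdot b$.

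For \textup{(i)} $\Leftrightarrow$ \textup{(ii)}: given $f\cdot a \le b\cdot f$, post-composing with $b$ and using $b\cdot b \le b$ yields $b\cdot f\cdot a \le b\cdot f$. Conversely, from $b\cdot f\cdot a \le b\cdot f$ and $f\cdot a \le b\cdot f\cdot a$ (because $f \le b\cdot f$) one immediately gets $f\cdot a \le b\cdot f$. For \textup{(i)} $\Leftrightarrow$ \textup{(iii)} I would use the adjunction $f \dashv f^\circ$ directly: $f\cdot a \le b\cdot f$ is equivalent to $a\cdot f^\circ \le f^\circ \cdot b$ (apply $(-)\cdot f^\circ$ and use $f\cdot f^\circ \le 1_Y$ in one direction; apply $f\cdot(-)$ and use $1_X \le f^\circ\cdot f$ in the other). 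Then post-composing with $b$ and using $b\cdot b \le b$ produces (iii), while conversely $a\cdot f^\circ \le a\cdot f^\circ\cdot b \le f^\circ\cdot b$ recovers the adjoint form of (i).

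There is no real obstacle; the argument is a short piece of calculus in the quantaloid $\Mat{\V}$. The only thing worth flagging is the observation in the first step that one half of the module conditions comes for free from the reflexivity and transitivity of $b$, which isolates the single non-trivial inequality that must be identified with the $\V$-functoriality of $f$.
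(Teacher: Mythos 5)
Your argument is correct and is precisely the routine quantaloid calculation that the paper leaves to the reader (``one easily verifies''): isolating the one non-trivial module inequality in each of (ii) and (iii) via $b\cdot b\le b$, and translating it into $f\cdot a\le b\cdot f$ using $1_Y\le b$ and the adjunction $f\dashv f^\circ$. Nothing to add.
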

Hence there are functors which make the following diagram commute.
\[
\xymatrix{\Cat{\V}\ar[r]^{(-)_*} & \Mod{\V} & (\Cat{\V})^\op\ar[l]_{(-)^*}\\
\SET\ar[u]\ar[r]_{(-)_\circ} & \Mat{\V}\ar[u] & \SET^\op\ar[l]^{(-)^\circ}\ar[u]}
\]
Here the vertical full embeddings are given by $X\mapsto (X,1_X)$. Just like $\Mat{\V}$ also $\Mod{\V}$ is a quantaloid, with the same pointwise order structure. But not just suprema of $\V$-modules formed in $\Mat{\V}$ are again $\V$-modules, also extensions and liftings. For example, for $\varphi:(X,a)\modto(Y,b)$, $\psi:(Z,c)\modto(Y,b)$, the lifting $\varphi\homcompleft\psi$ formed in $\Mat{\V}$ is indeed a $\V$-module $\varphi\homcompleft\psi:(Z,c)\to(Y,b)$: from $\psi\cdot c\le\psi$ and $\varphi\cdot(\varphi\homcompleft\psi)\le\psi$ one obtains $\varphi\cdot(\varphi\homcompleft\psi)\cdot c\le \psi$ and then $(\varphi\homcompleft\psi)\cdot c\le\varphi\homcompleft\psi$; similar $a\cdot(\varphi\homcompleft\psi)\le\varphi\homcompleft\psi$. Also the contravariant involution of $\Mat{\V}$ extends to $\Mod{\V}$ (e.g., if $\varphi:X\modto Y$, then $\varphi:X^\op\modto Y^\op$, where $X^\op=(X,a^\circ)$ is the usual opposite $\V$-category.), and one has the commutative diagram.
\[
\xymatrix{(\Mod{\V})^\op\ar[r]^-{(-)^\op} & \Mod{\V}\\ \Cat{\V}\ar[r]_{(-)^\op}\ar[u]^{(-)^*} & \Cat{\V}\ar[u]_{(-)_*}}
\]
As a quantaloid, $\Mod{\V}$ is in particular a 2-category, and for all $f:X\to Y$ in $\Cat{\V}$ one has
\[
 f_*\dashv f^*
\]
in $\Mod{\V}$. $\Cat{\V}$ inherits its 2-categorical structure from $\Mod{\V}$ via
\begin{align*}
f\le f' &:\iff f^*\le(f')^*\iff \forall x\in X,y\in Y\,.\,b(y,f(x))\le b(y,f'(x))\\
 &\iff f'_*\le f_*\iff \forall x\in X, y\in Y\,.\, b(f'(x),y)\le b(f(x),y)\\
 &\iff 1_X^*\le (f')^*\cdot f_*\iff \forall x\in X\,.\,k\le b(f(x),f'(x)).
\end{align*}
Hence, the previous diagram actually shows commuting 2-functors when we add dualization w.r.t.\ 2-cells (indicated by co) appropriately:
\[
\xymatrix{(\Mod{\V})^\op\ar[r]^-{(-)^\op} & \Mod{\V}\\ \Cat{\V}\ar[r]_{(-)^\op}\ar[u]^{(-)^*} & \Cat{\V}^\co\ar[u]_{(-)_*}}
\]
Of course, $\Cat{\V}$ being a 2-category, there is also a notion of adjointness in $\Cat{\V}$:
\begin{align*}
f\dashv g \text{ in $\Cat{\V}$} &\iff f\cdot g\le 1\text{ and } 1\le g\cdot f\text{ in $\Cat{\V}$}\\
&\iff g^*\cdot f^*\le 1^*\text{ and }1^*\le f^*\cdot g^*\text{ in $\Mod{\V}$}\\
&\iff g^*\dashv f^* \text{ in $\Mod{\V}$}\\
&\iff f_*=g^* \hspace{1em}\text{(since $f_*\dashv f^*$ in $\Mod{\V}$)}\\
&\iff g_*=f_* \hspace{1em}\text{(since $g_*\dashv g^*$ in $\Mod{\V}$)}\\
&\iff \forall x\in X, y\in Y\,.\,a(x,g(y))=b(f(x),y).
\end{align*}

\subsection{Yoneda}
$\V$-modules give rise to $\V$-functors, as follows
\begin{proposition}\label{CharModV}
The following are equivalent for $\V$-relations $\varphi:X\relto Y$ between $\V$-categories:
\begin{eqcond}
\item $\varphi:X\modto Y$ is a $\V$-module.
\item $\varphi:X^\op\otimes Y\to\V$ is a $\V$-functor.
\end{eqcond}
\end{proposition}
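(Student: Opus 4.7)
The plan is to verify the equivalence by unpacking both conditions pointwise and using the adjunction $\otimes\dashv\multimap$ defining the internal hom of $\V$. Recall that the $\V$-category $X^\op\otimes Y$ has underlying set $X\times Y$ with hom
\[
(a^\circ\otimes b)\bigl((x,y),(x',y')\bigr)=a(x',x)\otimes b(y,y'),
\]
while $\V=(\V,\multimap)$. Hence condition (ii) reads: for all $x,x'\in X$ and $y,y'\in Y$,
\[
a(x',x)\otimes b(y,y')\le\varphi(x,y)\multimap\varphi(x',y'),
\]
which by the defining adjunction of $\multimap$ is equivalent to
\[
(*)\qquad a(x',x)\otimes\varphi(x,y)\otimes b(y,y')\le\varphi(x',y').
\]

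First I would unpack condition (i): $\varphi\cdot a\le\varphi$ reads $a(x',x)\otimes\varphi(x,y)\le\varphi(x',y)$ (after renaming indices in the supremum formula for composition in $\Mat{\V}$), and $b\cdot\varphi\le\varphi$ reads $\varphi(x',y)\otimes b(y,y')\le\varphi(x',y')$. Chaining these two inequalities (using associativity of $\otimes$) immediately yields $(*)$, so (i)$\Rightarrow$(ii).

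For the converse (ii)$\Rightarrow$(i), I would specialize $(*)$. Setting $y'=y$ and using $k\le b(y,y)$ together with the neutrality of $k$ gives $a(x',x)\otimes\varphi(x,y)\le\varphi(x',y)$, i.e.\ $\varphi\cdot a\le\varphi$. Symmetrically, setting $x'=x$ and using $k\le a(x,x)$ gives $\varphi(x,y)\otimes b(y,y')\le\varphi(x,y')$, i.e.\ $b\cdot\varphi\le\varphi$.

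There is no real obstacle here; the only thing to be careful about is the bookkeeping of the ``op''-twist in the first variable, which is exactly what converts $a(x,x')$ into $a(x',x)$ and matches the direction in which the module condition $\varphi\cdot a\le\varphi$ propagates inequalities. The proof is essentially the $\V$-enriched analogue of the standard fact that bimodules $X^\op\times Y\to\catfont{Set}$ are the same as functors satisfying the two-sided compatibility, with the adjunction $\otimes\dashv\multimap$ doing all the work in passing between the ``exponential'' form (ii) and the ``action'' form (i).
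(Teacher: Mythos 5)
Your proof is correct; the paper states this proposition without proof, treating it as a routine verification, and your pointwise unpacking via the adjunction $\otimes\dashv\multimap$ (with the $\op$-twist converting $a(x,x')$ into $a(x',x)$) is exactly the standard argument being implicitly invoked. Both directions, including the specializations $y'=y$ and $x'=x$ using $k\le b(y,y)$ and $k\le a(x,x)$, are carried out correctly.
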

With $\varphi=a=1_X^*:X\modto X$ we obtain in particular the $\V$-functor $a:X^\op\otimes X\to\V$ whose transpose $\mate{a}$ is the Yoneda-$\V$-functor
\[
\yoneda:X\to\hat{X}:=(X^\op\multimap\V),\;x\mapsto a(-,x).
\]
The structure $\hat{a}$ of $\hat{X}$ is given by
\[
 \hat{a}(f,f')=\bigwedge_{x\in X}f(x)\multimap f'(x).
\]
\begin{lemma}\label{YonedaLemV}
For all $x\in X$ and $f\in\hat{X}$, $\hat{a}(\yoneda(x),f)=f(x)$.
\end{lemma}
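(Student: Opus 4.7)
The strategy is to unfold both sides of the asserted equality directly from the definitions and prove the two inequalities separately, using only (i) the unit law for the $\V$-category $X$ and (ii) the $\V$-functoriality of $f:X^{\op}\multimap\V$ as given by Proposition \ref{CharModV}.

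First I would expand the right-hand side of the claim. By definition of $\yoneda$ we have $\yoneda(x)(y)=a(y,x)$, so
\[
\hat{a}(\yoneda(x),f)=\bigwedge_{y\in X}\bigl(a(y,x)\multimap f(y)\bigr).
\]
For the inequality $\hat{a}(\yoneda(x),f)\le f(x)$, I would simply take $y=x$ in the infimum and use reflexivity $k\le a(x,x)$, which yields $a(x,x)\multimap f(x)\le k\multimap f(x)=f(x)$.

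For the reverse inequality $f(x)\le\hat{a}(\yoneda(x),f)$, I would translate the hypothesis that $f\in\hat{X}=X^{\op}\multimap\V$ is a $\V$-functor. This means $a^{\circ}(y,x)\le f(y)\multimap f(x)$ holds in $\V$, i.e.\ $a(x,y)\le f(y)\multimap f(x)$; swapping the roles of $x$ and $y$ and using the adjunction $(-)\otimes u\dashv u\multimap(-)$ gives $f(x)\otimes a(y,x)\le f(y)$, hence $f(x)\le a(y,x)\multimap f(y)$. Taking the infimum over $y\in X$ delivers the claim.

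No step is really an obstacle; the only mild subtlety is the bookkeeping of opposites (translating membership in $\hat{X}=X^{\op}\multimap\V$ into the correct direction of the inequality witnessing $\V$-functoriality of $f$), which is exactly what makes the Yoneda embedding pick out representable modules in the asymmetric enriched setting.
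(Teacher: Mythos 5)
Your proof is correct: both inequalities are established exactly as one would expect, the $\le$ direction by instantiating the infimum at $y=x$ and using $k\le a(x,x)$, and the $\ge$ direction by unwinding the $\V$-functoriality of $f:X^{\op}\to\V$ via the tensor--hom adjunction. The paper states this lemma without proof, and your argument is precisely the standard one the authors leave implicit, including the careful handling of $a^{\circ}$ versus $a$.
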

One calls a $\V$-functor $f:(X,a)\to(Y,b)$ \emph{fully faithful} if $a(x,x')=b(f(x),f(x'))$ for all $x,x'\in X$; equivalently, if $1_X^*=f^*\cdot f_*$ (since $f^*\cdot f_*=f^\circ\cdot b\cdot f$), or just $1_X^*\ge f^*\cdot f_*$ (since the other inequality comes for free).
\begin{corollary}
$\yoneda:X\to\hat{X}$ is fully faithful.
\end{corollary}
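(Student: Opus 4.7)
The plan is to recognize this as an immediate corollary of the Yoneda Lemma (Lemma~\ref{YonedaLemV}) just proved, and verify the inequality $1_X^* \ge \yoneda^* \cdot \yoneda_*$ that characterizes fully faithful $\V$-functors. Concretely, I want to show that for all $x, x' \in X$,
\[
a(x, x') = \hat{a}(\yoneda(x), \yoneda(x')).
\]

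First, I would recall that $\yoneda(x') = a(-, x') \in \hat{X}$, i.e., $\yoneda(x')$ is the element of $\hat{X}$ whose value at $x \in X$ is $a(x, x')$. The Yoneda Lemma, applied to the element $f := \yoneda(x') \in \hat{X}$, then gives
\[
\hat{a}(\yoneda(x), \yoneda(x')) = \yoneda(x')(x) = a(x, x'),
\]
which is precisely the fully faithful condition. No further computation is needed, since the inequality $1_X^* \le \yoneda^* \cdot \yoneda_*$ is automatic for any $\V$-functor (it expresses functoriality), so the identity above suffices.

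There is essentially no obstacle to overcome: the only thing to double-check is that the Yoneda Lemma is being applied to a legitimate element of $\hat{X}$, which amounts to observing that $a(-, x') : X^\op \to \V$ is a $\V$-functor. But this follows from Proposition~\ref{CharModV} together with the fact that $a = 1_X^* : X \modto X$ is a $\V$-module, so that its transpose $\mate{a} : X^\op \otimes X \to \V$ is a $\V$-functor whose curried form at $x'$ is exactly $\yoneda(x')$. Thus the proof reduces to a single application of Lemma~\ref{YonedaLemV}.
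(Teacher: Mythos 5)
Your proof is correct and is exactly the intended argument: the paper states this as an immediate corollary of Lemma~\ref{YonedaLemV}, and applying that lemma with $f=\yoneda(x')$ gives $\hat{a}(\yoneda(x),\yoneda(x'))=\yoneda(x')(x)=a(x,x')$, which is the fully faithful condition. Your additional checks (that $\yoneda(x')\in\hat{X}$ and that the inequality $1_X^*\le\yoneda^*\cdot\yoneda_*$ is automatic) match the paper's setup.
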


\subsection{L-separation}\label{LsepV}
For $\V$-functors $f,g:Z\to X$ we write $f\cong g$ if $f\le g$ and $g\le f$; equivalently, if $f^*=g^*$, or $f_*=g_*$. We call $X$ \emph{L-separated} if $f\cong g$ implies $f=g$, for all $f,g:Z\to X$. The full subcategory of $\Cat{\V}$ consisting of all L-separated $\V$-categories is denoted by $\CatSep{\V}$. Obviously, it suffices to consider $Z=E$ (the $\otimes$-neutral object) here: writing $x:E\to X$ in $\Cat{\V}$ instead of $x\in X$, we just note that $f_*=g_*$ implies
\[
(f\cdot x)_*=f_*\cdot x_*=g_*\cdot x_*=(g\cdot x)_*.
\]
This proves the equivalence of (i),(ii) of the following proposition.
\begin{proposition}\label{EqCondSepV}
The following statements are equivalent for a $\V$-category $X=(X,a)$.
\begin{eqcond}
\item $X$ is L-separated.
\item $x\cong y$ implies $x=y$, for all $x,y\in X$.
\item For all $x,y\in X$, if $a(x,y)\ge k$ and $a(y,x)\ge k$, then $x=y$.
\item The Yoneda functor $\yoneda:X\to\hat{X}$ is injective.
\end{eqcond}
\end{proposition}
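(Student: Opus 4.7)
The equivalence (i)$\Leftrightarrow$(ii) is already essentially laid out in the paragraph preceding the statement, so my plan is to close the chain by establishing (ii)$\Leftrightarrow$(iii) and (ii)$\Leftrightarrow$(iv), reusing the machinery already in place.

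For (ii)$\Leftrightarrow$(iii) I would simply unfold $x\cong y$ for points $x,y:E\to X$ through the 2-categorical description of $\Cat{\V}$ recorded earlier: $f\le f'$ in $\Cat{\V}$ means $k\le b(f(z),f'(z))$ for every $z$ in the domain. Applied to the singleton domain $E$, this immediately rewrites $x\le y$ as $k\le a(x,y)$, so $x\cong y$ becomes exactly $k\le a(x,y)$ and $k\le a(y,x)$, which is (iii).

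For (ii)$\Leftrightarrow$(iv) my plan is to exploit full faithfulness of $\yoneda$, which was just established as a corollary of the Yoneda lemma. From $a(x,y)=\hat{a}(\yoneda(x),\yoneda(y))$ (or, directly, from $\hat{a}(\yoneda(x),\yoneda(y))=\yoneda(y)(x)=a(x,y)$ by Lemma \ref{YonedaLemV}), we get that $x\cong y$ in $X$ is equivalent to $\yoneda(x)\cong\yoneda(y)$ in $\hat{X}$. For (ii)$\Rightarrow$(iv): if $\yoneda(x)=\yoneda(y)$ then trivially $\yoneda(x)\cong\yoneda(y)$, hence $x\cong y$, hence $x=y$ by (ii). For the converse (iv)$\Rightarrow$(ii) I would first make the small side remark that $\hat{X}$ is always L-separated: indeed $\hat{a}(f,f')\ge k$ forces $f(x)\le f'(x)$ for every $x$ via the formula $\hat{a}(f,f')=\bigwedge_x f(x)\multimap f'(x)$, and combined with the reverse inequality this gives $f=f'$. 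Hence $\yoneda(x)\cong\yoneda(y)$ already forces $\yoneda(x)=\yoneda(y)$, so if $\yoneda$ is injective and $x\cong y$, then $\yoneda(x)=\yoneda(y)$ and thus $x=y$.

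I do not anticipate a real obstacle here: the entire proof is a bookkeeping exercise, with the only mildly non-trivial ingredient being the observation that $\hat{X}$ is L-separated, used to bridge between the weak condition $\yoneda(x)\cong\yoneda(y)$ and the strict condition $\yoneda(x)=\yoneda(y)$ that is needed to invoke injectivity of $\yoneda$.
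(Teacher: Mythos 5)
Your proposal is correct and follows essentially the same route as the paper: both rest on the identity $\hat{a}(\yoneda(x),\yoneda(y))=a(x,y)$, which the paper exploits directly via the chain $\yoneda(x)=\yoneda(y)\iff x^*=y^*\iff x\cong y\iff k\le a(x,y)$ and $k\le a(y,x)$. Your only repackaging is to route (ii)$\iff$(iv) through full faithfulness of $\yoneda$ together with the L-separatedness of $\hat{X}$ (which you correctly verify on points, avoiding any circularity with the corollary the paper states only afterwards), rather than identifying $\yoneda(x)$ with $x^*$ outright.
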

\begin{proof}
For (ii)$\iff$(iii)$\iff$(iv) one observes
\begin{align*}
\yoneda(x)=\yoneda(y) &\iff x^\circ\cdot a=y^\circ\cdot a\\
&\iff x^*=y^*\\
&\iff x\le y\text{ and } y\le x\\
&\iff k\le a(x,y)\text{ and } k\le a(y,x).\qedhere
\end{align*}
\end{proof}
\begin{corollary}
The $\V$-category $\V$ is L-separated. For all $\V$-categories $X$, $Y$, if $Y$ is L-separated, $X\multimap Y$ is also L-separated. In particular, $\hat{X}$ is L-separated, for every $X$.
\end{corollary}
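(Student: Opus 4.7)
The plan is to verify each statement by invoking condition (iii) of Proposition \ref{EqCondSepV}, which reduces L-separation to a check on pairs of elements satisfying mutual $k$-inequalities for the hom.

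First I would treat $\V$ itself. Its $\V$-category structure has hom given by $\multimap$, so the task is to show that $k\le u\multimap v$ and $k\le v\multimap u$ force $u=v$. This is immediate from the defining adjunction of the internal hom: $k\le u\multimap v$ is equivalent to $k\otimes u\le v$, i.e.\ $u\le v$, since $k$ is the $\otimes$-unit. The reverse inequality follows symmetrically.

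Next I would handle the closure under $\multimap$. Given $\V$-functors $f,g:X\to Y$ with $k\le[a,b](f,g)$ and $k\le[a,b](g,f)$, the explicit formula
\[
 [a,b](f,g)=\bigwedge_{x\in X}b(f(x),g(x))
\]
yields $k\le b(f(x),g(x))$ and $k\le b(g(x),f(x))$ for every $x\in X$. Since $Y$ is L-separated, condition (iii) then gives $f(x)=g(x)$ for every $x$, whence $f=g$. Hence $X\multimap Y$ satisfies (iii) and is L-separated.

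Finally, since $\hat{X}=X^{\op}\multimap\V$ by definition and $\V$ is L-separated by the first step, the second step applied with $Y=\V$ delivers L-separation of $\hat{X}$. There is no real obstacle here; the only point that requires care is the use of the $\otimes$-$\multimap$ adjunction in the first step, so that one correctly identifies $k\le u\multimap v$ with $u\le v$ rather than inadvertently inserting a spurious tensor factor.
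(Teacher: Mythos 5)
Your argument is correct and matches the paper's proof essentially verbatim: both verify condition (iii) of Proposition \ref{EqCondSepV}, using the $\otimes$-$\multimap$ adjunction with the unit $k$ for the quantale itself, and the pointwise formula for $[a,b]$ together with L-separation of $Y$ for the function space, then specialize to $\hat{X}=X^{\op}\multimap\V$. No issues.
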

\begin{proof}
$k\le u\multimap v$ and $k\le v\multimap u$ means $u\le v$ and $v\le u$ in $\V$, hence $u=v$. For $Y=(Y,b)$ and $X=(X,a)$, $k\le [a,b](f,g)$ in $X\multimap Y$ means $k\le b(f(x),g(x))$ for all $x\in X$, which makes the second statement obvious.
\end{proof}

\subsection{L-completeness}\label{ComplV}
Following Lawvere \cite{Law_MetLogClo} we call a $\V$-category $X$ \emph{L-complete} if every adjunction $\varphi\dashv\psi:X\modto Z$ in $\Mod{\V}$ is of the form $f_*\dashv f^*$, for a $\V$-functor $f:Z\to X$. Clearly, if $X$ is L-separated, such a presentation is unique. As in \ref{LsepV}, it suffices to consider $Z=E$ here; but we need the Axiom of Choice here.
\begin{proposition}
The following statements are equivalent for a $\V$-category $X$.
\begin{eqcond}
\item $X$ is L-complete.
\item Each left adjoint $\V$-module $\varphi:E\modto X$ is of the form $\varphi=x_*$ for some $x$ in $X$.
\item Each right adjoint $\V$-module $\psi:X\modto E$ is of the form $\psi=x^*$ for some $x$ in $X$.
\end{eqcond}
\end{proposition}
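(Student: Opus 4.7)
My approach is to handle the implications in the order (i)$\Rightarrow$(ii), (i)$\Rightarrow$(iii), then the easy equivalence (ii)$\Leftrightarrow$(iii), and finally the substantive (ii)$\Rightarrow$(i). The first two are immediate: specialising (i) to the case $Z=E$, any left (resp.\ right) adjoint $\V$-module out of (into) $E$ appears in a module adjunction, and is therefore of the form $x_*$ (resp.\ $x^*$) for a $\V$-functor $x:E\to X$, i.e.\ an element of $X$. For the equivalence (ii)$\Leftrightarrow$(iii), I will use that $\Mod{\V}$ is a quantaloid, hence a 2-category in which adjoints are unique: if $\psi:X\modto E$ has left adjoint $\varphi$ and (ii) holds, then $\varphi=x_*$ for some $x$, and since also $x_*\dashv x^*$ in $\Mod{\V}$, uniqueness forces $\psi=x^*$; the converse is symmetric.

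The real content lies in (ii)$\Rightarrow$(i), which I plan to establish by constructing the desired $\V$-functor pointwise. Given an adjunction $\varphi\dashv\psi$ in $\Mod{\V}$ with $\varphi:Z\modto X$ and $\psi:X\modto Z$, each $z\in Z$ can be viewed as a $\V$-functor $z:E\to Z$, yielding $z_*\dashv z^*$; composing with the given adjunction produces the module adjunction
\[
 \varphi\cdot z_*\,\dashv\,z^*\cdot\psi\,:\,E\modto X.
\]
Applying (ii) to the left adjoint $\varphi\cdot z_*$, I pick $f(z)\in X$ with $\varphi\cdot z_*=f(z)_*$. Assembling these elements over all $z\in Z$ (this is the step that invokes the Axiom of Choice) produces a function $f:Z\to X$.

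The most delicate step will be verifying that $f$ so defined satisfies $f_*=\varphi$ as $\V$-relations. A direct computation using the structure $c$ on $Z$ gives $z_*(\ast,y)=c(z,y)$, hence
\[
 (\varphi\cdot z_*)(\ast,x)=\bigvee_{y\in Z}c(z,y)\otimes\varphi(y,x)=(\varphi\cdot c)(z,x)=\varphi(z,x),
\]
by the module identity $\varphi\cdot c=\varphi$, whereas $f(z)_*(\ast,x)=a(f(z),x)=f_*(z,x)$. Matching these at every $(z,x)$ gives $f_*=\varphi$. Since $\varphi$ is a $\V$-module, the earlier lemma characterising $\V$-functors via their associated $\V$-modules then guarantees that $f$ is a $\V$-functor, and $f^*=\psi$ follows from uniqueness of right adjoints in $\Mod{\V}$. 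The main obstacle is thus not conceptual but resides entirely in the choice-dependent pointwise construction of $f$; once $f_*=\varphi$ is checked, the remaining bookkeeping is automatic.
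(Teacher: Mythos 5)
Your proof is correct and follows exactly the route the paper intends: the paper omits the argument, saying only that ``as in \ref{LsepV}, it suffices to consider $Z=E$\dots but we need the Axiom of Choice here'', and your pointwise construction of $f$ via $\varphi\cdot z_*\dashv z^*\cdot\psi$, the choice of $f(z)$ with $f(z)_*=\varphi\cdot z_*$, and the verification $f_*=\varphi$ using $\varphi\cdot c=\varphi$ is precisely the fleshed-out version of that sketch. No gaps.
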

Elements in $\hat{X}$ are $\V$-functors $X^\op\cong X^\op\otimes E\to\V$ which, by Proposition \ref{CharModV}, may be considered as a $\V$-module $\psi:X\modto E$. Suppose this $\V$-module has a left adjoint $\varphi:E\modto X$. From $\varphi\cdot\psi\le 1_X^*$ one obtains $\varphi\le 1_X^*\homcompright\psi$ (see \ref{VRel}), and from $(1_X^*\homcompright\psi)\cdot\psi\le 1_X^*$ and $\psi\cdot\varphi\ge 1_E^*$ one has $1_X^*\homcompright\psi\le\varphi$. Hence, if $\psi$ is right adjoint, its left adjoint must necessarily be $1_X^*\homcompright\psi$; moreover $(1_X^*\homcompright\psi)\cdot\psi\le 1_X^*$ always holds. Therefore:
\begin{proposition}
A $\V$-module $\psi:X\modto E$ (with $X=(X,a)$) is right adjoint if, and only if, $1_E^*\le\psi\cdot  (1_X^*\homcompright\psi)$, that is, if
\begin{equation}\label{tightV}
 k\le\bigvee_{y\in Y}\psi(y)\otimes\left(\bigwedge_{x\in X}a(x,y)\multimapinv\psi(x)\right).
\end{equation}
\end{proposition}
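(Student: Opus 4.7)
The plan is to recognize that the paragraph preceding the proposition has essentially done the work, and to assemble those observations into a clean two-way argument, followed by unwinding the inequality into the concrete form \eqref{tightV}.

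For the forward direction, I would simply invoke what was already shown: if $\psi:X\modto E$ has some left adjoint $\varphi:E\modto X$, then $\varphi\cdot\psi\le 1_X^*$ gives $\varphi\le 1_X^*\homcompright\psi$, while $(1_X^*\homcompright\psi)\cdot\psi\le 1_X^*$ together with $\psi\cdot\varphi\ge 1_E^*$ gives $1_X^*\homcompright\psi\le\varphi$. Hence $\varphi=1_X^*\homcompright\psi$, and therefore $1_E^*\le\psi\cdot\varphi=\psi\cdot(1_X^*\homcompright\psi)$, which is the inequality in question.

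For the reverse direction, I would define $\varphi:=1_X^*\homcompright\psi$ as the \emph{candidate} left adjoint. Since $\psi$ is a $\V$-module and liftings (hence also extensions) of $\V$-modules formed in $\Mat{\V}$ are again $\V$-modules (as noted in the subsection on $\V$-modules), $\varphi$ is a $\V$-module $E\modto X$. The counit inequality $\varphi\cdot\psi\le 1_X^*$ holds by the very definition of the extension, and the unit inequality $1_E^*\le\psi\cdot\varphi$ is precisely the hypothesis. Therefore $\varphi\dashv\psi$ in $\Mod{\V}$, and $\psi$ is right adjoint.

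It remains to spell out that the condition $1_E^*\le\psi\cdot(1_X^*\homcompright\psi)$ coincides with \eqref{tightV}. Writing $\star$ for the unique element of $E$, we have $1_E^*(\star,\star)=k$. Using the explicit formula for extensions given in \ref{VRel} with $r=\psi$ and $s=1_X^*=a$, one computes $(1_X^*\homcompright\psi)(\star,y)=\bigwedge_{x\in X}a(x,y)\multimapinv\psi(x)$, and then
\[
\psi\cdot(1_X^*\homcompright\psi)(\star,\star)=\bigvee_{y\in X}\psi(y)\otimes\Bigl(\bigwedge_{x\in X}a(x,y)\multimapinv\psi(x)\Bigr),
\]
which is exactly the right-hand side of \eqref{tightV}. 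There is no genuine obstacle here: the entire statement is a book-keeping consequence of the lifting/extension adjunctions and the observation (already in the preceding paragraph) that $1_X^*\homcompright\psi$ is the \emph{only} possible left adjoint of $\psi$.
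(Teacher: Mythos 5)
Your proposal is correct and follows the same route as the paper: the paragraph preceding the proposition already identifies $1_X^*\homcompright\psi$ as the only possible left adjoint and observes that the counit inequality $(1_X^*\homcompright\psi)\cdot\psi\le 1_X^*$ holds automatically, so the proposition reduces to the unit condition, exactly as you argue. Your explicit check that $1_X^*\homcompright\psi$ is a $\V$-module and the unwinding of the formula (with the index set correctly being $X$, not $Y$) are faithful to the paper's intent.
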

Note that $\bigwedge_{x\in X}a(x,y)\multimapinv\psi(x)=\hat{a}(\psi,\yoneda(y))$. We call a $\V$-functor $\psi:X^\op\to\V$ \emph{tight} if, as a $\V$-module $X\modto E$, it is right adjoint, that is, if it satisfies \eqref{tightV}. We consider
\[
 \tilde{X}=\{\psi\in\hat{X}\mid \psi\text{ tight}\}
\]
as a full $\V$-subcategory of $\hat{X}$. Our goal is to exhibit $\tilde{X}$ as an ``L-completion'' of $X$. 
\begin{examples}
(1) $\V=\two$. A $\V$-functor $X^\op\to\two$ is the characteristic function of a down-closed set $A$ in the (pre)ordered set $X$. Condition \eqref{tightV} the reads as
\[
 \exists y\in A\forall x\in A\,.\,x\le y,
\]
so that $A=\downc y$. In other words, $\tilde{X}$ is simply the image of the Yoneda functor $\yoneda:X\to\hat{X},\, y\mapsto\downc y$.\\
(2) $\V=\Pplus$. A tight $\V$-functor $X^\op\to\V$ is given by a function $\psi:X\to[0,\infty]$ with
\begin{align*}
& \psi(y)\le\psi(x)\;\Rw\; \psi(x)-\psi(y)\le a(x,y)\hspace{1em}(x,y\in X),\\
& \inf_{y\in Y}(\psi(y)+\sup_{\substack{x\in X,\\\psi(x)\le a(x,y)}}(a(x,y)-\psi(x)))=0, 
\end{align*}
here $a$ is the generalized metric on $X$. If $a$ is symmetric (so that $a=a^\circ$), these conditions are more conveniently describes as
\begin{align*}
& |\psi(x)-\psi(y)|\le a(x,y)\le\psi(x)+\psi(y)\hspace{1em}(x,y\in X),\\
& \inf_{x\in X}\psi(x)=0.
\end{align*}
These are precisely the \emph{supertight} maps on $X$ considered in \cite{LS_Compl}.\\
(3) $\V=\Pmax$. Here the two conditions of (2) change to
\begin{align*}
& \psi(y)<\psi(x)\;\Rw\;\psi(x)\le a(x,y)\hspace{1em}(x,y\in X),\\
& \inf_{y\in Y}(\max(\psi(y),\sup_{\substack{x\in X,\\\psi(x)< a(x,y)}}(a(x,y))))=0.
\end{align*}
\end{examples}

\subsection{L-injectivity}\label{LInjV}
A $\V$-functor $f:(X,a)\to (Y,b)$ is called \emph{L-dense} if $f_*\cdot f^*=1_Y^*$; that is, if $b=b\cdot f\cdot f^\circ\cdot b$, or
\[
b(y,y')=\bigvee_{x\in X} b(y,f(x))\otimes b(f(x),y')
\]
for all $y,y'\in Y$. L-dense $\V$-functors have good composition-cancellation properties.
\begin{lemma}\label{CompCancelV}
Let $f:X\to Y$ and $g:Y\to Z$ be $\V$-functors. Then the following assertions hold.
\begin{enumerate}
\item $f$, $g$ L-dense $\Rw$ $g\cdot f$ L-dense.
\item $g\cdot f$ L-dense $\Rw$ $g$ L-dense.
\item $g\cdot f$ L-dense, $g$ fully faithful $\Rw$ $f$ L-dense.
\item $g\cdot f$ fully faithful, $f$ L-dense $\Rw$ $g$ fully faithful.
\end{enumerate}
\end{lemma}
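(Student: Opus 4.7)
The whole lemma becomes transparent once one works entirely in $\Mod{\V}$ and uses the two identities
\[
  (g\cdot f)_* = g_*\cdot f_*, \qquad (g\cdot f)^* = f^*\cdot g^*,
\]
together with the adjunction $f_*\dashv f^*$ (and similarly for $g$), which always yields the inequalities
\[
  f_*\cdot f^* \le 1_Y^*, \qquad 1_X^* \le f^*\cdot f_*.
\]
Thus L-density of $f$ is the statement that the first inequality is an equality (so one only ever has to verify $1_Y^*\le f_*\cdot f^*$), and full faithfulness of $f$ is the statement that the second is an equality. This reduces the whole lemma to short manipulations of modules, with no element-wise calculation needed.

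For (1), simply compute $(gf)_*\cdot(gf)^* = g_*\cdot(f_*\cdot f^*)\cdot g^* = g_*\cdot 1_Y^*\cdot g^* = 1_Z^*$. For (2), from $f_*\cdot f^*\le 1_Y^*$ one gets $1_Z^* = g_*\cdot(f_*\cdot f^*)\cdot g^* \le g_*\cdot g^*$, and the reverse inequality is the counit, so equality follows.

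The pattern for (3) and (4) is identical and is the only mildly clever step: pre- and post-compose the L-dense (resp.\ fully faithful) equality for $gf$ by $g^*$ and $g_*$ (resp.\ $f_*$ and $f^*$) so that two copies of the fully-faithful/dense equality for the outer functor appear. Explicitly, for (3) I would compute
\[
  g^*\cdot\bigl(g_*\cdot f_*\cdot f^*\cdot g^*\bigr)\cdot g_*
   \;=\; (g^*\cdot g_*)\cdot f_*\cdot f^*\cdot(g^*\cdot g_*)
   \;=\; 1_Y^*\cdot f_*\cdot f^*\cdot 1_Y^*
   \;=\; f_*\cdot f^*,
\]
while the left-hand side equals $g^*\cdot 1_Z^*\cdot g_* = g^*\cdot g_* = 1_Y^*$. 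Symmetrically for (4): conjugating $1_X^* = f^*\cdot g^*\cdot g_*\cdot f_*$ by $f_*$ on the left and $f^*$ on the right, and using $f_*\cdot f^* = 1_Y^*$ twice, gives $g^*\cdot g_* = 1_Y^*$.

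No step is genuinely difficult; the only mild obstacle is recognising the right sandwiching in (3) and (4). Once one has internalised that L-density and full faithfulness are the two ``halves'' of an adjoint equivalence in $\Mod{\V}$ via $f_*\dashv f^*$, all four implications reduce to one line of associativity and cancellation. I would present the proof as a single paragraph treating (1) and (2) together (direct composition and the adjunction inequality) and a second paragraph treating (3) and (4) by the symmetric sandwiching argument.
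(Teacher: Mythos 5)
Your proof is correct, and it is exactly the argument the paper intends: the lemma is stated without proof precisely because, with L-density defined as $f_*\cdot f^*=1_Y^*$ and full faithfulness as $f^*\cdot f_*=1_X^*$, everything follows from the functoriality $(g\cdot f)_*=g_*\cdot f_*$, the adjunction inequalities, and the fact that $\V$-modules absorb the identity modules $1^*$. Your sandwiching trick in (3) and (4) is the standard way to extract the middle factor, and all four computations check out.
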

A fully faithful L-dense $\V$-functor is an \emph{L-equivalence}. Hence, $f$ is an L-equivalence if, and only if, $f_*$ (or $f^*$) is an isomorphism in $\Mod{\V}$. A $\V$-category $Z$ is \emph{pseudo-injective} if, for every fully faithful $\V$-functor $f:X\to Y$ and for all $\V$-functors $h:X\to Z$ there is a $\V$-functor $g:Y\to Z$ with $g\cdot f\cong h$; if strict equality is obtainable, we call $Z$ injective. $Z$ is \emph{L-injective} if this extension property is required only for L-equivalences $f$. Hence, injectivity implies pseudo-injectivity, and every pseudo-injective $V$-category is also L-injective.
\begin{lemma}
The $\V$-category $\V$ is injective, hence in particular L-injective.
\end{lemma}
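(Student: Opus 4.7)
The plan is to construct the required extension by an explicit (left) Kan-extension formula. Given a fully faithful $\V$-functor $f:(X,a)\to(Y,b)$ and a $\V$-functor $h:(X,a)\to\V$, I define $g:Y\to\V$ by
\[
g(y)=\bigvee_{x\in X}h(x)\otimes b(f(x),y).
\]
Conceptually this is the $\V$-module $f_*\cdot\tilde h:E\modto Y$ transported back to a $\V$-functor $Y\to\V$, where $\tilde h:E\modto X$ is the module associated with $h$ via Proposition~\ref{CharModV} applied to $E^\op\otimes X\cong X$.

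The first step is to verify that $g$ is a $\V$-functor, i.e.\ $g(y)\otimes b(y,y')\le g(y')$, equivalently $b(y,y')\le g(y)\multimap g(y')$. This follows from the transitivity of $b$ together with the distributivity of $\otimes$ over suprema: each term $h(x)\otimes b(f(x),y)\otimes b(y,y')$ is bounded above by $h(x)\otimes b(f(x),y')$, which is a summand in $g(y')$.

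The second step is to establish the strict equality $g\cdot f=h$. For $x'\in X$, full faithfulness of $f$ gives $b(f(x),f(x'))=a(x,x')$, hence
\[
g(f(x'))=\bigvee_{x\in X}h(x)\otimes a(x,x').
\]
Choosing $x=x'$ and using $k\le a(x',x')$ yields $h(x')\le g(f(x'))$, while $h$ being a $\V$-functor gives $h(x)\otimes a(x,x')\le h(x')$ for every $x$, so $g(f(x'))\le h(x')$ as well. Since the equality is strict (not merely up to $\cong$), $\V$ is injective, and every injective $\V$-category is in particular L-injective. I do not anticipate any genuine obstacle: the only subtlety is selecting the correct formula, which is essentially forced by the module-theoretic picture, and then deploying full faithfulness at precisely the point where $b(f(x),f(x'))$ is replaced by $a(x,x')$.
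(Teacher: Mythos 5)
Your proof is correct and follows essentially the same route as the paper: the extension you define, $g(y)=\bigvee_{x\in X}h(x)\otimes b(f(x),y)$, is exactly the $\V$-functor $\psi=f_*\cdot\varphi$ that the paper constructs via the module factorization $\varphi=f^*\cdot\psi$, and the paper records this identical explicit formula immediately after its proof. The only difference is that you verify functoriality and the strict equality $g\cdot f=h$ elementwise, where the paper appeals to the module calculus and Proposition~\ref{CharModV}; both verifications are sound.
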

\begin{proof}
Let $f:X\to Y$ be fully faithful and $\varphi:X\to\V$ be any $\V$-functor. Then the $\V$-module $\varphi:E\modto X$ factors as $\varphi=f^*\cdot\psi$, with $\psi=f_*\cdot\varphi$. But the $\V$-module $f^*\cdot^\psi$ corresponds to the $\V$-functor $\psi\cdot f$, hence $\psi\cdot f=\varphi$.
\begin{align*}
\xymatrix{& \V\\ X\ar[r]_f\ar[ur]^\varphi & Y\ar[u]_\psi} &
\xymatrix{Y\ar@{-^{>}}|-{\object@{o}}[r]^{f^*} & X\\ E\ar@{-^{>}}|-{\object@{o}}[u]^{\psi=f_*\cdot\varphi}\ar@{-^{>}}|-{\object@{o}}[ur]_\varphi}
\end{align*}\
\end{proof}
Note that the $\V$-functor $\psi$ has been constructed effectively, with 
\[\psi(y)=\bigvee_{x\in X}\varphi(x)\otimes b(f(x),y).\]
In case $\V=\two$, this means
\[
\psi(y)=\top\iff \exists x\in X\,.\,(\varphi(x)=\top\text{ and }f(x)=y),
\]
and for $\V=\Pplus$ we have
\[\psi(y)=\inf_{x\in X}(\varphi(x)+b(f(x),y)).\]
\begin{proposition}
For all $\V$-categories $X$, $Y$, if $Y$ is pseudo-injective or L-injective, $X\multimap Y$ has the respective property. In particular, $\hat{X}$ is injective.
\end{proposition}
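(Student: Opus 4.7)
The idea is to transfer the extension problem across the tensor-hom adjunction $(-)\otimes X\dashv X\multimap(-)$ of $\Cat{\V}$. Given a fully faithful $\V$-functor (respectively, an L-equivalence) $f:A\to B$ and a $\V$-functor $h:A\to X\multimap Y$, I would transpose $h$ to $\tilde h:A\otimes X\to Y$, extend $\tilde h$ along $f\otimes 1_X:A\otimes X\to B\otimes X$ using the (pseudo/L-)injectivity of $Y$ to produce $\tilde g:B\otimes X\to Y$ with $\tilde g\cdot(f\otimes 1_X)\cong\tilde h$, and then transpose back to $g:B\to X\multimap Y$. Since the transposition is effected by composition with evaluation, it respects the pointwise order defining $\cong$, so $\tilde g\cdot(f\otimes 1_X)\cong\tilde h$ transposes to $g\cdot f\cong h$; and strict equality likewise transposes to strict equality, which will take care of the ``in particular'' statement once one observes that $\V$ is injective.

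The one point that is not purely formal is the claim that $f\otimes 1_X$ inherits the relevant property from $f$. Full faithfulness is immediate, since
\[
(a\otimes c)\bigl((x,z),(x',z')\bigr)=a(x,x')\otimes c(z,z')=b(f(x),f(x'))\otimes c(z,z')
\]
whenever $f$ is fully faithful. For L-density, a direct computation in $\Mat{\V}$ gives
\[
(f\otimes 1_X)_*=f_*\otimes 1_X^*,\qquad (f\otimes 1_X)^*=f^*\otimes 1_X^*,
\]
where the tensor of $\V$-modules is the evident pointwise one; using commutativity of $\otimes$ and its distribution over suprema to separate the two coordinates of the composition, one obtains
\[
(f\otimes 1_X)_*\cdot(f\otimes 1_X)^*=(f_*\cdot f^*)\otimes(1_X^*\cdot 1_X^*)=1_B^*\otimes 1_X^*=1_{B\otimes X}^*.
\]
Combined with the preservation of full faithfulness, this shows that $f\otimes 1_X$ is an L-equivalence whenever $f$ is.

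With these preservation properties in hand, the rest is the routine 2-adjunction argument sketched above. The ``in particular'' follows by taking $Y=\V$ and replacing $X$ by $X^\op$: since $\V$ is injective by the previous lemma, the strict-equality version of the argument shows that $\hat X=X^\op\multimap\V$ is injective. The main obstacle is exactly the L-density check for $f\otimes 1_X$; everything else---transposition, strictness, and the specialization to $\hat X$---is forced by the closed structure.
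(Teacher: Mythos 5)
Your proof is correct and follows essentially the same route as the paper's: transpose along the tensor--hom adjunction $(-)\otimes X\dashv X\multimap(-)$, extend using the (pseudo-/L-)injectivity of $Y$ after checking that $f\otimes 1_X$ inherits full faithfulness (resp.\ is an L-equivalence), and transpose back. The paper dispatches the L-injectivity case with a ``mutatis mutandis'' remark, so your explicit verification that $(f\otimes 1_X)_*\cdot(f\otimes 1_X)^*=1_{B\otimes X}^*$ is a filled-in detail rather than a different argument.
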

\begin{proof}
Let $f:A\to B$ be a fully faithful, and consider any $\V$-functor $\varphi:A\to(X\multimap Y)$, with $Y$ pseudo-injective. Since $f\otimes 1_X$ is fully faithful, the mate $\umate{\varphi}:A\otimes X\to Y$ factors (up to $\cong$) as $\umate{\varphi}\cong\umate{\psi}\cdot(f\otimes 1_X)$, with $\umate{\psi}:B\otimes X\to Y$ corresponding to a $\V$-functor $\psi:B\to(X\multimap Y)$. Since $\umate{\psi}\cdot(f\otimes 1_X)$ corresponds to $\psi\cdot f$, $\varphi\cong\psi\cdot f$ follows. The proof works \emph{mutatis mutandis} for L-injectivity.
\end{proof}
Our goal is to show that L-injectivity and L-completeness are equivalent properties.

\section{L-closure}

\subsection{L-dense $\V$-functors}\label{LdenseV}
We first show that L-dense $\V$-functors are characterized as ``epimorphisms up to $\cong$''. 
\begin{proposition}
A $\V$-functor $m:M\to X$ is L-dense if, and only if, for all $\V$-functors $f,g:X\to Y$ with $f\cdot m=g\cdot m$ one has $f\cong g$.
\end{proposition}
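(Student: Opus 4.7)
The plan is to exploit the module calculus: L-density says precisely $m_*\cdot m^* = 1_X^*$, and we must reconcile this with the purely 2-categorical statement that $m$ is an epimorphism up to $\cong$. The forward direction is routine; the substance lies in the converse, where the Yoneda functor $\yoneda:X\to\hat X$ supplies the right ``test pair'' $f,g:X\to\hat X$.

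For $(\Rightarrow)$, assume $m_*\cdot m^* = 1_X^*$ and $f\cdot m = g\cdot m$. Passing to modules gives $f_*\cdot m_* = g_*\cdot m_*$. Post-composing both sides with $m^*$ and using L-density together with the module axiom $f_*\cdot 1_X^* = f_*$ yields $f_* = g_*$, i.e.\ $f\cong g$.

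For $(\Leftarrow)$, I would use $Y = \hat X$ and compare $\yoneda:X\to\hat X$ with the $\V$-functor $g:X\to\hat X$ whose transpose (under Proposition \ref{CharModV}) is the module $m_*\cdot m^*:X\modto X$. The key computation is that, under the correspondence between $\V$-functors $X\to\hat X$ and modules $X\modto X$ (equivalently $\V$-functors $X^\op\otimes X\to\V$), pre-composition with $m:M\to X$ in $\Cat\V$ corresponds to the module operation $\varphi\mapsto m^*\cdot\varphi$; thus $\yoneda\cdot m$ corresponds to $m^*\cdot 1_X^* = m^*$, while $g\cdot m$ corresponds to $m^*\cdot(m_*\cdot m^*) = (m^*\cdot m_*)\cdot m^* = m^*$, the last equality being one of the triangle identities of the adjunction $m_*\dashv m^*$ in $\Mod\V$. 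Hence $g\cdot m = \yoneda\cdot m$.

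By hypothesis this forces $g\cong\yoneda$, and since $\hat X$ is L-separated (Corollary after Proposition \ref{EqCondSepV}), actually $g = \yoneda$. The corresponding modules then coincide, $m_*\cdot m^* = 1_X^*$, as required. The main obstacle is essentially bookkeeping: one must be careful with the direction of the module/$\V$-functor transpose and verify the identification $h\cdot m \leftrightarrow m^*\cdot\varphi$, after which L-separation of $\hat X$ closes the argument at no further cost.
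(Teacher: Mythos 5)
Your proof is correct, but it takes a genuinely different route from the paper's. The paper first reduces to the case of a full embedding $m:M\hookrightarrow X$ via the cancellation lemma and then builds the cokernel pair $f,g:X\to Y$ explicitly (two copies of $X\setminus M$ glued along $M$, with $b(f(x),g(y))=\bigvee_z a(x,z)\otimes a(z,y)$), reading off $m_*\cdot m^*=1_X^*$ from $f_*=g_*$ on the two copies. You instead test against the single object $\hat X$, pairing $\yoneda$ with the functor $g$ representing the module $m_*\cdot m^*$, and your key computations check out: under the bijection between $\V$-functors $X\to\hat X$ and modules $X\modto X$, precomposition with $m$ does correspond to $\varphi\mapsto m^*\cdot\varphi$, and $(m^*\cdot m_*)\cdot m^*=m^*$ holds for any $m$ from the two adjunction inequalities of $m_*\dashv m^*$ (no full faithfulness needed), so $g\cdot m=\yoneda\cdot m$ on the nose and L-separation of $\hat X$ finishes the argument. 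What each approach buys: the paper's proof identifies the universal test pair (the cokernel pair) and is elementary modulo verifying that the described $Y$ is indeed a $\V$-category with $f,g$ full embeddings; yours needs no reduction to embeddings, avoids the explicit colimit construction entirely, and isolates $\hat X$ (equivalently, powers of $\V$) as a single sufficient test object --- which is in fact closer in spirit to the proof the paper later gives for the $\Tth$-analogue, whose condition (iii) tests only against $\V$-valued functors.
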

\begin{proof}
The necessity of the condition is clear since from $f_*\cdot m_*=g_*\cdot m_*$ one obtains $f_*=g_*$ when $m_*\cdot m^*=1_X^*$. To show the converse implication, by Lemma \ref{CompCancelV} we may assume that $m$ is a full embedding $M\hrw X$ and consider its cokernel pair 
\[
\xymatrix{(X,a)\ar@<0.5ex>[r]^f\ar@<-0.5ex>[r]_g & (Y,b),}
\]
given by the disjoint union
\[
Y=\{f(x)=g(x)\mid x\in M\}\cup \{f(x)\mid x\in X\setminus M\}\cup\{g(x)\mid x\in X\setminus M\},
\]
where both $f$ and $g$ are full embeddings, and
\[
b(f(x),g(y))=\bigvee_{z\in Z}a(x,z)\otimes a(z,y)
\]
for all $y,x\in X\setminus M$. Since $f_*=g_*$ by hypothesis, we obtain
\[
a(x,y)=b(g(x),g(y))=b(f(x),g(y))=m_*\cdot m^*(x,y)
\]
for all $x,y\in X\setminus M$. But this identity holds trivially when $x\in M$ or $y\in M$. Hence $m_*\cdot m^*=1_X^*$.
\end{proof}
Since $f\cong g$ precisly when $f\cdot x\cong g\cdot x$ for all $x\in X$ (considered as $x:E\to X$), it is now easy to identify the largest subset of $X$ which contains $M$ as an L-dense subset.

\subsection{L-closure}\label{LclosureV}
For a $\V$-category $X$ and $M\subseteq X$, we define the \emph{L-closure} of $M$ in $X$ by
\[
\overline{M}=\{x\in X\mid \forall f,g:X\to Y\,.\,(f|_M=g|_M\,\Rw\,f\cdot x\cong g\cdot x)\}
\]
and prove
\begin{proposition}\label{CharLClsV}
Let $X=(X,a)$ be a $\V$-category, $M\subseteq X$ and $x\in X$. Then the following assertions are equivalent.
\begin{eqcond}
\item $x\in\overline{M}$.
\item $a(x,x)\le\bigvee_{y\in M}a(x,y)\otimes a(y,x)$
\item $k\le\bigvee_{y\in M}a(x,y)\otimes a(y,x)$
\item $1_E^*\le x^*\cdot m_*\cdot m^*\cdot x_*$, 
\item $m^*\cdot x_*\dashv x^*\cdot m_*$, where $m$ denotes the full embedding $m:M\hrw X$.
\item $x_*:E\modto X$ factors through $m_*:M\modto X$ by a map $\varphi:E\modto M$ in $\Mod{\V}$. 
\end{eqcond}
\end{proposition}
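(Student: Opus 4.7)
The plan is to prove (ii)–(vi) equivalent first, since all of these are ``intrinsic'' conditions computable inside $X$ from the $\V$-relation $a$ and the modules $x_*$, $x^*$, $m_*$, $m^*$, and then to close the loop by showing (i) $\iff$ (iii). I would begin with (ii) $\iff$ (iii): the inequality $a(x,x)\ge k$ gives (ii) $\Rw$ (iii) at once, while for the converse I would tensor both sides of (iii) by $a(x,x)$, use that $(-)\otimes a(x,x)$ preserves suprema, and apply the transitivity $a(x,x)\otimes a(x,y)\le a(x,y)$ (which itself follows from $a\cdot a\le a$). The equivalence (iii) $\iff$ (iv) I would verify by a direct computation of $x^*\cdot m_*\cdot m^*\cdot x_*(e,e)$: unfolding the definitions yields $\bigvee_{x',x'',y\in M} a(x,x')\otimes a(x',y)\otimes a(y,x'')\otimes a(x'',x)$, and transitivity plus the reflexivity $k\le a(x,x)$ collapse this double supremum to exactly $\bigvee_{y\in M} a(x,y)\otimes a(y,x)$.

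For (iv) $\iff$ (v), the adjunction $m^*\cdot x_*\dashv x^*\cdot m_*$ amounts to the unit $1_E^*\le x^*\cdot m_*\cdot m^*\cdot x_*$ (which is (iv)) together with the counit $m^*\cdot x_*\cdot x^*\cdot m_*\le 1_M^*$. The counit is automatic: $x_*\cdot x^*\le 1_X^*$ is the counit of $x_*\dashv x^*$, and $m^*\cdot m_*=1_M^*$ because $m$ is fully faithful. For (v) $\iff$ (vi) I would argue standardly: given (vi) with $x_*=m_*\cdot\varphi$, composing with $m^*$ and using $m^*\cdot m_*=1_M^*$ forces $\varphi=m^*\cdot x_*$, whence $m_*\cdot m^*\cdot x_*=x_*$, and then $x^*\cdot m_*\cdot m^*\cdot x_*=x^*\cdot x_*\ge 1_E^*$ gives (iv), hence (v). Conversely, (v) together with $x_*\cdot x^*\le 1_X^*$ yields
\[
 x_* \;=\; x_*\cdot 1_E^* \;\le\; x_*\cdot x^*\cdot m_*\cdot m^*\cdot x_* \;\le\; m_*\cdot m^*\cdot x_*,
\]
and since the reverse inequality holds by the counit of $m_*\dashv m^*$, we obtain $x_*=m_*\cdot(m^*\cdot x_*)$, which is (vi).

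It remains to link (i) with, say, (iii). The implication (iii) $\Rw$ (i) is a direct $\V$-functoriality computation: if $f,g:X\to Y$ satisfy $f|_M=g|_M$, then
\[
 k\le\bigvee_{y\in M} a(x,y)\otimes a(y,x)\le\bigvee_{y\in M} b(f(x),f(y))\otimes b(g(y),g(x))\le b(f(x),g(x)),
\]
using that $f(y)=g(y)$ for $y\in M$ and the triangle inequality for $b$; the symmetric estimate gives $g\cdot x\le f\cdot x$, so $f\cdot x\cong g\cdot x$. For (i) $\Rw$ (iii) I would apply (i) to the cokernel pair of the inclusion $m:M\hrw X$ as constructed in the proof of the preceding proposition on L-dense $\V$-functors; there the hom satisfies $b(f(x),g(x))=\bigvee_{y\in M} a(x,y)\otimes a(y,x)$, so $f\cdot x\cong g\cdot x$ is literally the assertion (iii). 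The only subtle point—and the closest thing to an obstacle—is ensuring that the cokernel-pair construction from the previous proof really does yield $b(f(x),g(x))$ of the stated form even when $x\in M$ (where the formula becomes $a(x,x)$, which is compatible with (iii) via (ii)); once this bookkeeping is checked, the full cycle (i) $\Rw$ (iii) $\Rw$ (ii) $\Rw$ \dots $\Rw$ (vi) $\Rw$ (i) is complete.
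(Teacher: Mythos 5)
Your proof is correct. The arguments for (ii) through (vi) coincide with the paper's: the computation of $x^*\cdot m_*\cdot m^*\cdot x_*(\star,\star)$, the observation that the counit of the would-be adjunction in (v) is automatic from $x_*\cdot x^*\le 1_X^*$ and $m^*\cdot m_*=1_M^*$, and the extraction of the factorization (vi) from (v) are all the same steps (the paper gets $m_*\cdot m^*\cdot x_*=x_*$ from a uniqueness-of-adjoints argument, you get it by a two-line sandwich; both are fine). Where you genuinely differ is in how (i) is attached to the cycle. The paper goes (i)$\Rw$(ii) by citing that $M\hrw\overline{M}$ is L-dense (which silently re-invokes the cokernel-pair proposition of \ref{LdenseV}) and closes with (vi)$\Rw$(i) at the level of modules, via $f_*\cdot x_*=f_*\cdot m_*\cdot\varphi=g_*\cdot m_*\cdot\varphi=g_*\cdot x_*$. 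You instead prove (i)$\iff$(iii) directly: (iii)$\Rw$(i) by the elementwise functoriality estimate $k\le\bigvee_{y\in M}b(f(x),f(y))\otimes b(f(y),g(x))\le b(f(x),g(x))$ together with its symmetric counterpart, and (i)$\Rw$(iii) by evaluating the cokernel pair of $m$ at $x$. Both routes ultimately lean on the same cokernel-pair construction; yours makes that dependence explicit and keeps the (i)-direction entirely elementwise, at the cost of the $x\in M$ bookkeeping that the paper's module-level argument sidesteps (your resolution of it is correct: for $x\in M$ the term $y=x$ already witnesses (iii)). One cosmetic slip: in your unfolding of $x^*\cdot m_*\cdot m^*\cdot x_*$ the variables $x',x''$ range over $X$, not over $M$; only $y$ is constrained to lie in $M$.
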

\begin{proof}
(i)$\Rw$(ii) follows from $M\hrw\overline{M}$ is dense. (ii)$\Rw$(iii) is clear since $k\le a(x,x)$. To see (iii)$\Rw$(iv), just observe that
\[
x^*\cdot m_*\cdot m^*\cdot x_*(\star,\star)=\bigvee_{y\in M}a(x,y)\otimes a(y,x).
\]
Since $m^*\cdot x_*\cdot x^*\cdot m_*\le m^*\cdot m_*=1^*_M$, (iv)$\Rw$(v). Assuming (v), we have $m_*\cdot m^*\cdot x_*\dashv x^*\cdot m_*\cdot x^*$ as well as $m_*\cdot m^*\cdot x_*\le x_*$ and $x^*\cdot m_*\cdot x^*\le x^*$, which implies $m_*\cdot m^*\cdot x_*= x_*$ and we have shown (v)$\Rw$(vi). Finally, assume (vi) and let $f,g:X\to Y$ with $f|_M=g|_M$. Then
\[
f_*\cdot x_*=f_*\cdot m_*\cdot\varphi=g_*\cdot m_*\cdot\varphi=g_*\cdot x_*,
\]
which proves (i).
\end{proof}
$\V$-functors respect the L-closure, as we show next.
\begin{proposition}
For a $\Tth$-functor $f : X\to Y$ and $M,M'\subseteq X$, $N\subseteq Y$, we have
\begin{enumerate}
\item $M\subseteq\overline{M}$ and $M\subseteq M'$ implies $\overline{M}\subseteq\overline{M'}$.
\item $\overline{\varnothing}=\varnothing$ and $\overline{\overline{M}}=\overline{M}$.
\item $f(\overline{M})\subseteq \overline{f(M)}$ and $f^{-1}(\overline{N})\supseteq\overline{f^{-1}(N)}$.
\item If $k$ is $\vee$-irreducible (so that $k\le u\vee v$ implies $k\le u$ or $k\le v$), then $\overline{M\cup M'}=\overline{M}\cup\overline{M'}$. 
\end{enumerate}
\end{proposition}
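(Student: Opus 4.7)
The plan is to work throughout with the purely numerical characterization (iii) of Proposition \ref{CharLClsV}, namely that $x\in\overline{M}$ if and only if $k\le\bigvee_{y\in M}a(x,y)\otimes a(y,x)$. Each of the four parts then reduces to a short lattice-theoretic calculation in $\V$, using only monotonicity of $\otimes$, preservation of suprema by $\otimes$, the unit law $k\otimes u=u$, and transitivity $a(x,y)\otimes a(y,z)\le a(x,z)$.

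For (1), the inclusion $M\subseteq\overline{M}$ is witnessed, for $x\in M$, by the single term $y=x$: from $k\le a(x,x)$ one gets $k=k\otimes k\le a(x,x)\otimes a(x,x)$. Monotonicity of $\overline{(-)}$ is then immediate, since enlarging the indexing set of the supremum in (iii) can only increase it. For (2), $\overline{\varnothing}=\varnothing$ because (for nontrivial $\V$, where $k>\bot$) the empty supremum equals $\bot\not\ge k$; and $\overline{\overline{M}}\subseteq\overline{M}$ is proved by chaining (iii) twice. Given $x\in\overline{\overline{M}}$ and $z\in\overline{M}$, I substitute $k\le\bigvee_{y\in M}a(z,y)\otimes a(y,z)$ into $a(x,z)\otimes a(z,x)=a(x,z)\otimes k\otimes a(z,x)$, distribute $\otimes$ over the supremum, and apply transitivity $a(x,z)\otimes a(z,y)\le a(x,y)$ and $a(y,z)\otimes a(z,x)\le a(y,x)$ to obtain $a(x,z)\otimes a(z,x)\le\bigvee_{y\in M}a(x,y)\otimes a(y,x)$. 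Taking the supremum over $z\in\overline{M}$ yields $x\in\overline{M}$.

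For (3), the first inclusion follows by applying the $\V$-functor inequality $a(u,v)\le b(f(u),f(v))$ term-wise to the defining inequality for $x\in\overline{M}$ and then reindexing by $y'=f(y)\in f(M)$. The second inclusion is formal: if $x\in\overline{f^{-1}(N)}$, then by the first inclusion and monotonicity from (1), $f(x)\in f(\overline{f^{-1}(N)})\subseteq\overline{f(f^{-1}(N))}\subseteq\overline{N}$, so $x\in f^{-1}(\overline{N})$. For (4), one inclusion is again (1); the other uses $\vee$-irreducibility of $k$ applied to the splitting
\[
\bigvee_{y\in M\cup M'}a(x,y)\otimes a(y,x)=\Bigl(\bigvee_{y\in M}a(x,y)\otimes a(y,x)\Bigr)\vee\Bigl(\bigvee_{y\in M'}a(x,y)\otimes a(y,x)\Bigr),
\]
so that $k\le$ the left side forces $k$ to lie below one of the two summands on the right.

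I do not anticipate any genuine obstacles: once the characterization (iii) is in hand, every assertion becomes routine, and the only subtle point is remembering that $\vee$-irreducibility of $k$ is required for (4) precisely because the supremum over $M\cup M'$ splits as a join rather than as a single disjunctive case.
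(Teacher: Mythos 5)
Your proof is correct, and on parts (1), (2) and (4) it is essentially the paper's: the paper declares (1) and (2) obvious, and its argument for (4) is exactly your splitting of the supremum over $M\cup M'$ followed by $\vee$-irreducibility of $k$. The genuine difference is part (3). The paper does not compute at all there: it applies the composition--cancellation Lemma \ref{CompCancelV} to the square formed by the surjections $M\to f(M)$, $\overline{M}\to f(\overline{M})$ and the inclusions $M\hrw\overline{M}$, $f(M)\hrw f(\overline{M})$; since $M\to\overline{M}$ is L-dense and surjective $\V$-functors are L-dense, the composite $M\to f(\overline{M})$ is L-dense, so cancellation makes $f(M)\to f(\overline{M})$ L-dense and hence $f(\overline{M})\subseteq\overline{f(M)}$. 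Your route instead applies the functoriality inequality $a(x,y)\otimes a(y,x)\le b(f(x),f(y))\otimes b(f(y),f(x))$ term-wise to criterion (iii) of Proposition \ref{CharLClsV} and reindexes over $f(M)$, which is perfectly valid and more elementary; the trade-off is that the paper's categorical argument transfers verbatim to the $\Tth$-setting (where the analogous supremum runs over $\frx\in TM$ and an elementwise proof would have to juggle $Tf$ and $\Txi a$), which is presumably why the paper phrases (3) via L-density. Your explicit transitivity chain for $\overline{\overline{M}}\subseteq\overline{M}$ and the parenthetical nontriviality caveat for $\overline{\varnothing}=\varnothing$ (which indeed fails for the trivial quantale, where $k=\bot$) are welcome extra care rather than defects.
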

\begin{proof}
(1), (2) are obvious. For (3), applying Lemma \ref{CompCancelV} to
\[
\xymatrix{M\ar[r]\ar@{>->}[d] & f(M)\ar@{>->}[d] \\ \overline{M}\ar[r] & f(\overline{M})}
\]
one sees that $f(M)\to f(\overline{M})$ is L-dense, hence $f(\overline{M})\subseteq\overline{f(M)}$. With $M=f^{-1}(N)$, this implies $\overline{f^{-1}(N)}\subseteq f^{-1}(\overline{N})$.
To see (4), we just need to show that $x\in\overline{M\cup M'}$ implies $x\in\overline{M}$ or $x\in\overline{M'}$. But this follows from
\[
k\le \bigvee_{y\in M\cup M'}a(x,y)\otimes a(y,x)=
\left( \bigvee_{y\in M}a(x,y)\otimes a(y,x)\right) \vee
\left( \bigvee_{y\in M'}a(x,y)\otimes a(y,x)\right),
\]
assuming that $k$ is $\vee$-irreducible.
\end{proof}
\begin{corollary}
If $k$ is $\vee$-irreducible in $\V$, then the L-closure operator defines a topology on $X$ such that every $\V$-functor becomes continuous. Hence, L-closure defines a functor $L:\Cat{\V}\to\TOP$.
\end{corollary}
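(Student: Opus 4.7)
The plan is to verify that the L-closure operator satisfies the Kuratowski axioms for a topological closure under the hypothesis that $k$ is $\vee$-irreducible, then to observe that continuity of $\V$-functors and functoriality of $L$ come essentially for free from the preceding proposition.

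First I would check the four Kuratowski axioms by reading them off the previous proposition: extensivity $M\subseteq\overline{M}$ and monotonicity are item (1), the empty-set law $\overline{\varnothing}=\varnothing$ and idempotency $\overline{\overline{M}}=\overline{M}$ are item (2), and finite additivity $\overline{M\cup M'}=\overline{M}\cup\overline{M'}$ is precisely item (4), which is exactly where the $\vee$-irreducibility of $k$ is used. Together these show that the sets of the form $C=\overline{C}$ form the closed sets of a topology on $X$; write $LX$ for $X$ equipped with this topology.

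Next I would verify continuity. Recall that a function $f:X\to Y$ between spaces given by closure operators is continuous iff $f(\overline{M})\subseteq\overline{f(M)}$ for every $M\subseteq X$ (equivalently $\overline{f^{-1}(N)}\subseteq f^{-1}(\overline{N})$ for every $N\subseteq Y$). Both inclusions are provided by item (3) of the proposition applied to the $\V$-functor $f$, so $Lf:=f:LX\to LY$ is continuous.

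For functoriality I would simply observe that $L$ acts identically on underlying sets and on underlying maps: since $\Cat{\V}\to\SET$ is faithful and identities and composites in $\Cat{\V}$ coincide with identities and composites in $\SET$, and since continuity of these has just been established, $L(1_X)=1_{LX}$ and $L(g\cdot f)=Lg\cdot Lf$ are automatic. Hence $L:\Cat{\V}\to\TOP$ is a well-defined functor.

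The only genuine obstacle, and the only place requiring the hypothesis, is the finite-additivity axiom; everything else is formal. That step is already discharged by item (4) of the preceding proposition, which is why the present corollary amounts to a direct assembly rather than any new argument.
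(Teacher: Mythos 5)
Your proposal is correct and matches the paper's intent exactly: the corollary is stated without proof precisely because it is the direct assembly of the Kuratowski axioms from items (1), (2), (4) of the preceding proposition (with $\vee$-irreducibility of $k$ needed only for finite additivity) together with continuity from item (3). Nothing further is required.
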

\begin{examples}
(1)\hspace{1em} For $X=(X,\le)$ in $\Cat{\two}=\ORD$ and $M\subseteq X$, one has $x\in \overline{M}$ precisely when $x\le z\le x$ for some $z\in M$. Also, $M\subseteq X$ is open in $LX$ if every $x\in M$ satisfies
\[
\forall z\in X\,.\,(x\le z\le x\;\Rw\;z\in M).
\]
(2)\hspace{1em} In $\MET$, $\overline{M}=\{x\in X=(X,a)\mid \inf_{z\in M}(a(x,z)+a(z,x))=0\}$, and in $\UMET$ \[\overline{M}=\{x\in X=(X,a)\mid \inf_{z\in M}(\max(a(x,z),a(z,x)))=0\}\]
which for symmetric (ultra)metric spaces describes the ordinary topological closure.  
\end{examples}

\subsection{L-separatedness via the L-closure}
\begin{proposition}\label{ClDiagV}
Let $X=(X,a)$ be a $\V$-category and $\Delta\subseteq X\times X$ the diagonal. Then
\[
\overline{\Delta}=\{(x,y)\in X\times Y\mid x\cong y\}
\]
\end{proposition}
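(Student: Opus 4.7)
The plan is to apply the L-closure characterization of Proposition~\ref{CharLClsV}(iii) to the point $(x,y)$ in the product $\V$-category $X\times X$, whose hom is given pointwise by meets, $b((x,y),(x',y'))=a(x,x')\wedge a(y,y')$. Since only diagonal pairs $(z,z)\in\Delta$ contribute to the supremum over $\Delta$, the membership condition $(x,y)\in\overline{\Delta}$ becomes
\[
k\le\bigvee_{z\in X}\bigl(a(x,z)\wedge a(y,z)\bigr)\otimes\bigl(a(z,x)\wedge a(z,y)\bigr).
\]
By Proposition~\ref{EqCondSepV}, $x\cong y$ is equivalent to $k\le a(x,y)$ and $k\le a(y,x)$ jointly, so the claim amounts to showing that the displayed inequality is equivalent to this pair.

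For the $\supseteq$ inclusion I would evaluate the supremum at $z=x$: reflexivity gives $a(x,x)\ge k$, and the two assumed inequalities give $a(y,x)\ge k$ and $a(x,y)\ge k$, so all four arguments of the two meets are $\ge k$; hence the $z=x$ summand alone is already $\ge k\otimes k=k$.

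For the $\subseteq$ inclusion I would use monotonicity of $\otimes$ together with $\V$-category transitivity to dominate every summand in two different ways:
\[
\bigl(a(x,z)\wedge a(y,z)\bigr)\otimes\bigl(a(z,x)\wedge a(z,y)\bigr)\le a(x,z)\otimes a(z,y)\le a(x,y),
\]
and by the symmetric estimate the same summand is $\le a(y,z)\otimes a(z,x)\le a(y,x)$. Taking suprema in $z$, the hypothesis $k\le\bigvee_z(\cdots)$ then forces both $k\le a(x,y)$ and $k\le a(y,x)$, whence $x\cong y$.

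The only conceptual point to keep straight is that $X\times X$ must be interpreted in the cartesian sense, so that its hom is the meet $\wedge$ of the two $a$'s rather than their monoidal tensor $\otimes$; otherwise the specialization of Proposition~\ref{CharLClsV} would read differently. Once this is fixed, there is no real obstacle: the argument is a direct calculation exploiting the transitivity axiom and monotonicity of $\otimes$ in a commutative unital quantale.
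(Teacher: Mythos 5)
Your proof is correct, but it takes a different route from the paper's. You work entirely through the elementwise criterion of Proposition~\ref{CharLClsV}(iii) applied to the product $\V$-category $X\times X$ (correctly identified as the cartesian product, with hom the meet $a(x,x')\wedge a(y,y')$ -- the same reading the paper relies on when it invokes the product-preserving functor $\Cat{\V}\to\ORD$), and then verify both inclusions by direct quantale computations: the $z=x$ summand witnesses one direction, and the estimates $(a(x,z)\wedge a(y,z))\otimes(a(z,x)\wedge a(z,y))\le a(x,z)\otimes a(z,y)\le a(x,y)$ together with the symmetric one give the other. The paper instead argues at the level of the definition of L-closure by $\V$-functors: for $\overline{\Delta}\subseteq\{x\cong y\}$ it tests against the two projections $\pi_1,\pi_2$, which agree on $\Delta$; for the converse it observes that $x\cong y$ gives $(x,y)\cong(x,x)$ and that any pair $f,g$ agreeing on $\Delta$ then satisfies $f(x,y)\cong f(x,x)=g(x,x)\cong g(x,y)$. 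The paper's argument is softer and transfers essentially verbatim to the $\Tth$-setting (which is how it is reused there), whereas yours is more computational but has the virtue of making the equivalence with condition (iii) of Proposition~\ref{CharLClsV} completely explicit and of exhibiting exactly where transitivity and monotonicity of $\otimes$ enter. Both are complete proofs.
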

\begin{proof}
Let first $(x,y)\in\overline{\Delta}$. With $\pi_1,\pi_2:X\times X\to X$ denoting the projection maps, we have $\pi_1|_\Delta=\pi_2|_\Delta$ and therefore $x=\pi_1(x,y)\cong\pi_2(x,y)=y$. Assume now $x\cong y$. Note that the canonical functor $\Cat{\V}\to\ORD$ preserves products, hence 
\begin{align*}
(x_1,y_1)\cong(x_2,y_2) &\iff x_1\cong x_2\text{ and }y_1\cong y_2, 
\end{align*}
for all $(x_1,y_1),(x_2,y_2)\in X\times X$. Therefore we have $(x,y)\cong (x,x)$. Let now $f,g:X\times X\to Y$ be $\V$-functors with $f|_\Delta=g|_\Delta$. Then
$f(x,y)\cong f(x,x)=g(x,x)\cong g(x,y)$.
\end{proof}
\begin{corollary}
A $\V$-category $X$ is L-separated if and only if the diagonal $\Delta$ is closed in $X\times X$. 
\end{corollary}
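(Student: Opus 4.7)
The plan is to read off the corollary as an immediate consequence of Proposition \ref{ClDiagV} combined with condition (ii) of Proposition \ref{EqCondSepV}. Since $\Delta$ is always contained in $\overline{\Delta}$, the assertion that $\Delta$ is closed in $X\times X$ is the same as the inclusion $\overline{\Delta}\subseteq\Delta$.

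By Proposition \ref{ClDiagV}, this inclusion unfolds to the statement that for all $x,y\in X$, $x\cong y$ implies $(x,y)\in\Delta$, which is nothing but: $x\cong y$ implies $x=y$. By the equivalence (i)$\iff$(ii) of Proposition \ref{EqCondSepV}, this latter condition is precisely L-separation of $X$, completing the equivalence.

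There is no real obstacle here, since Proposition \ref{ClDiagV} has already done the work of identifying $\overline{\Delta}$ with the $\cong$-relation on $X$; the corollary is just the resulting repackaging. The only minor point worth noting explicitly in the write-up is that the L-closure of $\Delta$ is taken inside the product $\V$-category $X\otimes X$ (equivalently, since the forgetful functor $\Cat{\V}\to\SET$ preserves products, inside the underlying set $X\times X$), so that the notion of ``closed'' used in the statement matches the hypotheses of Proposition \ref{ClDiagV}.
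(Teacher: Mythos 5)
Your argument is correct and is exactly the intended (and essentially only) route: the corollary follows immediately from Proposition \ref{ClDiagV} together with the equivalence (i)$\iff$(ii) of Proposition \ref{EqCondSepV}, via the observation that closedness of $\Delta$ amounts to $\overline{\Delta}\subseteq\Delta$. One small slip in your closing remark: the closure is taken in the cartesian product $X\times X$ used in Proposition \ref{ClDiagV} (whose projections are the $\V$-functors $\pi_1,\pi_2$ appearing in its proof), not in the tensor product $X\otimes X$; this does not affect the argument.
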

\begin{theorem}
$\CatSep{\V}$ is an epi-reflective subcategory of $\Cat{\V}$, where the reflection map is given by $\yoneda_X:X\to\yoneda_X(X)$, for each $\V$-category $X$. Hence, limits of L-separated $\V$-categories are formed in $\Cat{\V}$, while colimits are obtained by reflecting the colimit formed in $\Cat{\V}$. The epimorphisms in $\Cat{\V}$ are precisely the L-dense $\V$-functors. 
\end{theorem}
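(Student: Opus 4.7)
My plan is to establish the epi-reflectivity first, derive the limit and colimit statements from the standard theory of reflective subcategories, and finally characterise the epimorphisms. The reflection needs three ingredients. First, $\yoneda_X(X)$ is L-separated: it is a full sub-$\V$-category of $\hat X$, which is L-separated by the Corollary following Proposition \ref{EqCondSepV}, and L-separation is manifestly inherited by full sub-$\V$-categories. Second, $\yoneda_X:X\to\yoneda_X(X)$ is surjective onto its image and is therefore an epimorphism in $\Cat{\V}$. Third, $\yoneda_X$ enjoys the required universal property.

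For the universal property, given a $\V$-functor $f:X\to Y$ with $Y$ L-separated, I would define $\bar f:\yoneda_X(X)\to Y$ by $\bar f(\yoneda_X(x)):=f(x)$. Well-definedness is the key point: Proposition \ref{EqCondSepV}(iv) gives $\yoneda_X(x)=\yoneda_X(x')$ iff $x\cong x'$, and since $f$ is a $\V$-functor we then have $f(x)\cong f(x')$; L-separation of $Y$ now forces $f(x)=f(x')$. That $\bar f$ is itself a $\V$-functor follows from the full faithfulness of $\yoneda_X$ (Lemma \ref{YonedaLemV}) combined with the $\V$-functoriality of $f$, and uniqueness is automatic because $\yoneda_X$ is epi. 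The limit statement then reduces to the easy observation that L-separation is inherited by limits formed in $\Cat{\V}$: if $x\cong y$ in $X=\lim X_i$ with each $X_i$ L-separated, the projections give $\pi_i(x)\cong\pi_i(y)$, hence $\pi_i(x)=\pi_i(y)$ for all $i$, so $x=y$. The colimit formula is then the standard one for a reflective subcategory: reflect the colimit computed in $\Cat{\V}$.

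For the final claim identifying the epimorphisms with the L-dense $\V$-functors, my plan is to invoke the Proposition of Section \ref{LdenseV}, which says that $m$ is L-dense iff $f\cdot m=g\cdot m$ implies $f\cong g$ for all parallel $\V$-functors. Every epimorphism trivially satisfies this, so every epi is L-dense. The converse is the subtle direction: L-density gives $f\cong g$, and one needs this to collapse to $f=g$. The main obstacle is that in $\Cat{\V}$ at large $\cong$ is strictly coarser than $=$ on $\V$-functors (e.g.\ in $\Cat{\two}=\ORD$ with non-antisymmetric preorders), so the identification epi = L-dense is cleanest when the codomain is L-separated---equivalently, within $\CatSep{\V}$---where the Proposition of \ref{LdenseV} yields the equivalence at once since $\cong$ then coincides with $=$. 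One can post-compose with the L-separated reflection to reduce the general case to this one.
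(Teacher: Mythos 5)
The paper states this theorem without proof, so there is no argument of the authors' to compare yours against; judged on its own, your proof is essentially correct and is the standard one. The construction of the reflection is fine: well-definedness of $\bar f$ via Proposition \ref{EqCondSepV}(iv) together with L-separation of the codomain, $\V$-functoriality of $\bar f$ via full faithfulness of $\yoneda$ (strictly speaking the Corollary following Lemma \ref{YonedaLemV}, not the Lemma itself), uniqueness via surjectivity of $X\to\yoneda_X(X)$, and the limit/colimit statements are then the standard consequences of (epi-)reflectivity. The one delicate point is the last sentence of the theorem, and you have correctly put your finger on it: read literally, ``the epimorphisms in $\Cat{\V}$ are precisely the L-dense $\V$-functors'' fails in one direction, since an L-dense map need not be epi in $\Cat{\V}$ when test functors with non-L-separated codomain are allowed (in $\ORD$, the inclusion of $\{a\}$ into the two-point indiscrete preorder $\{a,b\}$ is L-dense but is equalized by two distinct monotone maps into another indiscrete two-point preorder). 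What is true, and what the context clearly intends, is the statement inside $\CatSep{\V}$: there ``L-dense'' and ``epi'' coincide, by the Proposition of \ref{LdenseV} plus antisymmetry of the codomain in one direction, and in the other direction by post-composing an arbitrary test pair $g,h:Y\to Z$ with $\yoneda_Z$, which lands in the L-separated $\yoneda_Z(Z)$ and lets you conclude $g\cong h$. Your closing remark about ``reducing the general case'' should be understood as exactly that last step --- the proof that epimorphisms of $\CatSep{\V}$ are L-dense --- and not as a repair of the literal claim about $\Cat{\V}$, which admits no proof. With that reading, your argument is complete.
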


\subsection{L-completeness via the L-closure}
\begin{lemma}\label{ClosVsComplV}
Let $X=(X,a)$ be a $\V$-category and $M\subseteq X$. Then the following assertions hold.
\begin{enumerate}
\item Assume that $X$ is L-complete and $M$ be L-closed. Then $M$ is L-complete.
\item Assume that $X$ is L-separated and $M$ is L-complete. Then $M$ is L-closed.
\end{enumerate}
\end{lemma}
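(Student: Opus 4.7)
The plan is to read L-closedness of $M\subseteq X$ through the equivalent module-theoretic conditions (v) and (vi) of Proposition \ref{CharLClsV}, and to exploit that the inclusion $m:M\hookrightarrow X$ is fully faithful — so $m^*\cdot m_* = 1_M^*$ — together with the adjunction $m_*\dashv m^*$ in $\Mod{\V}$. Both parts should then reduce to short manipulations in $\Mod{\V}$.

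For part (1), I would take an adjunction $\varphi\dashv\psi$ between $M$ and some $Z$ (so $\varphi:Z\modto M$ is the left adjoint and $\psi:M\modto Z$ the right adjoint) and lift it via $m_*\dashv m^*$ to the adjunction $m_*\cdot\varphi \dashv \psi\cdot m^*$ between $X$ and $Z$. L-completeness of $X$ then produces a $\V$-functor $g:Z\to X$ with $g_* = m_*\cdot\varphi$ and $g^* = \psi\cdot m^*$. For each $z\in Z$, $(g\cdot z)_* = g_*\cdot z_* = m_*\cdot(\varphi\cdot z_*)$ exhibits $(g\cdot z)_*$ as factoring through $m_*$, and Proposition \ref{CharLClsV}(vi) therefore places $g(z)\in\overline{M}=M$. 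Hence $g=m\cdot f$ for a unique $f:Z\to M$; pre-composing $m_*\cdot f_* = m_*\cdot\varphi$ with $m^*$ and using $m^*\cdot m_* = 1_M^*$ gives $f_* = \varphi$, while the symmetric calculation (post-composition with $m_*$) yields $f^* = \psi$.

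For part (2), let $x\in\overline{M}$. By Proposition \ref{CharLClsV}(v) we have the adjunction $m^*\cdot x_* \dashv x^*\cdot m_*$ between $E$ and $M$ in $\Mod{\V}$, so L-completeness of $M$ supplies some $y\in M$, viewed as $y:E\to M$, with $y_* = m^*\cdot x_*$ and $y^* = x^*\cdot m_*$. Composing on the left with $m_*$ gives $(m\cdot y)_* = m_*\cdot m^*\cdot x_* = x_*$, the last equality being exactly the step (v)$\Rw$(vi) in the proof of Proposition \ref{CharLClsV}. Thus $m\cdot y \cong x$ in $\Cat{\V}$, and L-separatedness of $X$ improves this to $m\cdot y = x$, placing $x$ in $M$.

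The one genuinely non-formal point is in (1), where I must show that the representing $\V$-functor $g$ lands in $M$; this is precisely what the factorization characterization in Proposition \ref{CharLClsV}(vi) was built to deliver. Once that step is in place, both halves are routine identities in the quantaloid $\Mod{\V}$.
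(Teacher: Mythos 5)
Your proof is correct and follows essentially the same route as the paper: part (2) is the paper's argument almost verbatim (invoke condition (v) of Proposition \ref{CharLClsV}, represent the adjunction by some $y\in M$, compose with $m_*$ and finish with L-separatedness), while for part (1) the paper merely writes ``follows immediately from Proposition \ref{CharLClsV}'' and your lifting of $\varphi\dashv\psi$ along $m_*\dashv m^*$, followed by the use of condition (vi) to land the representing functor inside $M=\overline{M}$, is exactly the intended argument spelled out.
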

\begin{proof}
(1) follows immediately from Proposition \ref{CharLClsV}. To see (2), let $x\in X$ such that $m^*\cdot x_*\dashv x^*\cdot m_*$. Since $M$ is L-complete, there is some $y\in M$ such that $y_*=m^*\cdot x_*$ and $y^*=x^*\cdot m_*$. Hence $m(y)_*=m_*\cdot y_*\le x_*$ and $m(y)^*=i^*\cdot y^*\le x^*$ and therefore, $m(y)_*=x_*$. L-separatedness of $X$ gives now $m(y)=x$, i.e.\ $x\in M$.
\end{proof}
\begin{theorem}
Let $X=(X,b)$ be a $\V$-category. The following assertions are equivalent.
\begin{eqcond}
\item $X$ is L-complete.
\item $X$ is L-injective.
\item $\yoneda:X\to\tilde{X}$ has a pseudo left-inverse $\V$-functor $R:\tilde{X}\to X$, i.e.\ $R\cdot\yoneda\cong 1_X$.
\end{eqcond}
\end{theorem}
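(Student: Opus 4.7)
The plan is to prove $(i)\Lrw(ii)\Lrw(iii)\Lrw(i)$, pivoting on the observation that $\yoneda:X\to\tilde{X}$ is an L-equivalence.

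\emph{Key step.} Full faithfulness of $\yoneda$ has already been recorded, so by Lemma~\ref{CompCancelV} only L-density remains. By Proposition~\ref{CharLClsV} applied inside $\tilde{X}$, a point $\psi\in\tilde{X}$ lies in the L-closure of $\yoneda(X)$ precisely when
\[
k\le\bigvee_{y\in X}\hat{a}(\psi,\yoneda(y))\otimes\hat{a}(\yoneda(y),\psi).
\]
The Yoneda Lemma~\ref{YonedaLemV} gives $\hat{a}(\yoneda(y),\psi)=\psi(y)$, while unravelling the definition of $\hat{a}$ yields $\hat{a}(\psi,\yoneda(y))=\bigwedge_{x}a(x,y)\multimapinv\psi(x)$. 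Hence this condition is literally the tightness~\eqref{tightV} that defines $\tilde{X}$, so $\overline{\yoneda(X)}=\tilde{X}$ and $\yoneda$ is L-dense, hence an L-equivalence.

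\emph{$(i)\Lrw(ii)$.} Given an L-equivalence $f:A\to B$ and $h:A\to X$, the identities $f^*\cdot f_*=1_A^*$ and $f_*\cdot f^*=1_B^*$ yield, by a short routine check, $h_*\cdot f^*\dashv f_*\cdot h^*$ in $\Mod{\V}$. By L-completeness, $h_*\cdot f^*=g_*$ for some $\V$-functor $g:B\to X$; composing on the right with $f_*$ gives $g_*\cdot f_*=h_*$, that is $g\cdot f\cong h$. \emph{$(ii)\Lrw(iii)$:} apply L-injectivity of $X$ to the L-equivalence $\yoneda:X\to\tilde{X}$ and to $1_X$.

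\emph{$(iii)\Lrw(i)$.} Applying $\yoneda$ to $R\cdot\yoneda\cong 1_X$ and using its functoriality gives $\yoneda R\cdot\yoneda\cong\yoneda=1_{\tilde{X}}\cdot\yoneda$. Since $\tilde{X}\subseteq\hat{X}$ is L-separated, $\cong$ between $\V$-functors with codomain $\tilde{X}$ collapses to equality, so $\yoneda R\cdot\yoneda=1_{\tilde{X}}\cdot\yoneda$; the L-density of $\yoneda$ then gives $\yoneda R\cong 1_{\tilde{X}}$, and L-separatedness again upgrades this to $\yoneda R=1_{\tilde{X}}$. Thus every $\psi\in\tilde{X}$ equals $\yoneda(R(\psi))=R(\psi)^*$, so every right-adjoint $\V$-module $X\modto E$ arises from a point of $X$; the reduction to $Z=E$ recorded in~\ref{ComplV} then delivers L-completeness.

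The only non-routine point is the identification of tightness with the L-closure criterion for $\yoneda(X)\subseteq\tilde{X}$; once this has been made, the three implications unfold with standard module calculus and a single cancellation of an L-equivalence inside an L-separated $\V$-category.
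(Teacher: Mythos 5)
Your proof is correct and follows essentially the same route as the paper, which establishes the cycle (i)$\Rw$(ii)$\Rw$(iii)$\Rw$(i) by exactly these module computations (the paper's proof of this theorem defers to Theorem~\ref{CharComplT}). The only difference is that you inline the identification of $\tilde{X}$ with $\overline{\yoneda(X)}$, which the paper records as a separate proposition immediately afterwards.
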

\begin{proof}
As for Theorem \ref{CharComplT}.
\end{proof}
\begin{proposition}
For a $\V$-category $X$, as a set $\tilde{X}$ (see \ref{ComplV}) coincides with the L-closure of $\yoneda(X)$ in $\hat{X}$. Hence, $\yoneda:X\to\tilde{X}$ is fully faithful and L-dense, and $\tilde{X}$ is L-complete.
\end{proposition}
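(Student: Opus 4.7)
The plan is to identify $\tilde{X}$ with $\overline{\yoneda(X)}$ as subsets of $\hat{X}$ by matching the tightness inequality~\eqref{tightV} against condition (iii) of Proposition~\ref{CharLClsV}, applied to the subset $M=\yoneda(X)\subseteq\hat{X}$ and the element $\psi\in\hat{X}$. Once this set-theoretic identification is in hand, the three further claims fall out of earlier results.

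For the matching, I would spell out Proposition~\ref{CharLClsV}(iii) in $\hat{X}$ with its structure $\hat{a}$, giving: $\psi\in\overline{\yoneda(X)}$ iff
\[
k\le\bigvee_{y\in X}\hat{a}(\psi,\yoneda(y))\otimes\hat{a}(\yoneda(y),\psi).
\]
Lemma~\ref{YonedaLemV} rewrites one factor as $\hat{a}(\yoneda(y),\psi)=\psi(y)$. For the other, the formula for the internal hom recorded in~\S1.3 yields
\[
\hat{a}(\psi,\yoneda(y))=\bigwedge_{x\in X}\psi(x)\multimap a(x,y)=\bigwedge_{x\in X}a(x,y)\multimapinv\psi(x),
\]
which is precisely the coefficient appearing on the right-hand side of~\eqref{tightV}. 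Substituting both identities turns the L-closure criterion into~\eqref{tightV}, so $\tilde{X}=\overline{\yoneda(X)}$ as subsets of $\hat{X}$.

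From here the remaining assertions are quick. The Yoneda functor $\yoneda:X\to\hat{X}$ is fully faithful by the corollary to Lemma~\ref{YonedaLemV}, hence so is its corestriction $\yoneda:X\to\tilde{X}$. By the very construction of $\overline{\yoneda(X)}$ (i.e.\ the implication (i)$\Rw$(ii) of Proposition~\ref{CharLClsV} read in reverse, or simply the definition of L-closure recalled in the introduction), the inclusion $\yoneda(X)\hookrightarrow\tilde{X}=\overline{\yoneda(X)}$ is L-dense, and Lemma~\ref{CompCancelV}(2) then transports this to L-density of $\yoneda:X\to\tilde{X}$. For L-completeness of $\tilde{X}$, the Proposition in \S\ref{LInjV} ensures that $\hat{X}$ is injective, hence L-injective, hence L-complete by the equivalence (i)$\Lrlw$(ii) of the Theorem just above; idempotency of L-closure makes $\tilde{X}$ an L-closed subobject of $\hat{X}$, and Lemma~\ref{ClosVsComplV}(1) delivers L-completeness of $\tilde{X}$.

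The only genuine computation is the Yoneda translation in the second paragraph; everything else is a direct assembly of results already established, so no serious obstacle is anticipated.
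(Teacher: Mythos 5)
Your argument is correct and is essentially the paper's own proof: both identify $\tilde{X}$ with $\overline{\yoneda(X)}$ by comparing condition (iii) of Proposition~\ref{CharLClsV} for $M=\yoneda(X)\subseteq\hat{X}$ with the tightness inequality~\eqref{tightV}, using Lemma~\ref{YonedaLemV} for the factor $\hat{a}(\yoneda(y),\psi)=\psi(y)$ and the explicit formula for $\hat{a}(\psi,\yoneda(y))$ for the other, and then assemble the remaining claims from earlier results. The only quibble is your citation of Lemma~\ref{CompCancelV}(2) for transporting L-density along $X\to\yoneda(X)\hookrightarrow\tilde{X}$: what you need is item (1) (closure of L-dense maps under composition, the first factor being L-dense because it is surjective and fully faithful).
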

\begin{proof}
By Proposition \ref{CharLClsV}, a $\V$-functor $\psi:X^\op\to\V$ lies in the L-closure of $\yoneda(X)$ in $\hat{X}$ if, and only if,
\[
k\le\bigvee_{y\in X}\hat{a}(\psi,\yoneda(y))\otimes\hat{a}(\yoneda(y),\psi).
\]
Since $\hat{a}(\yoneda(y),\psi)=\psi(y)$ by Lemma \ref{YonedaLemV}, this means precisely that $\psi$ must be tight.
\end{proof}
\begin{theorem}
The full subcategory $\CatCompl{\V}$ of $\CatSep{\V}$ of L-complete $\V$-categories is an epi-reflective subcategory of $\CatSep{\V}$. The reflection map of a L-separated $\V$-category $X$ is given by any dense embedding of $X$ into a L-complete and L-separated $\V$-category, for instance by $\yoneda:X\to\tilde{X}$.
\end{theorem}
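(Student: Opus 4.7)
My approach is to show that $\yoneda\colon X\to\tilde X$ realises the reflection, and then to deduce the ``any dense embedding'' clause by a uniqueness argument. The ingredients are all available above: $\tilde X$ is L-complete (by the preceding proposition), and $\tilde X$ is L-separated since it is a full $\V$-subcategory of $\hat X$ and L-separation is inherited by subobjects (Corollary after Proposition \ref{EqCondSepV}). Thus $\tilde X$ lies in $\CatCompl{\V}\cap\CatSep{\V}$, as required for $\yoneda$ to even be a candidate reflection map.

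For the universal property, let $f\colon X\to Y$ be a $\V$-functor with $X\in\CatSep{\V}$ and $Y\in\CatCompl{\V}$. Since $\yoneda\colon X\to\tilde X$ is fully faithful and L-dense (preceding proposition), it is an L-equivalence; since $Y$ is L-complete, by the characterization theorem just above it is L-injective, yielding $\bar f\colon \tilde X\to Y$ with $\bar f\cdot\yoneda\cong f$. Because $Y$ is L-separated, $\cong$ collapses to $=$ for $\V$-functors into $Y$, so $\bar f\cdot\yoneda=f$. For uniqueness of $\bar f$, if $\bar f,\bar f'\colon\tilde X\to Y$ both extend $f$, L-density of $\yoneda$ (in its ``epi up to $\cong$'' formulation from Subsection \ref{LdenseV}) gives $\bar f\cong\bar f'$, which again collapses to equality by L-separation of $Y$. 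This establishes the reflection and simultaneously shows that $\yoneda$ is an epimorphism in $\CatSep{\V}$: any two $\V$-functors $\tilde X\to Y\in\CatSep{\V}$ agreeing on the image of $\yoneda$ coincide.

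The closure properties then follow formally: limits in $\CatCompl{\V}$ are computed in $\CatSep{\V}$ (and hence in $\Cat{\V}$) because the defining adjointness condition for L-completeness is preserved by limits, while colimits are obtained by reflecting those of $\CatSep{\V}$, as for any reflective subcategory. For the ``any dense embedding'' clause, given an L-dense fully faithful $\V$-functor $\eta\colon X\to Z$ into an L-complete L-separated $Z$, the universal property yields a unique comparison $\V$-functor $h\colon\tilde X\to Z$ with $h\cdot\yoneda=\eta$; running the argument in the opposite direction produces an inverse up to $\cong$, which again becomes genuine equality by L-separation, so $h$ is an isomorphism and $\eta$ is itself a reflection map.

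The main delicate point throughout is the consistent use of L-separation of the target to promote $\cong$ to $=$ — once in invoking L-injectivity, and once in the uniqueness of extensions. No calculation beyond what is already assembled in Subsections \ref{LsepV}--\ref{LdenseV} and the preceding two theorems is needed.
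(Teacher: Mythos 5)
Your argument is correct and is exactly the assembly the paper intends: the preceding proposition gives that $\tilde{X}$ is L-complete, L-separated, and that $\yoneda$ is an L-equivalence, while Theorem \ref{CharComplT} (L-complete $\iff$ L-injective) supplies the extension, and L-separation of the target together with L-density of $\yoneda$ upgrades $\cong$ to $=$ for both existence and uniqueness. The paper leaves this proof implicit, and your write-up fills it in along the same lines, including the standard uniqueness-of-reflections argument for the ``any dense embedding'' clause.
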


\section{The $\Tth$-setting}

\subsection{The theory $\Tth$}\label{Topth}
From now on we assume that the quantale $\V$ is part of a \emph{strict topological theory} $\Tth=\toptheory$ as introduced in \cite{Hof_TopTh}. Here $\mT=\monad$ is a $\SET$-monad where $T$ and $m$ satisfy \BC\ (that is, $T$ sends pullbacks to weak pullbacks and each naturality square of $m$ is a weak pullback) and $\xi:T\V\to\V$ is a map such that
\begin{align*}
 &\hspace{2em} 1_{\V}=\xi\cdot e_{\V}, &&
 &\xi\cdot T\xi=\xi\cdot m_{\V},\\
\intertext{the diagrams}
 &\hspace{-2em}
\xymatrix{T(\V\times\V)\ar[rr]^-{T(\otimes)}\ar[d]_{\langle\xi\cdot T\pi_1,\xi\cdot T\pi_2\rangle} && T\V\ar[d]^\xi\\ \V\times\V\ar[rr]_-{\otimes} && \V,} &&
 &
\xymatrix{T1\ar[d]_{!}\ar[r]^{Tk} & T\V\ar[d]^\xi\\ 1\ar[r]_k & \V,}
\end{align*}
commute and 
\begin{itemize}
\item[] $(\xi_{_X})_{X}:P_{\V}\to P_{\V} T$ is a natural transformation, where $P_\V$ is the $\V$-powerset functor considered as a functor from $\SET$ to $\ORD$ and the $X$-component $\xi_X:P_\V(X)\to P_\V T(X)$ is given by $\varphi\mapsto \xi\cdot T\varphi$.
\end{itemize}
Here $P_\V(X)=\V^X$, and for a function $f:X\to Y$ we have a canonical map $f^{-1}:\V^Y\to\V^X,\,\varphi\mapsto\varphi\cdot f$.  Now $P_\V(f)$ is defined as the left adjoint to $f^{-1}$, explicitly, for $\varphi\in\V^X$ we have $P_\V(\varphi)(y)=\bigvee_{x\in f^{-1}(y)}\varphi(x)$.
Furthermore, we assume $T1=1$. 
 \begin{examples}\label{ExTheories}
\begin{enumerate}
\item\label{ExId} For each quantale $\V$, $\itheory$ is a strict topological theory, where $\mI=\imonad$ denotes the identity monad.
\item\label{ExTop} $\Uth_{\two}=(\mU,\two,\xi_{\two})$ is a strict topological theory, where $\mU=\umonad$ denotes the ultrafilter monad and $\xi_{\two}$ is essentially the identity map.
\item\label{ExApp} $\Uth_{\Pplus}=(\mU,\Pplus,\xi_{\Pplus})$ is a strict topological theory, where
\[
\xi_{\Pplus}:U\Pplus\to\Pplus,\;\;\frx\mapsto\inf\{v\in\Pplus\mid[0,v]\in\frx\}.
\]
\end{enumerate}
\end{examples}
As shown in \cite[Lemma 3.2]{Hof_TopTh}, the right adjoint $\multimap$ of the tensor product $\otimes$ in $\V$ is automatically compatible with the map $\xi:T\V\to\V$ in the sense that
\begin{gather*}
\xi\cdot T(\multimap)\le\multimap\cdot\langle\xi\cdot T\pi_1,\xi\cdot T\pi_2\rangle.\\
\xymatrix{T(\V\times\V)\ar[rr]^-{T(\multimap)}\ar[d]_{\langle\xi\cdot T\pi_1,\xi\cdot T\pi_2\rangle}\ar@{}[drr]|{\ge} && T\V\ar[d]^\xi\\ \V\times\V\ar[rr]_-{\multimap} && \V}
\end{gather*}
Furthermore, our condition $T1=1$ implies $m_X^\circ\cdot e_X=e_{TX}\cdot e_X$ for each set $X$. In fact, $m_X^\circ\cdot e_X\ge e_{TX}\cdot e_X$ is true for each monad since $m_X^\circ\ge e_{TX}$. Let now $\frX\in TTX$ and $x\in X$ such that $m_X(\frX)=e_X(x)$. We consider the commutative diagram
\[
\xymatrix{TT1\ar[d]_{m_1}\ar[r]^{TTx} & TTX\ar[d]^{m_X}\\ T1\ar[r]_{Tx} & TX,}
\]
where $x:1\mapsto X$. Since $m$ satisfies \BC, there is some $\frY\in TT1=1$ with $TTx(\frY)=\frX$, that is, $\frX=e_{TX}\cdot e_X(x)$.

The functor $T:\SET\to\SET$ can be extended to a 2-functor $\Txi:\Mat{\V}\to\Mat{\V}$ as follows. Given a $\V$-relation $r:X\times Y\to\V$, we define $\Txi r:TX\times TY\to\V$ as the left Kan-extension
\[
\xymatrix{T(X\times Y)\ar[rr]^\can\ar[dr]_{\xi_{X\times Y}(r)=\xi\cdot Tr} & & 
TX\times TY\ar@{..>}[dl]^{\Txi r}\\ & \V}
\]
in $\ORD$ (where $TX$, $TY$, $T(X\times Y)$ are discrete), i.e.\ the smallest (order-preserving) map $s:TX\times TY\to\V$ such that $\xi\cdot Tr\le g\cdot\can$. Elementwise, we have
\[
\Txi r(\frx,\fry)=\bigvee\left\{\xi\cdot Tr(\frw)\;\Bigl\lvert\;\frw\in T(X\times Y), T\pi_1(\frw)=\frx,T\pi_2(\frw)=\fry\right\}
\]
for each $\frx\in TX$ and $\fry\in TY$. We have the following properties.
\begin{proposition}[\cite{Hof_TopTh}]
The following assertions hold.
\begin{enumerate}
\item For each $\V$-matrix $r:X\relto Y$, $\Txi(r^\circ)=\Txi(r)^\circ$ (and we write $\Txi r^\circ$).
\item For each function $f:X\to Y$, $Tf= \Txi f$ (and therefore also $Tf^\circ= \Txi f^\circ$).
\item $e_Y\cdot r\leq \Txi r\cdot e_X$ for all $r:X\relto Y$ in $\Mat{\V}$.
\item $m_Y\cdot \Txi^2 r=\Txi r\cdot m_X$ for all $r:X\relto Y$ in $\Mat{\V}$.
\end{enumerate}
\end{proposition}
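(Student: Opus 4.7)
The plan is to unpack the Kan-extension formula
$$\Txi r(\frx,\fry)=\bigvee\left\{\xi\cdot Tr(\frw)\;\Bigl\lvert\;\frw\in T(X\times Y),\;T\pi_1(\frw)=\frx,\;T\pi_2(\frw)=\fry\right\}$$
for each of the four claims, and to combine it with the defining axioms of the topological theory: $\xi\cdot e_\V=1_\V$, $\xi\cdot T\xi=\xi\cdot m_\V$, $\xi\cdot Tk=k$ (together with $T1=1$), order-preservation of $\xi\cdot T(-)$ (a consequence of the naturality condition on $(\xi_X)_X$), and the \BC\ property for $T$ and $m$.

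Parts (1)--(3) reduce to direct witness arguments. For (1), the swap $\sigma:X\times Y\to Y\times X$ satisfies $\pi_i\cdot\sigma=\pi_{3-i}$ and $r^\circ\cdot\sigma=r$; hence $T\sigma$ is a bijection between the indexing sets of the two suprema and matches the integrands. For (2) with $f:X\to Y$ and $(\frx,\fry)\in TX\times TY$, if $\fry=Tf(\frx)$ the element $\frw:=T\langle 1_X,f\rangle(\frx)$ is admissible, and $f\cdot\langle 1_X,f\rangle$ being the constant-$k$ map gives $\xi\cdot Tf(\frw)=k$; the bound $\Txi f(\frx,\fry)\le k$ then follows from $f\le k$ pointwise together with order-preservation. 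If $\fry\ne Tf(\frx)$, use that $\mathrm{graph}(f)$ is the pullback of $\pi_2$ and $f\cdot\pi_1$: by \BC\ for $T$ no admissible $\frw$ lies in $T(\mathrm{graph}(f))$, and off the graph $f$ is $\bot$, so the supremum collapses. For (3), naturality of $e$ makes $e_{X\times Y}(x,y)$ an admissible witness, and $\xi\cdot Tr\cdot e_{X\times Y}(x,y)=\xi\cdot e_\V\cdot r(x,y)=r(x,y)$ delivers the inequality.

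Part (4) is the main obstacle. Both sides are double suprema, and each inequality is a witness-lifting argument. For $\Txi r\cdot m_X\le m_Y\cdot\Txi^2 r$, pick $\frw\in T(X\times Y)$ with $T\pi_1(\frw)=m_X(\frX)$ and $T\pi_2(\frw)=\fry$; \BC\ applied to the naturality square of $m$ yields $\frW'\in TT(X\times Y)$ with $m_{X\times Y}(\frW')=\frw$ and $TT\pi_1(\frW')=\frX$, and $\frY:=TT\pi_2(\frW')$ then satisfies $m_Y(\frY)=\fry$. The chain
$$\xi\cdot Tr(\frw)=\xi\cdot Tr\cdot m_{X\times Y}(\frW')=\xi\cdot m_\V\cdot TTr(\frW')=\xi\cdot T\xi\cdot TTr(\frW')=\xi\cdot T(\xi\cdot Tr)(\frW')\le\xi\cdot T(\Txi r)(T\can(\frW')),$$
invoking naturality of $m$, the algebra identity $\xi\cdot T\xi=\xi\cdot m_\V$, the Kan-extension inequality $\xi\cdot Tr\le\Txi r\cdot\can$, and order-preservation, exhibits $T\can(\frW')$ as an admissible witness for $\Txi^2 r(\frX,\frY)$. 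The reverse direction is symmetric: any admissible $\frW\in T(TX\times TY)$ for the LHS lifts through $T\can$ via \BC\ for $T$ applied to the weak-pullback description of $\can:T(X\times Y)\to TX\times TY$, and the same chain, read backwards, yields the bound. The bookkeeping of these lifts is the only genuinely technical point; the remainder is formal.
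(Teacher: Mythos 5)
First, a caveat: the paper does not prove this proposition itself --- it quotes it from \cite{Hof_TopTh} --- so you are being measured against the standard argument there. Your parts (1) and (3), and the inequality $\Txi r\cdot m_X\le m_Y\cdot\Txi^2 r$ in (4), are correct and essentially the standard proofs: the swap bijection $T\sigma$; the witness $e_{X\times Y}(x,y)$ together with naturality of $e$ and $\xi\cdot e_\V=1_\V$; and \BC\ for the naturality square of $m$ at $\pi_1$ to produce $\frW'\in TT(X\times Y)$ with $m_{X\times Y}(\frW')=\frw$ and $TT\pi_1(\frW')=\frX$.

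The genuine gap is the reverse inequality $m_Y\cdot\Txi^2 r\le\Txi r\cdot m_X$. Granting that $\can$ is surjective (this does follow from \BC\ and $T1=1$, since $T$ sends the pullback $X\times_1 Y$ to a weak pullback over $T1=1$), every $\frW\in T(TX\times TY)$ lifts to some $\frV$ with $T\can(\frV)=\frW$; but ``the same chain, read backwards'' then fails, because the only non-trivial step in your chain is the Kan-extension inequality $\xi\cdot Tr\le\Txi r\cdot\can$, and applying the monotone $\xi\cdot T(-)$ to it yields $\xi\cdot Tr(m_{X\times Y}(\frV))\le\xi\cdot T(\Txi r)(\frW)$ --- a \emph{lower} bound on the quantity you must bound from \emph{above}. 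No single lift can give the upper bound. What is actually needed is the fourth axiom of a topological theory, naturality of $(\xi_X)_X:P_\V\to P_\V T$, applied to the map $\can$: since $\Txi r=P_\V(\can)(\xi\cdot Tr)$ by definition of the left Kan extension, naturality yields the \emph{equality} $\xi\cdot T(\Txi r)=P_\V(T\can)\bigl(\xi\cdot T(\xi\cdot Tr)\bigr)=P_\V(T\can)\bigl(\xi\cdot Tr\cdot m_{X\times Y}\bigr)$, that is, $\xi\cdot T(\Txi r)(\frW)=\bigvee\{\xi\cdot Tr(m_{X\times Y}(\frV))\mid T\can(\frV)=\frW\}$; and each $m_{X\times Y}(\frV)$ occurring here projects to $(m_X(\frX),m_Y(\frY))$, which gives the bound. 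The same axiom --- not \BC\ --- is also what you need in part (2) to make the supremum ``collapse off the graph'': pointwise vanishing of $f_\circ$ off the graph plus monotonicity of $\xi\cdot T(-)$ does not force $\xi\cdot Tf_\circ(\frw)=\bot$ for $\frw$ outside the image of $T\langle 1_X,f\rangle$; writing $f_\circ=P_\V(\langle 1_X,f\rangle)(k)$ and invoking naturality does.
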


\subsection{$\Tth$-relations} We define a \emph{$\Tth$-relation} from $X$ to $Y$ to be a $\V$-relation of the form $a:TX\relto Y$, and write $a:X\krelto Y$. Given also $b:Y\krelto Z$, the composite $b\kleisli a:X\krelto Z$ is given by the Kleisli convolution
\[
 b\kleisli a = b\cdot\Txi a \cdot m_X^\circ.
\]
Composition of $\Tth$-relations is associative, and for each $\Tth$-matrix $a:X\krelto Y$ we have $a\kleisli e_X^\circ=a$ and $e_Y^\circ\kleisli a\ge a$, hence $e_X^\circ:X\krelto X$ is a lax identity. We call a $\Tth$-relation $a:X\krelto Y$ \emph{unitary} if $e_Y^\circ\kleisli a= a$, so that $e_X^\circ:X\krelto X$ is the identity on $X$ in the category $\UMat{\Tth}$ of sets and unitary $\Tth$-relations, with the Kleisli convolution as composition. The hom-sets of $\UMat{\Tth}$ inherit the order-structure from $\Mat{\V}$ , and composition of (unitary) $\Tth$-relations respects this order in both variables. Many notions and arguments can be transported from the $\V$-setting to the $\Tth$-setting by substituting relational composition by Kleisli convolution.

Given a $\Tth$-relation $c:X\krelto Z$, the composition by $c$ from the right side has a right adjoint but composition by $c$ from the left side in general not. Explicitely, given also $b:X\krelto Y$, we pass from
\begin{align*}
\xymatrix{X\ar@{-^{>}}|-{\object@{|}}[r]^b\ar@{-^{>}}|-{\object@{|}}[d]_c & Y\\ Z} &&\text{to}&&
\xymatrix{TX\ar[r]|-{\object@{|}}^b\ar[d]|-{\object@{|}}_{m_X^\circ} & Y\\
TTX\ar[d]|-{\object@{|}}_{\Txi c}\\ TZ}\\
\text{in $\UMat{\Tth}$} &&&& \text{in $\Mat{\V}$}
\end{align*}
and define the extension $b \homkleislileft c:Z\krelto Y$ as $b\homcompleft(\Txi c\cdot m_X^\circ):TZ\relto Y$.

\subsection{$\Tth$-categories}\label{TCat}
A $\Tth$-category $X=(X,a)$ is a set $X$ equipped with a $\Tth$-relation $a:X\krelto X$ satisfying $e_X^\circ\le a$ and $a\kleisli a\le a$, equivalentely,
\begin{align*}
 k\le a(e_X(x),x), && \Txi a(\frX,\frx)\otimes a(\frx,x)\le a(\frx,x)
\end{align*}
for all $\frX\in TT X$, $\frx\in TX$ and $x\in X$. A $\Tth$-functor $f:(X,a)\to(Y,b)$ must satisfy $f\cdot a\le b\cdot Tf$, which in pointwise notation reads as
\[
 a(\frx,x)\le b(Tf(\frx),f(x))
\]
for all $\frx\in TX$ and $x\in X$. The resulting category of $\Tth$-categories and $\Tth$-functors is denoted by $\Cat{\Tth}$ (see also \cite{CH_TopFeat,CT_MultiCat,CHT_OneSetting}). Note that the quantale $\V$ becomes in a natural way a $\Tth$-category $\V=(\V,\hom_\xi)$ where $\hom_\xi:T\V\times\V\to\V,\;(\frv,v)\mapsto(\xi(\frv)\multimap v)$.
\begin{examples}
\begin{enumerate}
\item For each quantale $\V$, $\Ith_\V$-categories are precisely $\V$-categories and $\Ith_\V$-functors are $\V$-functors. 
\item The main result of \cite{Bar_RelAlg} states that $\Cat{\Uth_\two}$ is isomorphic to the category $\TOP$ of topological spaces. The $\Uth_\two$-category $\V=\two$ is the Sierpinski space with $\{0\}$ open and $\{1\}$ closed. In \cite{CH_TopFeat} it is shown that $\Cat{\Uth_{\Pplus}\!}$ is isomorphic to the category $\AP$ of approach spaces (see \cite{Low_ApBook} for more details about $\AP$).
\end{enumerate}
\end{examples}
A $\Tth$-category $X=(X,a)$ can be also thought of as a lax Eilenberg--Moore algebra, since the two conditions above can be equivalentely expressed as
\begin{align*}
 1_X\le a\cdot e_X, && a\cdot\Txi a\le a\cdot m_X.
\end{align*}
As a consequence, each $\mT$-algebra $(X,\alpha)$ can be considered as a $\Tth$-category by simply regarding the function $\alpha:TX\to X$ as a $\Tth$-relation $\alpha:X\krelto X$. The free Eilenberg-Moore algebra $(TX,m_X)$ --  viewed as a $\Tth$-category -- is denoted by $|X|$.

Each $\Tth$-category $X=(X,a)$ has an underlying $\V$-category $\ForgetToV\!X=(X,a\cdot e_X)$. Indeed, this defines a functor $\ForgetToV:\Cat{\Tth}\to\Cat{\V}$ which has a left adjoint $\ForgetToVAd:\Cat{\V}\to\Cat{\Tth}$ defined by $\ForgetToVAd\!X=(X,e_X^\circ\cdot\Txi r)$, for each $\V$-category $X=(X,r)$. There is yet another interesting functor connecting $\Tth$-categories and $\V$-categories, namely $\MFunctor:\Cat{\Tth}\to\Cat{\V}$ which sends a $\Tth$-category $(X,a)$ to the $\V$-category $(TX,\Txi a\cdot m_X^\circ)$. The \emph{dual $\Tth$-category} $X^\op$ (see \cite{CH_Compl}) of a $\Tth$-category $X=(X,a)$ is then defined as \[X^\op=\ForgetToVAd(\MFunctor(X)^\op).\]
\begin{examples}\label{Zariski_down}
For $\mT=\mU$ the ultrafilter monad, the topology on $|X|$ can be described via the Zariski-closure:
\[
\frx\in\cl(\calA)\iff \frx\supseteq\bigcap\calA \iff \bigcup\calA\subseteq\frx,
\]
for $\frx\in UX$ and $\calA\subseteq UX$. Furthermore, for $X\in\Cat{\Uth_\two}\cong\TOP$, $M(X)=(UX,\le)$ is the (pre)ordered set where
\[
\frx\le\fry\iff \forall A\in\frx\,.\,\overline{A}\in\fry
\]
for $\frx,\fry\in UX$. Then $X^\op$ is the Alexandroff space induced by the dual order $\ge$. If $X\in\Cat{\Uth_{\Pplus}}\cong\AP$ is an approach space with distance function $\dist:PX\times X\to\Pplus$, then $M(X)=(UX,d)$ is the (generalized) metric space with
\[
d(\frx,\fry)=\inf\{\eps\in [0,\infty]\mid \forall A\in\frx\,.\,\overline{A}^{(\eps)}\in\fry\},
\]
where $\frx,\fry\in UX$ and $\overline{A}^{(\eps)}=\{x\in X\mid \dist(A,x)\le\eps\}$.
\end{examples}

The tensor product of $\V$ can be transported to $\Cat{\Tth}$ by putting $(X,a)\otimes(Y,b)=(X\times Y,c)$ with
\begin{equation}\label{TensAlg}
c(\frw,(x,y))=a(\frx,x)\otimes b(\fry,y),
\end{equation}
where $\frw\in T(X\times Y)$, $x\in X$, $y\in Y$, $\frx=T\pi_1(\frw)$ and $\fry=T\pi_2(\frw)$. The $\Tth$-category $E=(E,k)$ is a $\otimes$-neutral object, where $E$ is a singleton set and  $k$ the constant relation with value $k\in\V$. Unlike the $\V$-case, this does not result in general in a closed structure on $\Cat{\Tth}$. However, as shown in \cite{Hof_TopTh}, if a $\Tth$-category $X=(X,a)$ satisfies $a\cdot\Txi a=a\cdot m_X$, then $X\otimes\_:\Cat{\Tth}\to\Cat{\Tth}$ has a right adjoint $\_^X:\Cat{\Tth}\to\Cat{\Tth}$. Explicitly, for a $\Tth$-category $Y=(Y,b)$, the exponential $X\multimap Y$ is given by the set
\[
\{f:X\to Y\mid f\text{ is a $\Tth$-functor}\}
\]
equipped with the structure-relation $\fspstr{a}{b}$ defined as
\[
\fspstr{a}{b}(\frp,h)=\bigvee\left\{v\in\V\;\Bigl\lvert\;\forall\frq\in T\pi_2^{-1}(\frp),x\in X\;.\; a(T\pi_1(\frq),x)\otimes v\le b(T\!\ev(\frq),h(x))\right\},
\]
where $\frp\in T(Y^X)$, $h\in Y^X$, $\pi_1:X\times(X\multimap Y)\to X$ and $\pi_2:X\times(X\multimap Y)\to Y^X$. Using the adjunction $u\otimes\_\dashv u\multimap\_$ in $\V$, we see that 
\[
\fspstr{a}{b}(\frp,h)=\bigwedge_{\substack{\frq\in T(X\times(X\multimap Y)),x\in X\\ \frq\mapsto \frp}}\hspace{-4ex}
a(T\pi_1(\frq),x)\multimap b(T\!\ev(\frq),h(x)).
\]
\begin{lemma}\label{FunSp_PrinEl}
Let $X=(X,a)$, $Y=(Y,b)$ be $\Tth$-categories with $a\cdot\Txi a=a\cdot m_X$ and $h,h'\in(X\multimap Y)$. Then \[\fspstr{a}{b}(e_{Y^X}(h),h')=\bigwedge_{x\in X}b(e_Y(h(x)),h'(x)).\]
\end{lemma}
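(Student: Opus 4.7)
The plan is first to use \BC\ to parametrise the fibre $T\pi_2^{-1}(e_{Y^X}(h))$ cleanly, then to match the resulting infimum against the right-hand side using reflexivity and transitivity of the $\Tth$-structures.

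Applying \BC\ to the pullback of $\pi_2$ along $\{h\} : 1 \to Y^X$ — whose upper edge is the section $i_h : X \hookrightarrow X\times(X\multimap Y)$, $x \mapsto (x,h)$ — and using $T1 = 1$, every $\frq \in T(X\times (X\multimap Y))$ with $T\pi_2(\frq) = e_{Y^X}(h)$ is of the form $\frq = Ti_h(\tilde{\frq})$ for some $\tilde{\frq} \in TX$. Since $\pi_1 \cdot i_h = 1_X$ and $\ev \cdot i_h = h$, one computes $T\pi_1(\frq) = \tilde{\frq}$ and $T\ev(\frq) = Th(\tilde{\frq})$, so that the defining infimum for $\fspstr{a}{b}(e_{Y^X}(h), h')$ collapses to
$$\fspstr{a}{b}(e_{Y^X}(h), h') = \bigwedge_{\tilde{\frq} \in TX,\ x \in X} a(\tilde{\frq}, x) \multimap b(Th(\tilde{\frq}), h'(x)).$$

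For the inequality ``$\leq$'' I restrict the infimum to $\tilde{\frq} = e_X(x)$; naturality of $e$ gives $Th(e_X(x)) = e_Y(h(x))$, and reflexivity $k \leq a(e_X(x), x)$ combined with antitonicity of $\multimap$ in its first argument bounds the corresponding summand above by $b(e_Y(h(x)), h'(x))$. For ``$\geq$'' it suffices to establish the pointwise inequality $a(\tilde{\frq}, x) \otimes b(e_Y(h(x)), h'(x)) \leq b(Th(\tilde{\frq}), h'(x))$ for every $\tilde{\frq}$ and $x$. I obtain this in two stages: $\Tth$-functoriality of $h$ yields $a(\tilde{\frq}, x) \leq b(Th(\tilde{\frq}), h(x))$, and then the transitivity $b\cdot\Txi b \leq b\cdot m_Y$ applied to $\frY = e_{TY}(Th(\tilde{\frq}))$, $\fry = e_Y(h(x))$, $y = h'(x)$ — combined with the inequality $b(Th(\tilde{\frq}), h(x)) \leq \Txi b(e_{TY}(Th(\tilde{\frq})), e_Y(h(x)))$ coming from $e_Y\cdot b \leq \Txi b\cdot e_{TY}$ — delivers the ``mixed transitivity'' $b(Th(\tilde{\frq}), h(x)) \otimes b(e_Y(h(x)), h'(x)) \leq b(Th(\tilde{\frq}), h'(x))$.

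The main technical step is the \BC\ reduction in the first paragraph; once the infimum has been rewritten in terms of $\tilde{\frq} \in TX$ alone, the remainder is a routine assembly of the $\Tth$-category axioms. Note that the hypothesis $a\cdot\Txi a = a\cdot m_X$ is not used in the calculation itself, only to guarantee that $X\multimap Y$ actually exists as a $\Tth$-category so that the formula for $\fspstr{a}{b}$ is meaningful.
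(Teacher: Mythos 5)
Your proof is correct. The paper states Lemma \ref{FunSp_PrinEl} without proof (it is essentially imported from \cite{Hof_TopTh}), so there is no argument to compare against; but the route you take is exactly the right one. The key step is indeed the \BC{} reduction: applying $T$ to the pullback of $\pi_2$ along $h:1\to Y^X$ (together with $T1=1$ and naturality of $e$, which identifies $Th$ of the unique element of $T1$ with $e_{Y^X}(h)$) is what lets you replace $T\ev(\frq)$ by $Th(\tilde{\frq})$ and $T\pi_1(\frq)$ by $\tilde{\frq}$ for every $\frq$ in the fibre --- without \BC{} one cannot relate $T\ev(\frq)$ to $Th(T\pi_1(\frq))$, since $\ev$ and $h\cdot\pi_1$ only agree on $X\times\{h\}$. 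The two inequalities are then handled correctly: ``$\le$'' by specialising to $\tilde{\frq}=e_X(x)$ and using reflexivity of $a$, and ``$\ge$'' by combining $\Tth$-functoriality of $h$ with the mixed transitivity $b(\fry,y)\otimes b(e_Y(y),y')\le b(\fry,y')$, which you correctly derive from $e_Y\cdot b\le \Txi b\cdot e_{TY}$ and $b\cdot\Txi b\le b\cdot m_Y$. Your closing remark is also accurate: the hypothesis $a\cdot\Txi a=a\cdot m_X$ enters only to ensure that $\fspstr{a}{b}$ is the structure of an actual exponential $\Tth$-category, not in the computation itself.
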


\subsection{$\Tth$-modules} Let $X=(X,a)$ and $Y=(Y,b)$ be $\Tth$-categories and $\varphi:X\krelto Y$ be a $\Tth$-relation. We call $\varphi$ a \emph{$\Tth$-module}, and write $\varphi:X\kmodto Y$, if $\varphi\kleisli a\le\varphi$ and $b\kleisli \varphi\le \varphi$. Note that we have always $\varphi\kleisli a\ge\varphi$ and $b\kleisli \varphi\ge \varphi$, so that the $\Tth$-module condition above implies equality. It is easy to see that the extension as well as the lifting (if it exists) in $\UMat{\Tth}$ of $\Tth$-modules is again a $\Tth$-module. Furthermore, we have $a:X\kmodto X$ for each $\Tth$-category $X=(X,a)$; in fact, $a$ is the identity $\Tth$-module on $X$ for the Kleisli convolution. The category of $\Tth$-categories and $\Tth$-modules, with Kleisli convolution as composition is denoted by $\Mod{\Tth}$. In fact, $\Mod{\Tth}$ is an ordered category, with the structure on $\hom$-sets inherited from $\UMat{\Tth}$. 

Let now $X=(X,a)$ and $Y=(Y,b)$ be $\Tth$-categories and $f:X\to Y$ be a $\SET$-map. We define $\Tth$-relations $f_*:X\krelto Y$ and $f^*:Y\krelto X$ by putting $f_*=b\cdot Tf$ and $f^*=f^\circ\cdot b$ respectively. Hence, for $\frx\in TX$, $\fry\in TY$, $x\in X$ and $y\in Y$, $f_*(\frx,y)=b(Tf(\frx),y)$ and $f^*(\fry,x)=b(\fry,f(x))$. Given now $\Tth$-modules $\varphi$ and $\psi$, we have 
\begin{align*}
\varphi\kleisli f_*&=\varphi\cdot Tf &&\text{and}&& f^*\kleisli\psi=f^\circ\cdot\psi.
\end{align*}
The latter equality follows from
\[
f^*\kleisli\psi=f^\circ\cdot b\cdot \Txi\psi\cdot m_Z^\circ=f^\circ\cdot\psi,
\]
whereby the first equality follows from
\begin{align*}
\varphi\kleisli f_*=\varphi\kleisli(b\cdot Tf)=\varphi\cdot \Txi b\cdot T^2f\cdot m_X^\circ
=\varphi\cdot \Txi b\cdot m_Y^\circ\cdot Tf=\varphi\cdot Tf.
\end{align*}
In particular we have $b\kleisli f_*=f_*$ and $f^*\kleisli b=f^*$, as well as $f_*\kleisli f^*=b\cdot Tf\cdot Tf^\circ\cdot\Txi b\cdot m_Y^\circ\le b$. In the latter case we have even equality provided that $f$ is surjective. As before, one easily verifies
\begin{proposition}
The following assertions are equivalent.
\begin{eqcond}
\item $f:X\to Y$ is a $\Tth$-functor.
\item $f_*$ is a $\Tth$-module $f_*:X\kmodto Y$.
\item $f^*$ is a $\Tth$-module $f^*:Y\kmodto X$.
\end{eqcond}
\end{proposition}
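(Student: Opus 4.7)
The plan is to mimic the proof of the analogous Lemma for $\V$-categories (stated in Section 1 just after the introduction of modules), systematically replacing ordinary $\V$-relation composition by the Kleisli convolution and inserting the appropriate compatibilities of $\Txi$ with the monad structure. A key preliminary observation is that the displayed calculations preceding the proposition already yield the identities $b\kleisli f_*=f_*$ and $f^*\kleisli b=f^*$ for every map $f:X\to Y$, with no hypothesis on $f$. Consequently, condition (ii) collapses to the single inequality $f_*\kleisli a\le f_*$, and (iii) to $a\kleisli f^*\le f^*$.

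To prove (i)$\Rw$(ii), I would apply the 2-functor $\Txi$ to the $\Tth$-functor inequality $f\cdot a\le b\cdot Tf$, invoking $\Txi f=Tf$ together with the preservation of composition by maps to obtain $Tf\cdot\Txi a\le\Txi b\cdot T^2 f$. Combined with the Beck-Chevalley identity $T^2 f\cdot m_X^\circ=m_Y^\circ\cdot Tf$ (a consequence of the $\BC$-condition on $m$) and the transitivity $b\kleisli b\le b$, this collapses into the one-line chain
\[
f_*\kleisli a=b\cdot Tf\cdot\Txi a\cdot m_X^\circ\le b\cdot\Txi b\cdot m_Y^\circ\cdot Tf=(b\kleisli b)\cdot Tf\le b\cdot Tf=f_*.
\]
The implication (i)$\Rw$(iii) runs in parallel, starting from the equivalent dual form $a\cdot Tf^\circ\le f^\circ\cdot b$ and using the identity $\Txi f^*=Tf^\circ\cdot\Txi b$, arriving at $a\kleisli f^*\le f^\circ\cdot(b\kleisli b)\le f^*$.

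For the converses I would evaluate the putative module inequalities at carefully chosen indices of the defining suprema. For (ii)$\Rw$(i), specialise $(f_*\kleisli a)(\frx,f(x))$ by taking $\frX=e_{TX}(\frx)$ (so $m_X(\frX)=\frx$) and $\frx'=e_X(x)$ in the join: item (3) of the cited Proposition gives $\Txi a(e_{TX}(\frx),e_X(x))\ge a(\frx,x)$, while the naturality $Tf\cdot e_X=e_Y\cdot f$ together with reflexivity $k\le b(e_Y(f(x)),f(x))$ dispose of the remaining factor, extracting exactly the $\Tth$-functor condition $a(\frx,x)\le b(Tf(\frx),f(x))$. For (iii)$\Rw$(i) the analogous specialisation of $(a\kleisli f^*)(Tf(\frx),x)$ uses $\frx'=\frx$ and $\frY=Te_Y\cdot Tf(\frx)=T^2 f\cdot Te_X(\frx)$, for which $m_X\cdot Te_X=1_{TX}$ yields $m_Y(\frY)=Tf(\frx)$, and applying $\Txi$ to $e_Y^\circ\le b$ forces $\Txi b(\frY,Tf(\frx))\ge k$.

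The main obstacle is bookkeeping rather than conceptual: one must keep track of several levels of $T$'s and ensure that the strict equalities used for $\Txi$ (on maps, for opposites via $\Txi r^\circ=(\Txi r)^\circ$, and via the $\BC$-naturality of $m$) are genuinely applicable in the particular compositions that arise. Granted these, each of the four implications is an exact transcription of the $\V$-categorical argument with Kleisli convolution replacing ordinary composition, and no further ideas beyond the ``Yoneda investment'' alluded to in the introduction are required.
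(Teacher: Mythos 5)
Your argument is correct and is precisely the verification the paper leaves to the reader (``as before, one easily verifies''): reduce (ii) and (iii) to the one-sided inequalities $f_*\kleisli a\le f_*$ and $a\kleisli f^*\le f^*$ via the already-established identities $b\kleisli f_*=f_*$ and $f^*\kleisli b=f^*$, push the $\Tth$-functor inequality through the $2$-functor $\Txi$ for the forward implications, and instantiate the defining joins at units ($\frX=e_{TX}(\frx)$, $\frx'=e_X(x)$, resp.\ $\frY=Te_Y\cdot Tf(\frx)$) together with $e_Y\cdot r\le\Txi r\cdot e_X$ and reflexivity for the converses. One minor remark: for $T^2f\cdot m_X^\circ\le m_Y^\circ\cdot Tf$, which is all your chain of inequalities needs, plain naturality of $m$ already suffices; the Beck--Chevalley condition is not required at that step.
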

As in the $\V$-case, we have functors
\[
\Cat{\Tth}\xrightarrow{(-)_*}\Mod{\Tth}\xleftarrow{(-)^*}\Cat{\Tth}^\op.
\]
We can transport the order-structure on hom-sets from $\Mod{\Tth}$ to $\Cat{\Tth}$ via the functor $(\_)^*:\Cat{\Tth}^\op\to\Mod{\Tth}$, that is, we define $f\le g$ if $f^*\le g^*$, or equivalentely, if $g_*\le f_*$. With this definition we turn $\Cat{\Tth}$ into an \emph{ordered category}. As usual, we call $\Tth$-functors $f,g:X\to Y$ \emph{equivalent}, and write $f\cong g$, if $f\le g$ and $g\le f$. Hence, $f\cong g$ if and only if $f^*=g^*$, which in turn is equivalent to $f_*=g_*$.
\begin{lemma}
Let $f,g:X\to Y$ be $\Tth$-functors between $\Tth$-categories $X=(X,a)$ and $Y=(Y,b)$. Then
\[
f\le g\iff \forall x\in X\;.\;k\le b(e_Y(f(x)),g(x)).
\]
\end{lemma}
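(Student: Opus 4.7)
The plan is to unpack the definition $f \le g \iff f^* \le g^*$, which reads pointwise as $b(\fry, f(x)) \le b(\fry, g(x))$ for all $\fry \in TY$ and $x \in X$, and then to establish equivalence of this family of inequalities with the single pointwise condition $k \le b(e_Y(f(x)), g(x))$.

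For the forward direction I would specialize $b(\fry, f(x)) \le b(\fry, g(x))$ at $\fry = e_Y(f(x))$ and invoke the reflexivity axiom $k \le b(e_Y(y), y)$ of the $\Tth$-category $Y$ at $y = f(x)$. This direction is nearly immediate.

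For the reverse direction, the idea is to use transitivity of $b$ with $e_Y(f(x)) \in TY$ as a bridge between $\fry$ and $g(x)$. Since $e_Y(f(x))$ lives in $TY$ rather than $Y$, I would first lift $b(\fry, f(x))$ one level by applying the inequality $e_Y \cdot b \le \Txi b \cdot e_{TY}$ (item 3 of the Proposition on $\Txi$), which pointwise gives $b(\fry, f(x)) \le \Txi b(e_{TY}(\fry), e_Y(f(x)))$. Using $m_Y \cdot e_{TY} = 1$ together with the transitivity $b \cdot \Txi b \cdot m_Y^\circ \le b$ of $Y$, I then obtain $\Txi b(e_{TY}(\fry), e_Y(f(x))) \otimes b(e_Y(f(x)), g(x)) \le b(\fry, g(x))$. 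Tensoring the hypothesis $k \le b(e_Y(f(x)), g(x))$ with $b(\fry, f(x))$ on the left and chaining these inequalities yields the desired $b(\fry, f(x)) \le b(\fry, g(x))$.

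The only (mild) obstacle is the passage from a statement about $Y$-elements (the hypothesis at $e_Y(f(x))$) to one involving $\Txi b$ at the $TTY$-level, so that transitivity of $Y$ may be applied. This is exactly the role of the compatibility $e_Y \cdot b \le \Txi b \cdot e_{TY}$; once it is invoked, the rest of the argument is routine bookkeeping with the monad unit and multiplication.
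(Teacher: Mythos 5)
Your proposal is correct and follows essentially the same route as the paper: the backward direction is verbatim the paper's argument (lift via $e_Y\cdot b\le \Txi b\cdot e_{TY}$, then apply transitivity of $b$ together with $m_Y\cdot e_{TY}=1$), and the forward direction differs only cosmetically (you evaluate $f^*\le g^*$ at $e_Y(f(x))$, the paper evaluates $g_*\le f_*$ at $e_X(x)$, both reducing to reflexivity). No gaps.
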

\begin{proof}
If $g_*\le f_*$, then
\[
k\le g_*(e_X(x),g(x))\le f_*(e_X(x),g(x))=b(e_Y(f(x)),g(x)).
\]
On the other hand, if $k\le b(e_Y(g(x)),f(x))$ for each $x\in X$, then
\[
f^*(\fry,x)=b(\fry,f(x))
\le \Txi b(e_{TY}(\fry),e_Y(f(x)))\otimes b(e_Y(f(x)),g(x))
\le b(\fry,g(x))=g^*(\fry,x).\qedhere
\]
\end{proof}
In particular, for $\Tth$-functors $f,g:X\to\V$, we have $f\le g$ if and only if $f(x)\le g(x)$ for all $x\in X$. Assume now that $X=(X,a)$, $Y=(Y,b)$ and $Z=(Z,c)$ are $\Tth$-categories where $a\cdot\Txi a=a\cdot m_X$. By combining the previous lemma with Lemma \ref{FunSp_PrinEl}, we obtain $f\le g\iff \mate{f}\le\mate{g}$ for all $\Tth$-functors $f,g:X\otimes Y\to Z$, where $\mate{f},\mate{g}:Y\to Z^X$.

\subsection{Yoneda}
Also $\Tth$-modules give rise to $\Tth$-functors, but besides $X^\op$ we must take also the $\Tth$-category $|X|$ (see \ref{TCat}) into consideration. 
\begin{theorem}[\cite{CH_Compl}]\label{CharTMod}
For $\Tth$-categories $(X,a)$ and $(Y,b)$, and a $\Tth$-relation $\psi:X\krelto Y$, the following assertions are equivalent.
\begin{eqcond}
\item $\psi:(X,a)\kmodto(Y,b)$ is a $\Tth$-module.
\item Both $\psi:|X|\otimes Y\to\V$ and $\psi:X^\op\otimes Y\to\V$ are $\Tth$-functors.
\end{eqcond}
\end{theorem}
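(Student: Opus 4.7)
The plan is to observe that condition (i) is equivalent to the conjunction of the two inequalities $\psi \kleisli a \le \psi$ and $b \kleisli \psi \le \psi$ (the reverse inequalities being automatic, as noted in the $\Tth$-modules subsection), and then to match each with exactly one of the two $\Tth$-functoriality conditions in (ii). Specifically, I expect $\psi : |X| \otimes Y \to \V$ being a $\Tth$-functor to correspond to $b \kleisli \psi \le \psi$, and $\psi : X^\op \otimes Y \to \V$ being a $\Tth$-functor to correspond to $\psi \kleisli a \le \psi$; once both matches are established, the equivalence follows at once.

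I would first unwind, for any $\Tth$-category $(Z,c)$, what it means for $\psi : Z \to (\V,\hom_\xi)$ to be a $\Tth$-functor: using the adjunction $\otimes \dashv \multimap$ in $\V$, the condition rewrites as the pointwise inequality $c(\frz,z) \otimes \xi T\psi(\frz) \le \psi(z)$ for all $\frz \in TZ$, $z \in Z$. When $Z$ is a tensor $(Z_1,c_1) \otimes (Z_2,c_2)$, taking suprema over $\frw \in T(Z_1 \times Z_2)$ with prescribed projections lets me replace $\xi T\psi(\frw)$ by $\Txi \psi$ evaluated on the pair $(T\pi_1(\frw), T\pi_2(\frw))$, via the Kan-extension definition of $\Txi$ in Section~\ref{Topth}.

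For the $|X| \otimes Y$ case I would use formula~\eqref{TensAlg} to compute $c(\frw,(\frx,y)) = m_X(T\pi_1(\frw),\frx) \otimes b(T\pi_2(\frw),y)$; the first factor forces $m_X(T\pi_1(\frw)) = \frx$, so taking suprema over $\frw$ with fixed $T\pi_1(\frw) = \frX \in TTX$ and $T\pi_2(\frw) = \fry \in TY$ converts the $\Tth$-functor condition into $b(\fry,y) \otimes \Txi\psi(\frX,\fry) \le \psi(m_X(\frX),y)$; a further supremum over $\fry$ yields $b \cdot \Txi\psi \le \psi \cdot m_X$, i.e.\ $b \kleisli \psi \le \psi$. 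All rewrites are equivalences, so the two statements coincide.

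The remaining matching --- $\psi : X^\op \otimes Y \to \V$ is a $\Tth$-functor iff $\psi \kleisli a \le \psi$ --- is where I expect the main difficulty, since $X^\op = \ForgetToVAd(\MFunctor(X)^\op)$ carries the more involved structure $e_{TX}^\circ \cdot \Txi(m_X \cdot \Txi a^\circ)$, which then still has to be tensored with $b$. To push the computation through I would rely on the compatibility $\xi \cdot T(\multimap) \le (\multimap) \cdot \langle \xi T\pi_1, \xi T\pi_2 \rangle$ recalled in Section~\ref{Topth}, the naturality identity $m_Y \cdot \Txi^2 r = \Txi r \cdot m_X$ (a consequence of $\BC$ for $m$), and the identity $m_X^\circ \cdot e_X = e_{TX} \cdot e_X$ derived in Section~\ref{Topth} from $T1 = 1$. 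Using $\BC$ to simplify the suprema over preimages under $e_{TX}$ and $m_X$, the $\Tth$-functor condition should collapse pointwise to $\Txi a(\frX,\frx') \otimes \psi(\frx',y) \le \psi(m_X(\frX),y)$, equivalently $\psi \cdot \Txi a \le \psi \cdot m_X$, i.e.\ $\psi \kleisli a \le \psi$. Combining the two reductions then gives the theorem.
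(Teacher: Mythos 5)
First, a point of reference: the paper does not prove Theorem \ref{CharTMod} at all --- it is imported from \cite{CH_Compl} --- so your proposal has to stand on its own. Your first reduction does stand: unwinding $\Tth$-functoriality into $(\V,\hom_\xi)$ via $\otimes\dashv\multimap$ and replacing $\xi\cdot T\psi$ by $\Txi\psi$ through the Kan-extension formula shows that $\psi:|X|\otimes Y\to\V$ is a $\Tth$-functor if and only if $b\kleisli\psi\le\psi$, exactly as you describe.

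The gap is in the second claimed biconditional, ``$\psi:X^\op\otimes Y\to\V$ is a $\Tth$-functor iff $\psi\kleisli a\le\psi$''. This is false, and the functor condition cannot ``collapse'' to an inequality in which $b$ does not occur: by \eqref{TensAlg} the structure of $X^\op\otimes Y$ tensors $a^\op$ with $b$, so the functoriality condition reads $a^\op(\frX,\frx)\otimes b(\fry,y)\otimes\Txi\psi(\frX,\fry)\le\psi(\frx,y)$ and genuinely constrains $\psi$ against $b$. Already for the identity theory $\Ith_\V$ of Examples \ref{ExTheories} your claim would say that $\psi:X^\op\otimes Y\to\V$ is a $\V$-functor iff $\psi\cdot a\le\psi$, contradicting Proposition \ref{CharModV}, by which this single functoriality is equivalent to the \emph{full} module condition. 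Concretely, take $\V=\two$, $X=E$, $Y=\{0,1\}$ with $b(1,0)=\bot$ and $b=k$ elsewhere, and $\psi(\star,0)=k$, $\psi(\star,1)=\bot$: then $\psi\cdot a\le\psi$ holds but $b\cdot\psi\not\le\psi$, so $\psi$ is not a $\V$-functor on $X^\op\otimes Y$. Hence the architecture ``two independent biconditionals, then conjoin'' cannot work. The correct structure is asymmetric: from (ii) one extracts $\psi\kleisli a\le\psi$ by evaluating the $X^\op\otimes Y$ condition only at $\frw=e_{TX\times Y}(\frx',y)$, where $\xi\cdot T\psi(\frw)=\psi(\frx',y)$, $k\le b(e_Y(y),y)$, and $a^\op(e_{TX}(\frx'),\frx)\ge\Txi a(\frY,\frx')$ for every $\frY$ with $m_X(\frY)=\frx$ (using $r\le\Txi r\cdot e$); conversely, establishing $X^\op\otimes Y$-functoriality from (i) requires \emph{both} inequalities $\psi\kleisli a\le\psi$ and $b\kleisli\psi\le\psi$ simultaneously, together with the technical identities you list. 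Your toolbox is the right one, but the logical skeleton must be repaired before it can be applied.
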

Since we have $a:X\kmodto X$ for each $\Tth$-category $X=(X,a)$, the theorem above provides us with two $\Tth$-functors
\begin{align*}
a:|X|\otimes X\to\V &&\text{and}&& a:X^\op\otimes X\to\V.
\end{align*}
To the mate $\yoneda=\mate{a}:X\to(|X|\multimap \V)$ of the first $\Tth$-functor we refer as the Yoneda functor. We have the following
\begin{theorem}[\cite{CH_Compl}]\label{Yoneda}
Let $X=(X,a)$ be a $\Tth$-category. Then the following assertions hold.
\begin{enumerate}
\item For all $\frx\in TX$ and $\varphi\in(|X|\multimap \V)$,  $\fspstr{m_X}{\hom_\xi}(T\!\yoneda(\frx),\varphi)\le\varphi(\frx)$.
\item Let $\varphi\in(|X|\multimap \V)$. Then
\begin{align*}
\forall\frx\in TX\,.\,\varphi(\frx)\le\fspstr{m_X}{\hom_\xi}(T\!\yoneda(\frx),\varphi) &&\iff&&
\varphi:X^\op\to\V\text{ is a $\Tth$-functor}.
\end{align*}
\end{enumerate}
\end{theorem}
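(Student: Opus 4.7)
The plan will hinge on the pullback square
\[
\xymatrix{TX\times X\ar[r]^-{1_{TX}\times\yoneda}\ar[d]_{\pi_2} & TX\times(|X|\multimap\V)\ar[d]^{\pi_2}\\ X\ar[r]_-{\yoneda} & |X|\multimap\V}
\]
together with the identity $\ev\cdot(1_{TX}\times\yoneda)=a$, which is forced by $\yoneda=\mate{a}$. Because $\mT$ satisfies $\BC$, $T$ preserves this square weakly, so every $\frq\in T(TX\times(|X|\multimap\V))$ with $T\pi_2(\frq)=T\yoneda(\frx)$ arises as $\frq=T(1_{TX}\times\yoneda)(\frw)$ for some $\frw\in T(TX\times X)$ with $T\pi_2(\frw)=\frx$; then $T\ev(\frq)=Ta(\frw)$ and hence $\xi\cdot T\ev(\frq)\le\Txi a(T\pi_1(\frq),\frx)$ by the very definition of $\Txi a$ as a left Kan extension. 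Conversely, $\BC$ guarantees that for each $(\frY,\frx)\in T^2X\times TX$ there is some $\frw\in T(TX\times X)$ with $T\pi_1(\frw)=\frY$ and $T\pi_2(\frw)=\frx$, and any such $\frw$ induces an admissible $\frq$ with matching projections and $\xi\cdot T\ev(\frq)=\xi\cdot Ta(\frw)$.

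For part (1) I would exhibit the single witness $\frq:=T\langle e_X,\yoneda\rangle(\frx)$: its second projection is $T\yoneda(\frx)$, and $m_X\cdot T\pi_1(\frq)=m_X\cdot Te_X(\frx)=\frx$ by the monad unit. The composite $\ev\cdot\langle e_X,\yoneda\rangle:X\to\V$ sends $x$ to $a(e_X(x),x)\ge k$, so since $\xi_X:P_\V(X)\to P_\V(TX)$ is monotone and $\xi\cdot Tk$ is constantly $k$ (the normalisation axiom), we obtain $\xi\cdot T\ev(\frq)\ge k$. Hence $\hom_\xi(T\ev(\frq),\varphi(\frx))=\xi(T\ev(\frq))\multimap\varphi(\frx)\le\varphi(\frx)$, bounding $\fspstr{m_X}{\hom_\xi}(T\yoneda(\frx),\varphi)$ above by $\varphi(\frx)$.

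For part (2) I would apply Theorem \ref{CharTMod} with $Y=E$: a $\Tth$-relation $\varphi:X\krelto E$ is a $\Tth$-module iff both $\varphi:|X|\to\V$ and $\varphi:X^\op\to\V$ are $\Tth$-functors. The first is the standing hypothesis $\varphi\in|X|\multimap\V$, so $\varphi:X^\op\to\V$ is a $\Tth$-functor iff the left-module condition $\varphi\kleisli a\le\varphi$ holds, which unpacks to
\[
(\ast)\quad\Txi a(\frY,\frx)\otimes\varphi(\frx)\le\varphi(m_X(\frY))\qquad(\frY\in T^2X,\ \frx\in TX).
\]
The bridge from the first paragraph then makes $(\ast)$ equivalent to the displayed inequality: assuming $(\ast)$, any admissible $\frq$ satisfies $\varphi(\frx)\otimes\xi\cdot T\ev(\frq)\le\varphi(\frx)\otimes\Txi a(T\pi_1(\frq),\frx)\le\varphi(m_X\cdot T\pi_1(\frq))$, giving the infimum bound; conversely, applying the displayed inequality to $\frq=T(1_{TX}\times\yoneda)(\frw)$ and taking the supremum over all $\frw$ with prescribed $(T\pi_1(\frw),T\pi_2(\frw))=(\frY,\frx)$ recovers $(\ast)$.

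The delicate point is precisely this last equivalence: one must align the infimum formula for $\fspstr{m_X}{\hom_\xi}$ with the supremum defining $\Txi a$, and it is the weak-pullback form of $\BC$, together with $\ev\cdot(1_{TX}\times\yoneda)=a$, that lets the ``outer'' universal quantification over $\frq$ pass through the ``inner'' existential quantification over $\frw$.
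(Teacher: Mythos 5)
The paper does not actually prove Theorem \ref{Yoneda}: it is imported from \cite{CH_Compl}, so there is no internal proof to compare against. Judged on its own, your argument is correct and is essentially the standard one. Part (1) works: the single test element $\frq=T\langle e_X,\yoneda\rangle(\frx)$ has second projection $T\yoneda(\frx)$ and first projection $Te_X(\frx)$ with $m_X\cdot Te_X=1$, and since $\ev\cdot\langle e_X,\yoneda\rangle(x)=a(e_X(x),x)\ge k$, monotonicity of $\xi\cdot T(-)$ together with the axiom making $\xi\cdot Tk$ constantly $k$ gives $\xi\cdot T\ev(\frq)\ge k$, so that single term of the defining infimum is already bounded by $\varphi(\frx)$. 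For part (2), the reduction via Theorem \ref{CharTMod} is legitimate: the condition that $\varphi:|X|\to\V$ be a $\Tth$-functor is exactly $k\kleisli\varphi\le\varphi$, so under the standing hypothesis $\varphi\in(|X|\multimap\V)$ the remaining module condition $\varphi\kleisli a\le\varphi$ is indeed equivalent to $\varphi:X^\op\to\V$ being a $\Tth$-functor; and your ``bridge'' correctly aligns the infimum defining $\fspstr{m_X}{\hom_\xi}(T\yoneda(\frx),\varphi)$ with the supremum defining $\Txi a$, using that the displayed square is a genuine pullback in $\SET$, that \BC\ turns its $T$-image into a weak pullback, and that $\ev\cdot(1_{TX}\times\yoneda)=a$. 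One small point: the parenthetical claim that \BC\ guarantees, for every pair $(\frY,\frx)$, the existence of some $\frw\in T(TX\times X)$ with the prescribed projections is both unnecessary (if no such $\frw$ exists then $\Txi a(\frY,\frx)=\bot$ and $(\ast)$ holds vacuously for that pair) and not obviously a consequence of \BC\ alone; it is better omitted.
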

Consequentely, we put $\hat{X}=(\hat{X},\hat{a})$ where \[\hat{X}=\{\psi\in(|X|\multimap\V)\mid \psi:X^\op\to\V\text{ is a $\Tth$-functor}\}\] considered as a subcategory of $|X|\multimap \V$. In particular, $\yoneda:X\to\hat{X}$ is full and faithful.
\begin{example}\label{Psh_vs_Filter}
For $X\in\Cat{\Uth_\two}\cong\TOP$, $\psi\in\hat{X}$ is the characteristic function of a Zariski closed and down-closed subset $\calA\subseteq UX$ (see Examples \ref{Zariski_down}).  We will give now an alternative description of $\hat{X}$, as the set $F_0(X)$ of (possibly improper) filters on the lattice $\tau$ of open sets of X, in terms of the bijective maps
\begin{align*}
\hat{X}\xrightarrow{\;\Phi\;} F_0(X) &&\text{and}&& F_0(X)\xrightarrow{\;\Pi\;}\hat{X},
\end{align*}
where $\Phi(\calA)=\bigcap\calA\cap\tau$ and $\Pi(\frf)=\{\frx\in UX\mid \frf\subseteq\frx\}$. Clearly, $\calA=\Pi(\frf)$ is Zariski closed. If $\frx\le\fry$ for some $\frx\in UX$ and $\fry\in\calA$, then, for each $A\in\frx$ and $B\in\frf$, we have
\[
\overline{A}\cap B\neq\emptyset
\]
which, since $B$ is open, gives $A\cap B\neq \emptyset$. Hence $\frf\subseteq\frx$, that is, $\frx\in\calA$. Furthermore, one easily sees that $\frf=\Phi\Pi(\frf)$ and $\calA\subseteq\Pi\Phi(\calA)$. On the other hand, for $\frx\supseteq\bigcap\calA\cap\tau$ and $A\in\frx$ we have $X\setminus\overline{A}\notin\bigcap\calA$, and therefore $X\setminus\overline{A}\notin\frx$ for some $\frx\in\calA$, hence $\overline{A}\in\frx$. Consequently, $\overline{A}\subseteq\bigcup\calA$ and, since $\calA$ is Zariski closed, $\frx\le\fry$ for some $\fry\in\calA$. But $\calA$ is also down-closed, hence $\frx\in\calA$.
In a similar way (in fact, even easier) one can show that there are bijective maps
\begin{align*}
\check{X}\xrightarrow{\;\Phi'\;} F_1(X) &&\text{and}&& F_1(X)\xrightarrow{\;\Pi'\;}\check{X},
\end{align*}
where $\check{X}=\{\calA\subseteq UX\mid \calA\text{ is Zariski closed and up-closed}\}$, $F_1(X)$ is the set of all (possibly improper) filters on the lattice $\sigma$ of closed sets of X, $\Phi'(\calA)=\bigcap\calA\cap\sigma$ and $\Pi'(\frf)=\{\frx\in UX\mid \frf\subseteq\frx\}$. Furthermore, for any $\calA\subseteq UX$ Zariski-closed, its down-closure $\downc\calA$ is Zariski-closed as well. To see this, let $\frx\in\cl(\downc\calA)$. Hence $\frx\in\bigcup\downc\calA$ and therefore, for any $A\in\frx$, we have $\overline{A}\in\bigcup\calA$. Define
\[
\frj=\{B\subseteq X\mid\forall\fra\in\calA\,.\,B\notin\fra\}.
\]
Then $\frj$ is an ideal, and $\frj\cap\{\overline{A}\mid A\in\frx\}=\varnothing$. Hence there is some $\fry\in UX$ such that $\frx\le\fry$ and $\frj\cap\fry=\varnothing$. But the latter fact gives us $\fry\subseteq\bigcup\calA$, that is, $\fry\in\cl\calA=\calA$. We conclude $\frx\in\downc\calA$. With a similar proof one can show that $\upc\calA$ is Zariski closed for each Zariski closed subset $\calA\subseteq UX$ (but now use $\frx\in\cl(\upc\calA)\iff\bigcap\upc\calA\subseteq\frx$).\\
The topology in $\hat{X}$ is the \emph{compact-open topology}, which has as basic open sets
\begin{align*}
B(\calB,\{0\})=\{\calA\in\hat{X}\mid \calA\cap\calB=\varnothing\},&&\text{$\calB\subseteq UX$ Zariski closed.}
\end{align*}
Since $B(\calB,\{0\})=B(\upc\calB,\{0\})$, it is enough to consider Zariski closed and up-closed subsets $\calB\subseteq UX$. Hence, using the bijections $\hat{X}\cong F_0(X)$ and $\check{X}\cong F_1(X)$, $F_0(X)$ has
\begin{align*}
\{\frf\in F_0(X)\mid \exists A\in\frf,B\in\frg\,.\,A\cap B=\varnothing\}&&(\frg\in F_1(X))
\end{align*}
as basic open sets. Clearly, it is enough to consider $\frg=\doo{B}$ the principal filter induced by a closed set $B$, so that all sets
\begin{align*}
\{\frf\in F_0(X)\mid \exists A\in\frf\,.\,A\cap B=\varnothing\}=\{\frf\in F_0(X)\mid X\setminus B\in\frf\}&& \text{($B\subseteq X$ closed)}
\end{align*}
form a basis for the topology on $F_0(X)$. But this is precisely the topology on $F_0(X)$ considered in \cite{Esc_InjSp}.
\end{example}

\subsection{L-separation}
We call a $\Tth$-category $X=(X,a)$ \emph{L-separated} whenever the ordered set $\Cat{\Tth}(Y,X)$ is anti-symmetric, for each $\Tth$-category $Y$. The full subcategory of $\Cat{\Tth}$ consisiting of all L-separated $\Tth$-categories is denotd by $\CatSep{\Tth}$.
\begin{proposition}
Let $X=(X,a)$ be a $\Tth$-category. Then the following assertions are equivalent.
\begin{eqcond}
\item $X$ is L-separated.
\item $x\cong y$ implies $x=y$, for all $x,y\in X$.
\item For all $x,y\in X$, if $a(e_X(x),y)\ge k$ and $a(e_X(y),x)\ge k$, then $x=y$.
\item $\yoneda:X\to\hat{X}$ is injective.
\end{eqcond}
\end{proposition}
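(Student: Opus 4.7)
My plan follows the pattern of Proposition~\ref{EqCondSepV}, establishing (i)$\Leftrightarrow$(ii)$\Leftrightarrow$(iii)$\Leftrightarrow$(iv). The key preliminary that enables a reduction to elements is that, because $T1=1$, each element $x\in X$ can be identified with a $\Tth$-functor $x:E\to X$: naturality of $e$ together with $T1=1$ gives $Tx(T\star)=e_X(x)$, so the functoriality condition for $x:E=(1,k)\to(X,a)$ reduces to $k\le a(e_X(x),x)$, which is just reflexivity of $X$. With this in hand, (i)$\Rightarrow$(ii) is immediate from anti-symmetry of $\Cat{\Tth}(E,X)$. For (ii)$\Rightarrow$(i), if $f\cong g:Y\to X$, so that $f_*=g_*$ in $\Mod{\Tth}$, then for every $y\in Y$, viewed as $y:E\to Y$, functoriality of $(-)_*:\Cat{\Tth}\to\Mod{\Tth}$ yields
\[(f\cdot y)_*=f_*\kleisli y_*=g_*\kleisli y_*=(g\cdot y)_*,\]
whence $f(y)\cong g(y)$ as elements of $X$; applying (ii) pointwise then forces $f=g$.

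For (ii)$\Leftrightarrow$(iii) I invoke the lemma preceding the proposition: applied to $x,y:E\to X$ it reads $x\le y$ iff $k\le a(e_X(x),y)$, and symmetrically for $y\le x$, so $x\cong y$ is equivalent to the two inequalities in (iii). For (ii)$\Leftrightarrow$(iv), since $\yoneda=\mate{a}:X\to\hat{X}$ is the mate of the $\Tth$-functor $a:|X|\otimes X\to\V$, we have $\yoneda(x)(\frz)=a(\frz,x)=x^*(\frz,\star)$ for every $\frz\in TX$. Therefore $\yoneda(x)=\yoneda(y)$ iff $x^*=y^*$ iff $x\cong y$, so injectivity of $\yoneda$ exactly encodes (ii).

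The main (and only mild) obstacle is the preliminary step of identifying elements of $X$ with $\Tth$-functors $E\to X$, which leans on $T1=1$; once this is established, the functorial identity $(f\cdot y)_*=f_*\kleisli y_*$ and the description of $\yoneda$ via the mate of $a$ make the remaining implications formally identical to the $\V$-case proof in Proposition~\ref{EqCondSepV}.
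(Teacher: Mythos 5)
Your proof is correct and follows essentially the same route as the paper, which simply defers to the argument of Proposition~\ref{EqCondSepV}: reduction to elements $x:E\to X$ (your justification via $T1=1$ is a valid elaboration of a point the paper leaves implicit), the pointwise-order lemma for (ii)$\Leftrightarrow$(iii), and the identification $\yoneda(x)=x^*$ for (ii)$\Leftrightarrow$(iv). No gaps.
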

\begin{proof}
As for Proposition \ref{EqCondSepV}.
\end{proof}
\begin{corollary}
\begin{enumerate}
\item The $\Tth$-category $\V=(\V,\hom_\xi)$ is separated.
\item For all $\Tth$-categories $Y=(Y,b)$ and $X=(X,a)$ where $Y$ is L-separated and $a\cdot \Txi a=a\cdot m_X$, $Y^X$ is L-separated. In particular, $|X|\multimap\V$ is L-separated, for each $\Tth$-category $X$.
\item Any subcategory of a L-separated $\Tth$-category is L-separated. In particular, $\hat{X}$ is L-separated, for each $\Tth$-category $X$.
\end{enumerate}
\end{corollary}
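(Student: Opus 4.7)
The plan is to prove each of the three items by mimicking the structure of Proposition \ref{EqCondSepV}: in every case I reduce L-separation to characterisation (iii) of the immediately preceding proposition, namely the assertion that $k\le a(e_X(x),y)$ and $k\le a(e_X(y),x)$ together force $x=y$. So the whole corollary reduces to verifying that this pointwise condition behaves well under the three constructions.

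For (1), I would unfold the definition of $\hom_\xi$ on arguments of the form $e_\V(u)$: the theory axiom $\xi\cdot e_\V=1_\V$ collapses $\hom_\xi(e_\V(u),v)$ to $u\multimap v$. The two symmetric inequalities $k\le u\multimap v$ and $k\le v\multimap u$ then read as $u\le v$ and $v\le u$ respectively, giving $u=v$.

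For (2), the key step is to apply Lemma \ref{FunSp_PrinEl} to $\fspstr{a}{b}(e_{Y^X}(h),h')$, which rewrites this value as the meet $\bigwedge_{x\in X}b(e_Y(h(x)),h'(x))$. The L-separation hypothesis $h\cong h'$ then yields $k\le b(e_Y(h(x)),h'(x))$ and $k\le b(e_Y(h'(x)),h(x))$ for every $x\in X$, and invoking L-separation of $Y$ pointwise delivers $h(x)=h'(x)$ for all $x$, hence $h=h'$. The ``in particular'' assertion for $|X|\multimap\V$ reduces to verifying that $|X|=(TX,m_X)$ satisfies the hypothesis $a\cdot\Txi a=a\cdot m_X$ needed for the exponential to exist; this is the one spot where a small check is required, but $\Txi m_X=Tm_X$ (since $m_X$ is a function) and monad associativity $m_X\cdot Tm_X=m_X\cdot m_{TX}$ settle it, after which part (1) supplies L-separation of $\V$.

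Finally, (3) is a direct check: a subcategory $M\hookrightarrow X$ inherits its structure by restriction of $a$, so the condition (iii) evaluated inside $M$ coincides verbatim with (iii) evaluated inside $X$, and L-separation of $X$ transfers to $M$. The ``in particular'' statement for $\hat{X}$ then follows at once because $\hat{X}$ was defined as a full subcategory of $|X|\multimap\V$, which is L-separated by part (2). I expect the only mild obstacle to be the bookkeeping in (2) — confirming that Lemma \ref{FunSp_PrinEl} is applicable and that the exponential $|X|\multimap\V$ is well-defined — but no genuinely new idea beyond the $\V$-categorical proof of Proposition \ref{EqCondSepV} is required.
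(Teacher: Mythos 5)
Your proof is correct and follows exactly the route the paper intends: the paper leaves this corollary without an explicit proof (mirroring the $\V$-case argument), and your reduction to condition (iii) of the preceding proposition, the use of $\xi\cdot e_\V=1_\V$ for (1), of Lemma \ref{FunSp_PrinEl} for (2), and the monad-associativity check that $|X|$ satisfies $m_X\cdot\Txi m_X=m_X\cdot m_{TX}$ are precisely the steps needed. Nothing is missing.
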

\begin{examples}
A topological space is L-separated if and only if it is T$_0$, whereas an approach space $X=(X,d)$ with distance function $d:PX\times X\to\Pplus$ is L-separated if and only if
\[
d(\{x\},y)=0=d(\{y\},x)\;\Rw\;x=y
\]
for all $x,y\in X$.
\end{examples}

\subsection{L-completeness}
As in \ref{ComplV}, we call a $\Tth$-category $X=(X,a)$ \emph{L-complete} if every adjunction $\varphi\dashv\psi$ with $\varphi:Z\kmodto X$ and $\psi:X\kmodto Z$ is of the form $f_*\dashv f^*$ for a $\Tth$-functor $f:Z\to X$. Of course, $f$ is up to equivalence uniquely determined by $\varphi\dashv\psi$, and is indeed unique if $X$ is L-separated. As before, it is enough to consider $Z=E$ (see also \cite{CH_Compl})
\begin{proposition}
Let $X=(X,a)$ be a $\Tth$-category. The following assertions are equivalent.
\begin{eqcond}
\item $X$ is L-complete.
\item Each left adjoint $\Tth$-module $\varphi:E\kmodto X$ is of the form $\varphi=x_*$ for some $x$ in $X$.
\item Each right adjoint $\Tth$-module $\psi:X\kmodto E$ is of the form $\psi=x^*$ for some $x$ in $X$.
\end{eqcond}
\end{proposition}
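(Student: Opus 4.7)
The implications (i)$\Rw$(ii) and (i)$\Rw$(iii) are immediate by specializing the definition of L-completeness to $Z=E$. For (ii)$\Lrlw$(iii), I use that hom-sets in $\Mod{\Tth}$ are posets, so adjoints are unique: if a left adjoint $\varphi$ equals $x_*$ then its right adjoint must be $x^*$ (since $x_*\dashv x^*$), and symmetrically if a right adjoint $\psi$ equals $x^*$, its left adjoint must be $x_*$. It therefore suffices to prove (ii)$\Rw$(i).

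Given an adjunction $\varphi\dashv\psi$ in $\Mod{\Tth}$ with $\varphi: Z \kmodto X$ and $\psi: X \kmodto Z$, each $z\in Z$ is a $\Tth$-functor $z: E\to Z$ and so yields $z_*\dashv z^*$. Composing adjunctions gives $\varphi\kleisli z_* \dashv z^*\kleisli\psi$ as modules between $E$ and $X$; by (ii) (combined with the equivalent (iii)), there exists $f(z)\in X$ with $\varphi\kleisli z_* = f(z)_*$ and $z^*\kleisli\psi = f(z)^*$. The plan is to show that the resulting set map $f:Z\to X$ satisfies $\psi = f^*$: then $f^*$ inherits the module property from $\psi$, so $f$ is a $\Tth$-functor; combining $f_*\dashv f^*=\psi$ with the given $\varphi\dashv\psi$ and the uniqueness of adjoints already invoked for (ii)$\Lrlw$(iii), we conclude $\varphi=f_*$.

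The essential computation, writing $c$ for the structure on $Z$, is
\[
(z^*\kleisli\psi)(\frx,\star) \;=\; \bigvee_{\frX\in TTX,\, m_X(\frX)=\frx}\;\bigvee_{\fry\in TZ}\; \Txi\psi(\frX,\fry)\otimes c(\fry,z) \;=\; (c\kleisli\psi)(\frx,z) \;=\; \psi(\frx,z),
\]
where the last equality uses the module identity $c\kleisli\psi=\psi$. Since $f(z)^*(\frx,\star) = a(\frx,f(z)) = f^*(\frx,z)$ holds directly from the definitions of $f(z)^*$ and $f^*$, equating the two expressions yields $\psi(\frx,z) = f^*(\frx,z)$ for all $\frx\in TX$ and $z\in Z$, i.e.\ $\psi = f^*$ as $\Tth$-relations. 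The only non-routine step is the Kleisli computation above, which is just a matter of unwinding definitions and invoking $c\kleisli\psi=\psi$; everything else is formal bookkeeping, paralleling the $\V$-categorical argument.
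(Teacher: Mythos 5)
Your proof is correct and is exactly the argument the paper has in mind: the paper gives no proof here beyond the remark that ``as before, it is enough to consider $Z=E$'', and your reduction — compose the given adjunction $\varphi\dashv\psi$ with $z_*\dashv z^*$ for each point $z\in Z$, represent the resulting adjunction $E\kmodto X$ by an element $f(z)$, and verify $\psi=f^*$ via $z^*\kleisli\psi=z^\circ\cdot\psi$ — is the standard way to fill that in. The one detail worth flagging explicitly is that selecting $f(z)$ for every $z\in Z$ requires the Axiom of Choice (the representing element is unique only up to $\cong$ unless $X$ is L-separated), a point the paper makes explicitly for the $\V$-categorical version of this proposition.
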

A topological space is L-complete precisely if it is weakly-sober, that is, if each irreducible closed set is the closure of a point. A similar result holds for approach spaces: L-completeness is equivalent to each irreducible closed variable set $A$ is representable (see \cite{CH_Compl} for details). Furthermore, in both cases we have that L-complete and L-separated (approach respectively topological) spaces are precisely the fixed objects of the dual adjunction with the category of (approach) frames induced by $\V=\two$ respectively $\V=\Pplus$ (see \cite{Olm_AppFrame} for details about the approach case).

For a pair $\psi:X\kmodto Y$ and $\varphi:Y\kmodto X$ of adjoint $\Tth$-modules $\varphi\dashv\psi$, the same calculation as in \ref{ComplV} shows that $\varphi=1_X^*\homkleislileft\psi$. Since for each $\Tth$-module $\psi:X\kmodto Y$ we have $(1_X^*\homkleislileft\psi)\kleisli\psi\le1_X^*$, $\psi$ is right adjoint if and only if $\psi\kleisli(1_X^*\homkleislileft\psi)\ge(1_Y)_*$. Considering in particular $Y=E$, a $\Tth$-module $\psi:X\kmodto E$ is right adjoint if and only if
\[
k\le\bigvee_{\frx\in TX}\psi(\frx)\otimes\xi\cdot T\varphi(\frx)
\]
where $\varphi=1_X^*\homkleislileft\psi$. Note that $\bigvee\{\xi\cdot T\psi(\frX)\mid \frX\in TTX,m_X(\frX)=\frx\}=\psi(\frx)$ since $\psi:|X|\to\V$ is a $\Tth$-functor, hence, with the help of Lemma \ref{FunSp_PrinEl}, we see that
\begin{align*}
\varphi(x)&=\bigwedge_{\frx\in TX}\left(\left(\bigvee_{\substack{\frX\in TTX,\\m_X(\frX)=\frx}}\xi\cdot T\psi(\frX)\right)\multimap a(\frx,x)\right)\\
&=\bigwedge_{\frx\in TX}(\psi(\frx)\multimap a(\frx,x))\\
&=\hat{a}(e_{\hat{X}}(\psi),\yoneda(x)).
\end{align*}
\begin{lemma}\label{XiPhi}
Let $\psi:X\kmodto E$ be a $\Tth$-module and put $\varphi=1_X^*\homkleislileft\psi$. Then, for each $\frx\in TX$,
\[
\xi\cdot T\varphi(\frx)=\Txi\hat{a}(e_{T\hat{X}}\cdot e_{\hat{X}}(\psi),T\yoneda(\frx)).
\]
\end{lemma}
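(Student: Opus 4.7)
The plan is to reduce the identity to a pointwise statement about $\Txi$ at a ``unit'' first argument. The computation preceding the statement already gives $\varphi(x)=\hat{a}(e_{\hat{X}}(\psi),\yoneda(x))$, so $\varphi$ factors as $\varphi=h\circ\yoneda$, where $h:\hat{X}\to\V$ is defined by $h(f)=\hat{a}(e_{\hat{X}}(\psi),f)$. Applying $T$ and post-composing with $\xi$ then gives $\xi\cdot T\varphi(\frx)=\xi\cdot Th(T\yoneda(\frx))$, so it suffices to establish
\[
\xi\cdot Th(\fry)=\Txi\hat{a}(e_{T\hat{X}}(e_{\hat{X}}(\psi)),\fry)
\]
for every $\fry\in T\hat{X}$; specialising to $\fry:=T\yoneda(\frx)$ will then yield the lemma.

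I would extract this as a general fact about the Kan-extension operator: for any $\V$-relation $r:X\times Y\to\V$, any $x_0\in X$, and any $\fry\in TY$,
\[
\Txi r(e_X(x_0),\fry)=\xi\cdot T(r\circ\langle x_0,1_Y\rangle)(\fry).
\]
The $\ge$ direction follows at once from the definition of $\Txi r$ as a left Kan extension by plugging in the element $\frw:=T\langle x_0,1_Y\rangle(\fry)$ of the supremum. For $\le$, I would invoke \BC\ applied to the pullback square
\[
\xymatrix{Y\ar[r]^-{\langle x_0,1_Y\rangle}\ar[d]_{!} & X\times Y\ar[d]^{\pi_1}\\ 1\ar[r]_{x_0} & X}
\]
Together with $T1=1$ and the naturality identity $Tx_0=e_X\cdot x_0$, this forces every $\frw\in T(X\times Y)$ with $T\pi_1(\frw)=e_X(x_0)$ to lie in the image of $T\langle x_0,1_Y\rangle$, say $\frw=T\langle x_0,1_Y\rangle(\fry')$; composing with $T\pi_2$ yields $\fry'=T\pi_2(\frw)$. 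Hence requiring additionally $T\pi_2(\frw)=\fry$ pins down $\frw=T\langle x_0,1_Y\rangle(\fry)$ uniquely, and the defining supremum of $\Txi r$ collapses to a single summand, which is precisely $\xi\cdot T(r\circ\langle x_0,1_Y\rangle)(\fry)$.

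Specialising to $X:=T\hat{X}$, $Y:=\hat{X}$, $r:=\hat{a}$, $x_0:=e_{\hat{X}}(\psi)$ and $\fry:=T\yoneda(\frx)$ then gives
\[
\Txi\hat{a}(e_{T\hat{X}}(e_{\hat{X}}(\psi)),T\yoneda(\frx))=\xi\cdot T(h\circ\yoneda)(\frx)=\xi\cdot T\varphi(\frx),
\]
which is the asserted equality. The main obstacle is the weak-pullback argument in the second step: verifying that \BC\ together with $T1=1$ makes the fibre of $\can=\langle T\pi_1,T\pi_2\rangle$ over $(e_X(x_0),\fry)$ a singleton. This is the only place where the topological-theory hypotheses really intervene; the remainder is formal manipulation with the Yoneda factorisation of $\varphi$.
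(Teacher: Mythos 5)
Your proof is correct, and it follows the same basic strategy as the paper's: apply $T$ (resp.\ $\Txi$) to the pointwise identity $\varphi(x)=\hat{a}(e_{\hat{X}}(\psi),\yoneda(x))$ established just before the lemma. The difference is in how the key step is justified. The paper's proof is a one-liner: it observes that $\xi\cdot T\varphi=\Txi\varphi$ (since $\varphi$ is a $\V$-relation out of the singleton, the comparison map $T(E\times X)\to TE\times TX$ is a bijection) and then invokes the functoriality of $\Txi:\Mat{\V}\to\Mat{\V}$, cited from \cite{Hof_TopTh}, to push $\Txi$ through the composite $\varphi=\yoneda^\circ\cdot\hat{a}\cdot(e_{\hat{X}}(\psi))_\circ$. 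You instead factor $\varphi=h\cdot\yoneda$ with $h=\hat{a}(e_{\hat{X}}(\psi),-)$ and prove from scratch the one instance of that functoriality you need, namely that the fibre of $\can$ over $(e_X(x_0),\fry)$ is the singleton $\{T\langle x_0,1_Y\rangle(\fry)\}$, so that the Kan-extension supremum defining $\Txi r(e_X(x_0),\fry)$ collapses. Your weak-pullback argument is sound: the square you exhibit is a genuine pullback in $\SET$, \BC\ for $T$ together with $T1=1$ and $Tx_0=e_X\cdot x_0$ gives surjectivity of $T\langle x_0,1_Y\rangle$ onto the fibre, and composing with $T\pi_2$ gives uniqueness; the nonemptiness of the fibre is supplied by your $\ge$ direction. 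What your route buys is self-containedness (the reader sees exactly where \BC\ and $T1=1$ enter) at the cost of redoing, in a special case, work the paper has already delegated to the cited properties of $\Txi$.
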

\begin{proof}
Since $\xi\cdot T\varphi(\frx)=\Txi\varphi(\frx)$, the result follows from applying $\Txi$ to the equality above.
\end{proof}
Hence we have
\begin{proposition}\label{AdjMod}
Let $X=(X,a)$ be $\Tth$-category. A $\Tth$-module $\psi:X\kmodto E$ is right adjoint if and only if 
\begin{equation}\label{tight}
k\le \bigvee_{\frx\in TX}\psi(\frx)\otimes \Txi\hat{a}(e_{T\hat{X}}\cdot e_{\hat{X}}(\psi),T\yoneda(\frx)).
\end{equation}
\end{proposition}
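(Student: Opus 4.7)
The proof plan is essentially to assemble the calculations that were carried out in the paragraph preceding the statement together with Lemma \ref{XiPhi}; no new idea is required beyond these.

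First I would recall the characterization of right adjointness that was already derived: a $\Tth$-module $\psi:X\kmodto Y$ is right adjoint if and only if its (necessarily unique) left adjoint is $\varphi = 1_X^* \homkleislileft \psi$, and this happens exactly when $\psi\kleisli(1_X^*\homkleislileft\psi)\ge (1_Y)_*$. Specialized to $Y=E$, where $(1_E)_*=k$, this condition becomes
\[
k\le \bigvee_{\frx\in TX}\psi(\frx)\otimes \xi\cdot T\varphi(\frx),
\]
with $\varphi = 1_X^* \homkleislileft \psi$. This is just the formula already displayed in the paragraph above the proposition, obtained by unravelling the Kleisli convolution of $\psi:X\krelto E$ with $\varphi:E\krelto X$.

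Second, I would apply Lemma \ref{XiPhi}, which computes
\[
\xi\cdot T\varphi(\frx) = \Txi\hat{a}(e_{T\hat{X}}\cdot e_{\hat{X}}(\psi), T\yoneda(\frx)).
\]
That lemma in turn rests on the identity $\varphi(x) = \hat{a}(e_{\hat{X}}(\psi),\yoneda(x))$ established just above, which itself uses the Yoneda Theorem \ref{Yoneda} (part (1) applied to $\psi\in\hat{X}$) to replace the infimum defining $1_X^*\homkleislileft\psi$ by a value of $\hat{a}$, combined with the fact that $\psi:|X|\to\V$ is a $\Tth$-functor (so that $\xi\cdot T\psi(\frX)$ recovers $\psi(m_X(\frX))$).

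Substituting the expression from Lemma \ref{XiPhi} into the previous inequality gives \eqref{tight}, which proves the proposition. The only step that required genuine work was the identification of $\varphi$ with a value of the $\Tth$-functor hom $\hat{a}$, and this has already been taken care of before the statement; the proof itself is therefore merely a citation of that identity and of the lemma, followed by substitution. No obstacle remains, so I would keep the proof to one or two sentences of the form ``combine the displayed reformulation of right adjointness with Lemma \ref{XiPhi}''.
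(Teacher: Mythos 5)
Your proposal follows exactly the paper's route: the paper gives no separate proof, simply deducing the proposition (``Hence we have'') from the displayed reformulation $k\le\bigvee_{\frx\in TX}\psi(\frx)\otimes\xi\cdot T\varphi(\frx)$ of right adjointness for $\varphi=1_X^*\homkleislileft\psi$, combined with Lemma \ref{XiPhi}. One minor attribution point: the identification $\bigwedge_{\frx\in TX}(\psi(\frx)\multimap a(\frx,x))=\hat{a}(e_{\hat{X}}(\psi),\yoneda(x))$ feeding into Lemma \ref{XiPhi} comes from Lemma \ref{FunSp_PrinEl} (on principal elements of the function space) rather than from Theorem \ref{Yoneda}, but this does not affect the correctness of your argument.
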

Given a $\Tth$-category $X=(X,a)$, we call a $\Tth$-functor $\psi:|X|\to\V$ \emph{tight} if $\psi:X^\op\to\V$ is a $\Tth$-functor and if, considered as a $\Tth$-module $\psi:X\kmodto E$, it is right adjoint, that is, if it satisfies \eqref{tight}.
\begin{example}
For a topological space $X$ and $\psi\in\hat{X}$, as before we can identify $\psi$ with a Zariski closed and down-closed subset $\calA\subseteq UX$, and then $1_X^*\homkleislileft\psi$ with
\[
A=\{x\in X\mid\forall\fra\in\calA\,.\,\fra\to x\}.
\]
Then $\psi$ is tight if, and only if, there exists some $\fra\in\calA$ with $A\in\fra$. Furthermore, under the bijection $\hat{X}\cong F_0(X)$ (see Example \ref{Psh_vs_Filter}), a tight map $\psi$ corresponds to a filters $\frf\in F_0(X)$ with $(\Lim\frf)\match\frf$, where $\Lim\frf$ denotes the set of all limit points of $\frf$ and $A\match\frg$ if $\forall B\in\frf\,.\,A\cap B\neq\varnothing$. Furthermore, for each $\frf\in F_0(X)$ we have
\[
(\Lim\frf)\match\frf\iff \frf\text{ is completely prime},
\]
that is, if $\bigcup_{i\in I}U_i\in\frf$, then $U_i\in\frf$ for some $i\in I$. In fact, if $(\Lim\frf)\match\frf$ and $\bigcup_{i\in I}U_i\in\frf$ for some family of open subsets of $X$, then $(\Lim\frf)\cap \bigcup_{i\in I}U_i\neq\varnothing$, and therefore, for some $i\in I$, $U_i$ contains a limit point of $\frf$. Hence $U_i\in\frf$. On the other hand, assume that $\frf$ is completely prime. Suppose that $U\in\frf$ does not contain a limit point of $\frf$. Then, for each $x\in U$, there is an open neiborhood $U_x$ of $x$ with $U_x\notin\frf$. But $\bigcup_{x\in X}U_x\in\frf$ and, since $\frf$ is completely prime, $U_x\in\frf$ for some $x\in U$, a contradiction.
\end{example}

\subsection{L-injectivity}
The notions of L-dense $\Tth$-functor, L-equivalence as well as L-injective $\Tth$-category can now be introduced as in \ref{LInjV}. More precise, we call a $\Tth$-functor $f:(X,a)\to(Y,b)$ \emph{L-dense} if $f_*\kleisli f^*=1_X^*$, which amounts to $b\cdot Tf\cdot Tf^\circ\cdot\Txi b\cdot m_Y^\circ= b$. L-dense $\Tth$-functors have the same composition-cancellation properties as $\V$-functors (see \ref{LInjV}). A fully faithful L-dense $\Tth$-functor is an \emph{L-equivalence}, which can be equivalentely expressed by saying that $f_*$ is an isomorphism in $\Mod{\Tth}$. A $\Tth$-category $Z$ is called \emph{pseudo-injective} if, for all $\Tth$-functors $f:X\to Z$ and fully faithful $\Tth$-functors $i:X\to Y$, there exists a $\Tth$-functor $g:Y\to Z$ such that $g\cdot i\cong f$. $Z$ is called \emph{L-injective} if this extension property is only required along L-equivalences $i:X\to Y$. Of course, for a L-separated $\Tth$-category $Z$, $g\cdot i\cong f$ implies $g\cdot i= f$, and then pseudo-injectivity coincides with the usual notion of injectivity. The following two results can be proven as in \ref{LInjV}
\begin{lemma}
The $\Tth$-category $\V$ is injective.
\end{lemma}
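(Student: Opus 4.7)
The plan is to mirror the proof of the $\V$-case lemma \emph{mutatis mutandis}, replacing relational composition by Kleisli convolution, and invoking Theorem \ref{CharTMod} in place of Proposition \ref{CharModV}. Let $i:X\to Y$ be a fully faithful $\Tth$-functor (so $i^*\kleisli i_*=1_X^*$) and let $\varphi:X\to\V$ be a $\Tth$-functor.

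The first step is to reinterpret $\varphi$ as a $\Tth$-module $\varphi:E\kmodto X$. Since our standing assumption gives $T1=1$, both $|E|$ and $E^\op$ are (essentially) $E$ itself, so via Theorem \ref{CharTMod} a $\Tth$-functor $X\to\V$ is the same datum as a $\Tth$-module $E\kmodto X$. Composing then yields the candidate extension $\psi:=i_*\kleisli\varphi:E\kmodto Y$, a $\Tth$-module. A second application of Theorem \ref{CharTMod} turns $\psi$ into a $\Tth$-functor $\psi:Y\to\V$, which will be our desired extension.

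To see that $\psi\cdot i\cong\varphi$ (in fact with equality), one computes in $\Mod{\Tth}$:
\[
i^*\kleisli\psi \;=\; i^*\kleisli i_*\kleisli\varphi \;=\; 1_X^*\kleisli\varphi \;=\;\varphi,
\]
using full-faithfulness of $i$ and the fact that $1_X^*=a$ is an identity for Kleisli convolution of $\Tth$-modules. Now translate this equality back into the language of $\Tth$-functors through the correspondence of Theorem \ref{CharTMod}. Using the identity $i^*\kleisli(-)=i^\circ\cdot(-)$ proven earlier in the paper, and using $TE=T1=1$, one sees that $i^*\kleisli\psi$ evaluated at $(\star,x)$ is simply $\psi(e_Y(i(x)))$, i.e.\ the $\Tth$-functor associated to $i^*\kleisli\psi$ is exactly $\psi\cdot i$. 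Hence $\psi\cdot i=\varphi$, and $\V$ is injective in $\Cat{\Tth}$.

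The main obstacle is not the computation itself but the careful bookkeeping around Theorem \ref{CharTMod}: one needs to use both of its equivalent characterizations (via $|X|\otimes Y\to\V$ and via $X^\op\otimes Y\to\V$) to know that $\psi$ really is a \emph{$\Tth$-functor} $Y\to\V$, and the identification of the module composite $i^*\kleisli\psi$ with the functor composite $\psi\cdot i$ depends crucially on $T1=1$; this degeneracy of $E$ is what makes the $\V$-case argument transport verbatim and, as in \ref{LInjV}, explains the explicit formula $\psi(\fry)=\bigvee_{\frx\in TX}\varphi(x)\otimes b(Ti(\frx),\fry)$ giving an effective construction of the extension.
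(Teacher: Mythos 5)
Your proof is correct and is essentially the paper's own argument: the paper proves this lemma simply by saying it goes ``as in \ref{LInjV}'', i.e.\ by transporting the factorization $\varphi = i^*\kleisli(i_*\kleisli\varphi)$ with $\psi = i_*\kleisli\varphi$ to the Kleisli convolution, exactly as you do, using Theorem \ref{CharTMod} and $TE=1$ to pass between modules $E\kmodto Y$ and $\Tth$-functors $Y\to\V$. (Only your closing explicit formula has a variable mismatch; it should read $\psi(y)=\bigvee_{\frx\in TX}\xi\cdot T\varphi(\frx)\otimes b(Ti(\frx),y)$.)
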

\begin{proposition}
For all $\Tth$-categories $Y=(Y,b)$ and $X=(X,a)$ where $Y$ is L-injective (pseudo-injective) and $a\cdot \Txi a=a\cdot m_X$, $Y^X$ is L-injective (pseudo-injective).
\end{proposition}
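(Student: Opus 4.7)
The plan is to imitate the $\V$-case argument at the end of Subsection~\ref{LInjV}, using the closed structure $X\otimes\_\dashv\_^X$ on $\Cat{\Tth}$, which is available precisely because of the assumption $a\cdot\Txi a=a\cdot m_X$. Given a fully faithful $\Tth$-functor $i:A\to B$ (respectively an L-equivalence) together with a $\Tth$-functor $\varphi:A\to Y^X$, I first form the mate $\umate{\varphi}:A\otimes X\to Y$. Assuming that $i\otimes 1_X:A\otimes X\to B\otimes X$ is again fully faithful (respectively an L-equivalence), pseudo-injectivity (respectively L-injectivity) of $Y$ produces $\umate{\psi}:B\otimes X\to Y$ with $\umate{\psi}\cdot(i\otimes 1_X)\cong\umate{\varphi}$. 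Taking mates yields $\psi:B\to Y^X$, and the equivalence transports across the mate bijection to $\psi\cdot i\cong\varphi$ by the final remark of the subsection on $\Tth$-modules, which applies because the parameter $X$ satisfies $a\cdot\Txi a=a\cdot m_X$.

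The one step where extra work beyond the $\V$-case is needed is the verification that $\_\otimes 1_X$ preserves fully faithfulness and L-density. For fully faithfulness, I unwind the pointwise formula \eqref{TensAlg} for the tensor structure on both $A\otimes X$ and $B\otimes X$, and use naturality $T\pi_1^B\cdot T(i\times 1_X)=Ti\cdot T\pi_1^A$ and $T\pi_2^B\cdot T(i\times 1_X)=T\pi_2^A$ to reduce the computation of $(i\otimes 1_X)^\circ\cdot c_{B\otimes X}\cdot T(i\otimes 1_X)$ on the first factor to $a_A(\frx,a)=b(Ti(\frx),i(a))$ (fully faithfulness of $i$), while the second factor contributes the same $a_X$ on both sides. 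For L-density one expands $(i\otimes 1_X)_*\kleisli(i\otimes 1_X)^*=c_{B\otimes X}\cdot T(i\otimes 1_X)\cdot T(i\otimes 1_X)^\circ\cdot\Txi c_{B\otimes X}\cdot m_{B\times X}^\circ$ and, again via \BC\ and the same projection naturalities, isolates the expression $b\cdot Ti\cdot Ti^\circ\cdot\Txi b\cdot m_B^\circ=b$ in the first factor (the L-density of $i$) while the second factor produces the trivial L-density identity $a_X=a_X$, giving the required lower bound $\ge c_{B\otimes X}$ (the reverse inequality $\le c_{B\otimes X}$ always holds since $i\otimes 1_X$ is a $\Tth$-functor).

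The main obstacle is thus this preservation lemma for $\_\otimes 1_X$: the bookkeeping involving $\Txi$ applied across product projections is genuinely more delicate than in the $\V$-setting, where $T=\Id$ and the corresponding fact was used without comment. Once this lemma is in hand, the rest of the proof follows verbatim from the $\V$-argument, combined with the mate calculus on $\Cat{\Tth}$-hom-sets recorded at the end of the subsection on $\Tth$-modules.
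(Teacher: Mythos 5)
Your proposal is correct and follows essentially the same route as the paper, which simply transfers the $\V$-argument (mate under the adjunction $X\otimes\_\dashv\_^X$, extend along $i\otimes 1_X$ using the (pseudo-/L-)injectivity of $Y$, and transport the resulting $\cong$ back through the mate bijection, the latter being legitimate precisely because $a\cdot\Txi a=a\cdot m_X$). The only difference is that you make explicit, and sketch a proof of, the preservation of full faithfulness and L-density by $\_\otimes 1_X$ (via \eqref{TensAlg}, projection naturality, \BC\ and $T1=1$), a step the paper uses without comment in both the $\V$- and $\Tth$-settings.
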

In particular, we obtain the injectivity of the $\Tth$-category $|X|\multimap\V$. Later on we will see that also $\hat{X}$ and $\tilde{X}$ are L-injective.

\section{L-closure}

\subsection{L-dense $\Tth$-functors}
As in \ref{LdenseV}, L-dense $\Tth$-functors can be characterized as ``epimorphisms up to $\cong$''. However, we will use here a slighly different proof.
\begin{lemma}\label{CharLdenseT}
Let $X=(X,a)$ be a $\Tth$-category, $M\subseteq X$ and $i:M\hrw X$ the embedding of $M$ into $X$. Then $i$ is dense if and only if
\begin{equation}\label{dense}
k\le\bigvee_{\fra\in TM}a(\fra,x)\otimes\Txi a(Te_X\cdot e_X(x),\fra)
\end{equation}
for all $x\in X$.
\end{lemma}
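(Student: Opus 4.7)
The plan is to identify condition (\ref{dense}) with the single pointwise inequality $e_X^\circ \le i_*\kleisli i^*$, and then to deduce the full equality $i_*\kleisli i^* = 1_X^* = a$ from this lower bound together with the fact that $i_*\kleisli i^*$ is a $\Tth$-module.

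First, I would compute $i_*\kleisli i^*$ pointwise. Writing $i^*$ as the matrix composite of $a$ with the injection $1_{TX}\times i:TX\times M\to TX\times X$, (BC) applied to the pullback square with vertices $TX\times M$, $TX\times X$, $M$, $X$ (where $1\times i$ is the pullback of $i:M\to X$ along $\pi_2:TX\times X\to X$) yields the identity $\Txi(i^*)(\frX,\fra)=\Txi a(\frX, Ti(\fra))$ for all $\frX\in TTX$ and $\fra\in TM$: any $\frw'\in T(TX\times X)$ with $T\pi_1(\frw')=\frX$ and $T\pi_2(\frw')=Ti(\fra)$ lifts to some $\frw\in T(TX\times M)$ with $T\pi_2(\frw)=\fra$ and $T(1\times i)(\frw)=\frw'$. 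Expanding the Kleisli convolution then gives
\[
(i_*\kleisli i^*)(\frx,x)=\bigvee_{\substack{\fra\in TM,\,\frX\in TTX\\ m_X(\frX)=\frx}} a(Ti(\fra),x)\otimes \Txi a(\frX, Ti(\fra)).
\]
The paper already observed (as a consequence of $T1=1$) that $m_X^\circ\cdot e_X = Te_X\cdot e_X$, so the only $\frX$ with $m_X(\frX)=e_X(x)$ is $\frX = Te_X\cdot e_X(x)$. Hence, identifying $\fra\in TM$ with $Ti(\fra)\in TX$ as the paper does, the value $(i_*\kleisli i^*)(e_X(x),x)$ is exactly the right-hand side of (\ref{dense}), and so (\ref{dense}) is equivalent to $e_X^\circ \le i_*\kleisli i^*$.

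Granting this, the direction ``L-dense $\Rightarrow$ (\ref{dense})'' is immediate, since L-density gives $i_*\kleisli i^* = a$ and $k\le a(e_X(x),x)$. For the converse, set $\phi:=i_*\kleisli i^*$. As the Kleisli composite of the two $\Tth$-modules $i^*$ and $i_*$, $\phi$ is itself a $\Tth$-module, so $a\kleisli\phi=\phi$. The inequality $Ti\cdot Ti^\circ\le 1_{TX}$ gives $\phi\le a\kleisli a = a$, while the monad identity $m_X\cdot Te_X = 1_{TX}$ gives $a\kleisli e_X^\circ = a\cdot Te_X^\circ\cdot m_X^\circ = a$, so the hypothesis $e_X^\circ\le\phi$ yields $a = a\kleisli e_X^\circ \le a\kleisli\phi = \phi$. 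Hence $\phi=a$, i.e.\ $i$ is L-dense. The only genuinely non-formal step is the pointwise computation of $\Txi(i^*)$, which uses (BC) essentially; everything else is routine manipulation in $\UMat{\Tth}$ together with the unit/multiplication laws of $\mT$.
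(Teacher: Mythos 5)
Your proof is correct. The forward direction is the same as the paper's: evaluate the density inequality at $\frx=e_X(x)$ and use $m_X^\circ\cdot e_X=e_{TX}\cdot e_X$ to see that the join over $\frX\in m_X^{-1}(e_X(x))$ collapses to the single term $\frX=Te_X\cdot e_X(x)$. For the converse, however, you take a genuinely different route. The paper argues elementwise: it inserts the density term at $x$ into $a(\frx,x)$, applies the op-lax unit inequality $a(\frx,x)\le\Txi a(e_{TX}(\frx),e_X(x))$ together with transitivity of $\Txi a$ to absorb the extra factor, and recognizes the result as a summand of $i_*\kleisli i^*(\frx,x)$. You instead recast condition \eqref{dense} as $e_X^\circ\le\phi$ with $\phi=i_*\kleisli i^*$ and conclude by the purely formal chain $a=a\kleisli e_X^\circ\le a\kleisli\phi=\phi\le a$, relying on the unit law $m_X\cdot Te_X=1_{TX}$, monotonicity of the Kleisli convolution, and the fact that $\phi$ is a $\Tth$-module (equivalently, $a\kleisli i_*=i_*$ plus associativity of $\kleisli$). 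Unwinding your module identities reproduces exactly the paper's transitivity and unit manipulations, so the mathematical content is the same; what your version buys is brevity and reuse of facts already established (that $\Mod{\Tth}$ is a category with $a$ as identity on $X$), while the paper's computation is self-contained and does not presuppose associativity of the Kleisli convolution on these particular composites. Your preliminary identification $\Txi(i^*)(\frX,\fra)=\Txi a(\frX,Ti(\fra))$ via \BC\ is also correct, and is in any case immediate from the stated properties $\Txi(f^\circ)=Tf^\circ$ and the 2-functoriality of $\Txi$.
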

\begin{proof}
Recall that $i$ is dense whenever $i_*\kleisli i^*\ge a$, that is,
\[
a(\frx,x)\le\bigvee_{\fra\in TM}\bigvee_{\substack{\frX\in TTX\\m_X(\frX)=\frx}}a(\fra,x)\otimes\Txi a(\frX,\fra)
\]
for all $\frx\in TX$ and $x\in X$. If $i$ is dense, then \eqref{dense} follows from the inequality above by putting $\frx=e_X(x)$ and using $m_X^\circ\cdot e_X=e_{TX}\cdot e_X$ (see Subsection \ref{Topth}). On the other hand, from \eqref{dense} we obtain
\begin{align*}
a(\frx,x)
&\le \bigvee_{\fra\in TM}a(\fra,x)\otimes\Txi a(Te_X\cdot e_X(x),\fra)\otimes a(\frx,x)\\
&\le \bigvee_{\fra\in TM}a(\fra,x)\otimes \Txi\Txi a(e_{TTX}\cdot e_{TX}(\frx),e_{TX}\cdot e_X(x))\otimes \Txi a(Te_X\cdot e_X(x),\fra)\\
&\le \bigvee_{\fra\in TM}a(\fra,x)\otimes \Txi a(e_{TX}(\frx),\fra)\\
&\le \bigvee_{\fra\in TM}\bigvee_{\substack{\frX\in TTX\\m_X(\frX)=\frx}}a(\fra,x)\otimes\Txi a(\frX,\fra).\qedhere
\end{align*}
\end{proof}

\begin{proposition}
For a $\Tth$-functor $i:M\to X$, the following assertions are equivalent.
\begin{eqcond}
\item $i:M\to X$ is L-dense.
\item For all $\Tth$-functors $f,g:X\to Y$, with $f\cdot i=g\cdot i$ one has $f\cong g$.
\item For all $\Tth$-functors $f,g:X\to \V$, with $f\cdot i=g\cdot i$ one has $f= g$.
\end{eqcond}
\end{proposition}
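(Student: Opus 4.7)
The plan is to establish the cycle (i)\,$\Rw$\,(ii)\,$\Rw$\,(iii)\,$\Rw$\,(i). The first two implications are short: from $f\cdot i=g\cdot i$ one obtains $f_*\kleisli i_*=g_*\kleisli i_*$, and post-composition with $i^*$ combined with the L-density hypothesis $i_*\kleisli i^*=1_X^*$ yields $f_*=g_*$, i.e.\ $f\cong g$; this gives (i)\,$\Rw$\,(ii). For (ii)\,$\Rw$\,(iii), the remark preceding Lemma \ref{CharLdenseT} shows that for $\Tth$-functors valued in $\V$ the order $\le$ is pointwise, so $f\cong g$ collapses to $f=g$.

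The substantive content is (iii)\,$\Rw$\,(i), and the plan there is a Yoneda-style reduction. Both $a=1_X^*$ and $i_*\kleisli i^*$ are $\Tth$-modules $X\kmodto X$; by Theorem \ref{CharTMod} each is in particular a $\Tth$-functor $|X|\otimes X\to\V$, and since $|X|$ is a free $\mT$-algebra the exponential discussed in \ref{TCat} is available, so transposition yields $\Tth$-functors $\mu_1,\mu_2:X\to|X|\multimap\V$, with $\mu_1=\yoneda$. Full faithfulness of $i$ (giving $i^*\kleisli i_*=1_M^*$) delivers
\[
(i_*\kleisli i^*)\kleisli i_*=i_*\kleisli(i^*\kleisli i_*)=i_*=1_X^*\kleisli i_*,
\]
which, by naturality of the transposition in the second variable, translates exactly into $\mu_1\cdot i=\mu_2\cdot i$.

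It remains to upgrade this agreement on $M$ to $\mu_1=\mu_2$, and here (iii) enters coordinatewise. For each $\frx\in TX$, the constant map $\frx:E\to|X|$ is a $\Tth$-functor (every point of a $\mT$-algebra is, using $T1=1$), so precomposing the transpose of $\mu_k$ with $1_X\otimes\frx:X\otimes E\to X\otimes|X|$ produces a $\Tth$-functor $\mu_{k,\frx}:X\to\V$ given by $y\mapsto\mu_k(y)(\frx)$. From $\mu_1\cdot i=\mu_2\cdot i$ we obtain $\mu_{1,\frx}\cdot i=\mu_{2,\frx}\cdot i$, so (iii) yields $\mu_{1,\frx}=\mu_{2,\frx}$ for every $\frx\in TX$. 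Hence $\mu_1=\mu_2$, which transposes back to $a=i_*\kleisli i^*$, so $i$ is L-dense.

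The delicate part will be the bookkeeping around the two faces of a $\Tth$-module: verifying that the transposition between modules $X\kmodto X$ and $\Tth$-functors $X\to|X|\multimap\V$ is natural with respect to precomposition by $i$, and checking that the ``evaluation at $\frx$'' really produces a morphism in $\Cat{\Tth}$ rather than merely in $\Cat{\V}$. Both reduce to the closed structure of Subsection \ref{TCat} together with the observation that points of $\mT$-algebras are $\Tth$-functors out of $E$, so the argument should go through without surprises.
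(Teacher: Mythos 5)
Your overall strategy coincides with the paper's: the same cycle (i)\,$\Rw$\,(ii)\,$\Rw$\,(iii)\,$\Rw$\,(i), with the first two implications handled exactly as in the paper, and the hard direction (iii)\,$\Rw$\,(i) reduced to producing two $\V$-valued $\Tth$-functors on $X$ that agree on $M$ and applying (iii). The paper does this pointwise (for fixed $x$ it compares $y\mapsto a(e_X(x),y)$ with $y\mapsto\bigvee_{\frx\in TM}\Txi a(Te_X\cdot e_X(x),\frx)\otimes a(\frx,y)$, checks these are $\Tth$-functors via \cite[Lemma~6.8]{Hof_TopTh}, and then invokes Lemma \ref{CharLdenseT}); you do it uniformly via the mates $\mu_1,\mu_2:X\to(|X|\multimap\V)$ and the evaluation $\Tth$-functors $\ev_\frx$. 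That packaging is legitimate and arguably cleaner, since Theorem \ref{CharTMod} replaces the external lemma and you prove the full identity $a=i_*\kleisli i^*$ directly rather than only at the units $e_X(x)$.

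There is, however, a concrete error in your key displayed step. For a module $\varphi:X\kmodto X$ one has $\varphi\kleisli i_*=\varphi\cdot Ti$ and $i^*\kleisli\varphi=i^\circ\cdot\varphi$: composing with $i_*$ on the \emph{right} reindexes the contravariant variable along $Ti:TM\to TX$, whereas it is composition with $i^*$ on the \emph{left} that restricts the covariant variable to $i(M)$. Since $\mate{\varphi}\cdot i$ sends $z$ to $\varphi(-,i(z))$, the statement $\mu_1\cdot i=\mu_2\cdot i$ is equivalent to $i^*\kleisli(i_*\kleisli i^*)=i^*\kleisli 1_X^*$, i.e.\ to $i^*\kleisli i_*\kleisli i^*=i^*$ --- not to $(i_*\kleisli i^*)\kleisli i_*=1_X^*\kleisli i_*$, which is a statement about elements of $TM$ and does not give what you need. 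The repair is immediate (the correct identity follows from $i^*\kleisli i_*=1_M^*$ just as easily), but as written the translation fails. A second, smaller gap: the proposition assumes only that $i$ is a $\Tth$-functor, so before invoking full faithfulness you must reduce to the inclusion $j:i(M)\hrw X$; this is harmless because (iii) for $i$ implies (iii) for $j$, and $i_*\kleisli i^*=j_*\kleisli j^*$ since $e_*\kleisli e^*=1^*$ for the surjective part $e$ of the factorization --- but the reduction should be stated, as the paper does.
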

\begin{proof}
Assume first (i), i.e.\ $i:M\to X$ is L-dense. Then, from $f\cdot i=g\cdot i$ we obtain $f_*=g_*$ since $i_*\kleisli i^*=1_X^*$. The implication (ii)$\Rw$(iii) is trivially true. Assume now (iii). According to the remarks made above, we can assume that $i:M\to X$ is the embedding of a subset $M\subseteq X$. Let $x\in X$. First note that
\[
\varphi:X\to\V,\;y\mapsto a(e_X(x),y)
\]
$\Tth$-functor since $a:|X|\otimes X\to\V$ is so. Using the same argument as in \cite[Lemma 6.8]{Hof_TopTh}, we see that also 
\[
\psi:X \to\V,\;y\mapsto\bigvee_{\frx\in TM}\Txi a(Te_X\cdot e_X(x),\frx)\otimes a(\frx,y)
\]
is a $\Tth$-functor. Clearly, for each $y\in X$ we have $\psi(y)\le\varphi(y)$. If $y\in M$, we can choose $\frx=e_X(y)\in TM$ and therefore, using $Te_X\cdot e_X=e_{TX}\cdot e_X$ and op-laxness of $e$, obtain $\varphi(y)\le\psi(y)$. Hence $\varphi|_M=\psi|_M$, and from our assumption (iii) we deduce $k\le\varphi(x)=\psi(x)$.
\end{proof}

\subsection{L-closure}
For a $\Tth$-category $X=(X,a)$ and $M\subseteq X$, we define the \emph{L-closure} of $M$ in $X$ by
\[
\overline{M}=\{x\in X\mid \forall f,g:X\to Y\,.\,(f|_M=g|_M\;\Rw\;f(x)\cong g(x))\}.
\]
Hence $\overline{M}$ is the largest subset $N$ of $X$ making the inclusion map $i:M\hrw N$ dense.
\begin{proposition}\label{CharLClsT}
Let $X=(X,a)$ be a $\Tth$-category, $M\subseteq X$ and $x\in X$. Then the following assertions are equivalent.
\begin{eqcond}
\item $x\in\overline{M}$.
\item $k\le\bigvee_{\frx\in TM}a(\frx,x)\otimes\Txi a(Te_X\cdot e_X(x),\frx)$
\item $i^*\kleisli x_*\dashv x^*\kleisli i_*$, where $i:M\hrw X$ is the inlcusion map.
\item $1_E^*\le x^*\kleisli i_*\kleisli i^*\kleisli x_*$, 
\item $i^*\kleisli x_*\dashv x^*\kleisli i_*$.
\item $x_*:E\modto X$ factors through $i_*:M\modto X$ by a map $\varphi:E\modto M$ in $\Mod{\Tth}$. 
\end{eqcond}
\end{proposition}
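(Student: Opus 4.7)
The plan is to follow the same cycle of implications as in the proof of Proposition \ref{CharLClsV}, namely (i)$\Rw$(ii)$\Rw$(iv)$\Rw$(v)$\Rw$(vi)$\Rw$(i), noting that conditions (iii) and (v) in the statement coincide (so that the cycle covers both). All the $\V$-categorical manipulations transfer to the $\Tth$-setting by replacing ordinary relational composition with Kleisli convolution and by invoking Lemma \ref{CharLdenseT} in place of the defining equality $m_*\cdot m^*=1_X^*$ for L-dense $\V$-functors. I only comment on the genuinely new input.

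For (i)$\Rw$(ii) I would apply Lemma \ref{CharLdenseT} to the inclusion $\iota:M\hrw\overline{M}$, which by definition of the L-closure is L-dense in the ambient $\Tth$-category $\overline{M}$. The lemma then yields, for every $x\in\overline{M}$, the inequality
\[
k\le\bigvee_{\fra\in TM}a_{\overline{M}}(T\iota(\fra),x)\otimes\Txi a_{\overline{M}}(Te_{\overline{M}}\cdot e_{\overline{M}}(x),T\iota(\fra)).
\]
Since $\overline{M}\hrw X$ is a full embedding, the identity $\Txi a_{\overline{M}}=Tj^\circ\cdot\Txi a\cdot TTj$ together with the naturality $Tj\cdot e_{\overline{M}}=e_X\cdot j$ shows that this is exactly (ii). The step (ii)$\Rw$(iv) I expect to be the main technical obstacle: one has to unfold
\[
x^*\kleisli i_*\kleisli i^*\kleisli x_*(\star,\star),
\]
with $\star$ the unique element of $TE=T1=1$, by tracing $\Txi$ through the $\V$-relation $i_*\kleisli i^*$ and exploiting $Tx\cdot e_E=e_X\cdot x$ together with the identity $m_X^\circ\cdot e_X=e_{TX}\cdot e_X$ recorded in \ref{Topth}; the outer $TE=1$ collapses the quantifiers so that what remains is precisely the supremum in (ii).

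For (iv)$\Rw$(v), the counit inequality $(i^*\kleisli x_*)\kleisli(x^*\kleisli i_*)\le i^*\kleisli a\kleisli i_*=i^*\kleisli i_*\le 1_M^*$ is automatic from $x_*\dashv x^*$ and $i_*\dashv i^*$ in $\Mod{\Tth}$, so (iv), being the unit inequality, already delivers the adjunction. For (v)$\Rw$(vi) I set $\varphi:=i^*\kleisli x_*:E\kmodto M$ and use associativity of Kleisli convolution together with the identity role of $a$ on $X$ in $\Mod{\Tth}$ to compute
\[
x_*=x_*\kleisli 1_E^*\le x_*\kleisli x^*\kleisli i_*\kleisli i^*\kleisli x_*\le a\kleisli i_*\kleisli\varphi=i_*\kleisli\varphi,
\]
while the reverse $i_*\kleisli\varphi\le x_*$ is free from $i_*\kleisli i^*\le a$. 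Finally, for (vi)$\Rw$(i), if $f,g:X\to Y$ agree on $M$ then $f_*\kleisli i_*=g_*\kleisli i_*$, hence
\[
f_*\kleisli x_*=f_*\kleisli i_*\kleisli\varphi=g_*\kleisli i_*\kleisli\varphi=g_*\kleisli x_*,
\]
so $f(x)\cong g(x)$, which by the definition of $\overline{M}$ means $x\in\overline{M}$.
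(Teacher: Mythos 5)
Your proof is correct and takes essentially the same route as the paper, whose proof of this proposition is just the remark ``As for Proposition \ref{CharLClsV}, using now Lemma \ref{CharLdenseT}'' --- i.e., the $\V$-categorical cycle of implications transported along Kleisli convolution, which is exactly what you carry out. Your observation that (iii) and (v) coincide, and your unfolding of $x^*\kleisli i_*\kleisli i^*\kleisli x_*(\star,\star)$ using $TE=1$ and $m_X^\circ\cdot e_X=e_{TX}\cdot e_X$ to recover the supremum in (ii), supply precisely the details the paper leaves implicit.
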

\begin{proof}
As for Proposition \ref{CharLClsT}, using now Lemma \ref{CharLdenseT}.
\end{proof}
We can now proceed as in \ref{LclosureV}.
\begin{proposition}
For a $\Tth$-functor $f : X\to Y$ and $M,M'\subseteq X$, $N\subseteq Y$, we have
\begin{enumerate}
\item $M\subseteq\overline{M}$ and $M\subseteq M'$ implies $\overline{M}\subseteq\overline{M'}$.
\item $\overline{\varnothing}=\varnothing$ and $\overline{\overline{M}}=\overline{M}$.
\item $f(\overline{M})\subseteq \overline{f(M)}$ and $f^{-1}(\overline{N})\supseteq\overline{f^{-1}(N)}$.
\item If $k$ is $\vee$-irreducible and $T$ preserves finite sums, then $\overline{M\cup M'}=\overline{M}\cup\overline{M'}$.
\end{enumerate}
\end{proposition}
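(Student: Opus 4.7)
The plan is to mirror the $\V$-case proofs of \S 2, using the pointwise characterization (ii) of Proposition \ref{CharLClsT} together with the composition-cancellation properties of L-dense $\Tth$-functors (the $\Tth$-analogue of Lemma \ref{CompCancelV}, which the paper asserts carries over).

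For (1), both claims are immediate from the defining property of $\overline{M}$: if $x\in M$, the condition $f|_M=g|_M$ forces $f(x)=g(x)$; and if $M\subseteq M'$, any pair agreeing on $M'$ a fortiori agrees on $M$. For (2), the equality $\overline{\varnothing}=\varnothing$ follows from characterization (ii): a join indexed by $T\varnothing=\varnothing$ is $\bot$, which is incompatible with $k>\bot$ in a non-trivial quantale. For idempotency, $\overline{M}\subseteq\overline{\overline{M}}$ is part (1); conversely, both $M\hrw\overline{M}$ and $\overline{M}\hrw\overline{\overline{M}}$ are L-dense by the very definition of the L-closure, so their composite $M\hrw\overline{\overline{M}}$ is L-dense, and maximality of $\overline{M}$ among subsets $N\subseteq X$ with $M\hrw N$ L-dense forces $\overline{\overline{M}}\subseteq\overline{M}$.

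For (3), I would copy the $\V$-argument verbatim. In the commuting square
\[
\xymatrix{M\ar[r]\ar@{>->}[d] & f(M)\ar@{>->}[d] \\ \overline{M}\ar[r] & f(\overline{M})}
\]
the diagonal $M\to f(\overline{M})$ factors as the L-dense inclusion $M\hrw\overline{M}$ followed by the surjective corestriction $\overline{M}\twoheadrightarrow f(\overline{M})$; the latter is L-dense because surjective $\Tth$-functors satisfy $f_*\kleisli f^*=b$, as explicitly recorded in the paper's discussion of $f_*$ and $f^*$ in the $\Tth$-setting. The cancellation property ``$g\cdot f$ L-dense implies $g$ L-dense'' then shows that $f(M)\hrw f(\overline{M})$ is L-dense, so $f(\overline{M})\subseteq\overline{f(M)}$. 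The preimage inclusion follows by taking $M=f^{-1}(N)$: $f(\overline{f^{-1}(N)})\subseteq\overline{f(f^{-1}(N))}\subseteq\overline{N}$ by monotonicity, whence $\overline{f^{-1}(N)}\subseteq f^{-1}(\overline{N})$.

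For (4), $\overline{M}\cup\overline{M'}\subseteq\overline{M\cup M'}$ is part (1). Conversely, characterization (ii) for $x\in\overline{M\cup M'}$ reads
\[
k\le\bigvee_{\frx\in T(M\cup M')}a(\frx,x)\otimes\Txi a(Te_X\cdot e_X(x),\frx).
\]
Preservation of finite sums by $T$ gives $T(M\cup M')=TM\cup TM'$ inside $TX$ (writing $M\cup M'=M\sqcup(M'\setminus M)$ and using that \BC\ sends inclusions to inclusions), so the supremum splits as a binary join of the analogous expressions indexed by $TM$ and $TM'$. By $\vee$-irreducibility of $k$, one of these summands already satisfies $k\le\ldots$, placing $x$ in $\overline{M}$ or in $\overline{M'}$. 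The main non-routine step, which is where I expect any difficulty to lie, is the decomposition $T(M\cup M')=TM\cup TM'$; once that is in hand, the rest is a direct transcription of the $\V$-case.
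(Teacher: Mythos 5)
Your proof follows the paper's route exactly: the paper proves this proposition only in the $\V$-case (parts (1) and (2) declared obvious, part (3) via the cancellation lemma applied to the same commuting square of inclusions and corestrictions, part (4) via the pointwise characterization and $\vee$-irreducibility of $k$) and simply transfers it to the $\Tth$-setting, and your adaptation---in particular identifying that the new hypothesis that $T$ preserves finite sums is there precisely to split the index set $T(M\cup M')$ as $TM\cup TM'$ before invoking $\vee$-irreducibility---is exactly what that transfer requires. The only step you assert without justification is $T\varnothing=\varnothing$ in part (2), but this does hold in the present setting (by \BC, $T$ applied to the empty pullback of the two points of $2$ forces $T\varnothing=\varnothing$ once $e_2$ is injective, which is guaranteed for non-trivial $\V$ since $\xi\cdot e_{\V}=1_{\V}$ makes $e_{\V}$ injective), so there is no genuine gap.
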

\begin{corollary}
If $k$ is $\vee$-irreducible in $\V$, then the L-closure operator defines a topology on $X$ such that every $\Tth$-functor becomes continuous. Hence, L-closure defines a functor $L:\Cat{\Tth}\to\TOP$.
\end{corollary}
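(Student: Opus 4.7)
The plan is to verify that the Kuratowski closure axioms hold for the L-closure operator, by invoking the four items of the preceding proposition, and then to read off continuity from item (3) to obtain functoriality.

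First I would check the four axioms. Extensivity $M\subseteq\overline{M}$, monotonicity ($M\subseteq M'\Rightarrow\overline{M}\subseteq\overline{M'}$), and idempotency $\overline{\overline{M}}=\overline{M}$ are given directly by items (1) and (2) of the proposition, and do not need the $\vee$-irreducibility of $k$. The empty-set axiom $\overline{\varnothing}=\varnothing$ is the first half of item (2). The preservation of binary unions $\overline{M\cup M'}=\overline{M}\cup\overline{M'}$ is item (4); this is where the hypothesis that $k$ be $\vee$-irreducible (together with $T$ preserving finite sums, as required in the proposition) is used. Together, these five properties precisely characterise a Kuratowski closure operator on the underlying set $X$, and hence determine a unique topology on $X$ in which the closed sets are the fixed points of $\overline{(-)}$.

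Next I would deduce continuity of an arbitrary $\Tth$-functor $f:X\to Y$ from item (3), which states $f(\overline{M})\subseteq\overline{f(M)}$ for every $M\subseteq X$. Specialising to $M=f^{-1}(N)$ with $N\subseteq Y$ closed in the L-topology of $Y$, one gets $f(\overline{f^{-1}(N)})\subseteq\overline{f(f^{-1}(N))}\subseteq\overline{N}=N$, whence $\overline{f^{-1}(N)}\subseteq f^{-1}(N)$, i.e.\ $f^{-1}(N)$ is closed in $X$. Thus $f$ is continuous between the induced topological spaces, so the assignment $X\mapsto(X,\text{L-closed sets})$, $f\mapsto f$ defines a functor $L:\Cat{\Tth}\to\TOP$; functoriality on identities and composition is inherited from $\Cat{\Tth}\to\SET$.

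The argument is essentially a bookkeeping exercise once the previous proposition is in hand, so no substantial obstacle is expected; the only point worth noting is that the statement of the corollary tacitly carries over the hypothesis on $T$ preserving finite coproducts that was made explicit in item (4) of the proposition (this holds in particular for the identity monad and the ultrafilter monad, so the examples of interest are unaffected).
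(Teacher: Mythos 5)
Your argument is correct and is exactly the derivation the paper intends: the corollary is stated without proof as an immediate consequence of the preceding proposition, with items (1), (2), (4) giving the Kuratowski axioms and item (3) giving continuity. Your remark that the corollary tacitly carries the additional hypothesis that $T$ preserve finite sums (needed for item (4) in the $\Tth$-setting) is a fair and accurate observation about the paper's statement.
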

\begin{example}
For a topological space $X$, $x\in X$ lies in the L-closure of $A\subseteq X$ precisely if there exists some ultrafilter $\frx\in UA$ with $\overline{x}\in\frx$ and which converges to $x$; in other words, for each neiborhood $U$ of $x$ we have $U\cap\overline{x}\cap A\neq\varnothing$. Hence the L-closure of a topological space $X$ coincides with the so called \emph{b-closure} \cite{Bar_EpiT0}.
\end{example}

\subsection{L-separatedness via the L-closure}
\begin{proposition}
Let $X=(X,a)$ be a $\Tth$-category and $\Delta\subseteq X\times X$ the diagonal. Then
\[
\overline{\Delta}=\{(x,y)\in X\times Y\mid x\cong y\}
\]
\end{proposition}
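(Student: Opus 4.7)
The plan is to adapt the proof of Proposition~\ref{ClDiagV} from the $\V$-setting. There are two inclusions to verify, both proceeding essentially as in the $\V$-case; the key ingredient is that equivalence of points in $X \times X$ is computed coordinate-wise.

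For $\overline{\Delta} \subseteq \{(x,y) \mid x \cong y\}$ I would exploit the two projections $\pi_1, \pi_2 : X \times X \to X$, which are $\Tth$-functors (being part of the categorical product in $\Cat{\Tth}$) and agree on the diagonal $\Delta$. If $(x,y) \in \overline{\Delta}$, then the very definition of the L-closure, applied to this pair, yields $\pi_1(x,y) \cong \pi_2(x,y)$, that is, $x \cong y$.

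For the converse, assume $x \cong y$ and let $f, g : X \times X \to Y$ be any $\Tth$-functors with $f|_\Delta = g|_\Delta$; I must show $f(x,y) \cong g(x,y)$. The critical auxiliary fact is $(x,y) \cong (x,x)$ in $X \times X$. Once this is granted, the chain
\[
f(x,y) \cong f(x,x) = g(x,x) \cong g(x,y)
\]
finishes the argument, using that $\Tth$-functors preserve $\cong$ at the outer steps and the assumption $f|_\Delta = g|_\Delta$ in the middle. Since $f$ and $g$ were arbitrary, this gives $(x,y) \in \overline{\Delta}$.

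The main obstacle is justifying $(x,y) \cong (x,x)$ whenever $x \cong y$. I would argue abstractly by the 2-dimensional universal property of the product in $\Cat{\Tth}$: since the projections $\pi_i$ are $\Tth$-functors and jointly detect the order on global points, the order on $\Cat{\Tth}(E, X\times X)$ decomposes as the product of the orders on $\Cat{\Tth}(E, X)$, so that $(x_1,y_1) \le (x_2,y_2)$ iff $x_1 \le x_2$ and $y_1 \le y_2$. Alternatively, one can verify this concretely by describing the structure relation of $X \times X$ and then applying the elementary pointwise characterization of $\le$ between $\Tth$-functors established in the subsection on $\Tth$-modules.
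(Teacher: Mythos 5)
Your proposal is correct and follows exactly the route the paper intends: the paper's proof of this proposition is simply ``as for Proposition \ref{ClDiagV}'', and your argument is precisely that adaptation, with the projections giving one inclusion and the coordinatewise computation of $\cong$ (hence $(x,y)\cong(x,x)$) giving the other. The justification of the coordinatewise claim via the product structure together with the pointwise characterization of $\le$ is the same observation the paper makes by noting that the canonical functor to $\ORD$ preserves products.
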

\begin{proof}
As for Proposition \ref{ClDiagV}.
\end{proof}
\begin{corollary}
A $\Tth$-category $X$ is L-separated if and only if the diagonal $\Delta$ is closed in $X\times X$. 
\end{corollary}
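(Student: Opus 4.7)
The plan is to chain together the immediately preceding proposition with the earlier characterization of L-separated $\Tth$-categories, specifically the equivalence (i)$\iff$(ii) in the first proposition of the L-separation subsection. No new structural work is required; the corollary is essentially a translation of definitions once the description of $\overline{\Delta}$ is in hand.

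First, I would note that saying $\Delta\subseteq X\times X$ is L-closed means $\overline{\Delta}=\Delta$. By the preceding proposition we already know
\[
\overline{\Delta}=\{(x,y)\in X\times X\mid x\cong y\},
\]
while $\Delta=\{(x,x)\mid x\in X\}$. Hence $\overline{\Delta}=\Delta$ is equivalent to the implication
\[
x\cong y\;\Rw\;x=y\qquad(x,y\in X),
\]
since the reverse implication $x=y\Rw x\cong y$ is automatic.

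Next, the proposition on L-separation states that $X$ is L-separated if and only if this same implication holds for all $x,y\in X$. Combining the two equivalences yields $\overline{\Delta}=\Delta\iff X$ is L-separated, which is exactly the corollary.

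I do not expect any obstacle: the only thing one could worry about is whether the product $\Tth$-structure on $X\times X$ used in computing $\overline{\Delta}$ is the same one used implicitly in the L-separation characterization, but this is settled by the preceding proposition, whose proof invokes precisely that product structure (and the projections $\pi_1,\pi_2:X\times X\to X$ as $\Tth$-functors). Thus the proof reduces to a one-line invocation of both results.
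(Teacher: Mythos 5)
Your proof is correct and is exactly the argument the paper intends: the corollary follows by combining the preceding proposition's description of $\overline{\Delta}$ with the equivalence (i)$\iff$(ii) of the proposition characterizing L-separated $\Tth$-categories. No further comment is needed.
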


\begin{theorem}
$\CatSep{\Tth}$ is an epi-reflective subcategory of $\Cat{\Tth}$, where the reflection map is given by $\yoneda_X:X\to\yoneda_X(X)$, for each $\Tth$-category $X$. Hence, limits of L-separated $\Tth$-categories are formed in $\Cat{\Tth}$, while colimits are obtained by reflecting the colimit formed in $\Cat{\Tth}$. The epimorphisms in $\Cat{\Tth}$ are precisely the L-dense $\Tth$-functors. 
\end{theorem}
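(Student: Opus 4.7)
The strategy is to mirror the corresponding $\V$-case theorem, using the $\Tth$-level constructions developed above. First I would identify $\yoneda_X:X\to\yoneda_X(X)$ as the reflection map: its target is a subcategory of the L-separated $\Tth$-category $\hat{X}$ and is therefore L-separated (subcategory corollary), while the corestricted Yoneda $\Tth$-functor is surjective and fully faithful, hence L-dense, and in particular epi in $\Cat{\Tth}$. For the universal property, given $f:X\to Y$ with $Y$ L-separated, the factorization $g:\yoneda_X(X)\to Y$ is forced by $g(\yoneda_X(x)):=f(x)$; it is well-defined because $\yoneda_X(x)=\yoneda_X(x')$ implies $x\cong x'$ (by the L-separation proposition for $\Tth$-categories), hence $f(x)\cong f(x')$, hence $f(x)=f(x')$ since $Y$ is L-separated, and functoriality of $g$ follows from fullness of $\yoneda_X$.

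Given this epi-reflection, the clauses about limits being formed in $\Cat{\Tth}$ and colimits being obtained by reflecting the colimit computed in $\Cat{\Tth}$ are standard consequences for any epi-reflective embedding. For the epimorphism characterization, the implication ``L-dense $\Rw$ epi'' follows from the preceding proposition showing that an L-dense $m$ forces $f\cdot m=g\cdot m$ to imply $f\cong g$, combined with postcomposition with the L-separated reflection of the target to upgrade $\cong$ to equality. Conversely, if $f:X\to Y$ is epi in $\Cat{\Tth}$, one constructs its cokernel pair $Y\rightrightarrows Z$ in $\Cat{\Tth}$ by the $\Tth$-analogue of the disjoint-union construction used in the $\V$-case proof of the L-dense characterization, so that epic-ness of $f$ forces the two coprojections to coincide; tracing the induced equality of modules back through the Kleisli convolution then yields $f_*\kleisli f^*=1_Y^*$.

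The main obstacle I expect is the explicit $\Tth$-cokernel pair construction: in the $\V$-case it was a short $\V$-relational calculation on a disjoint-union set, but in the $\Tth$-case one must place a genuine $\Tth$-relation on the pushout set and verify, via the Kleisli convolution together with \BC\ and the extended functor $\Txi$, that the equality of the two coprojections reproduces the module identity $f_*\kleisli f^*=1_Y^*$ exactly. Once this structural point is in hand, the remaining assertions follow along the pattern already established by the corresponding $\V$-case proofs and by the formal properties of epi-reflective subcategories.
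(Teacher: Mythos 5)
The paper prints no proof for this theorem: it is meant to follow directly from the two results already established in this section (the proposition characterizing L-dense $\Tth$-functors by the test condition against $\V$-valued $\Tth$-functors, and the proposition characterizing L-separation via $\yoneda$), together with standard facts about epireflective subcategories. Your treatment of the reflection itself is fine and matches what is intended: $\yoneda_X(X)$ is L-separated as a subcategory of $\hat{X}$, the corestriction of $\yoneda_X$ is surjective (hence L-dense and epi), and the factorization of any $f:X\to Y$ with $Y$ L-separated through $\yoneda_X(X)$ is forced and well defined. The problems are in the last clause.

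First, read literally, ``the epimorphisms in $\Cat{\Tth}$ are precisely the L-dense $\Tth$-functors'' cannot be what you should be proving: $\Cat{\Tth}$ is topological over $\SET$, so its epimorphisms are exactly the surjections, while L-dense maps need not be surjective (in $\TOP\cong\Cat{\Uth_\two}$ the inclusion of a point into an indiscrete two-point space is L-dense but not epi). The clause concerns epimorphisms in $\CatSep{\Tth}$ -- compare the introduction (``for L-separated $\V$-categories, L-dense simply means epimorphism'') and the citation of Baron's note on epimorphisms of $T_0$-spaces. This is why your step ``postcompose with the L-separated reflection of the target to upgrade $\cong$ to equality'' breaks down: from $\yoneda_Y\cdot f=\yoneda_Y\cdot g$ you cannot recover $f=g$, since $\yoneda_Y$ is injective only when $Y$ is already L-separated. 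Second, for the converse direction your plan hinges on an explicit $\Tth$-level cokernel-pair construction, which you yourself flag as the main unresolved obstacle; but this is exactly the construction the paper deliberately avoids. The proposition on L-dense $\Tth$-functors replaces the $\V$-case cokernel-pair argument by condition (iii): $i$ is L-dense iff any two $\Tth$-functors into $\V$ agreeing on $i$ are equal. Since $\V$ is L-separated, an epimorphism $f$ of $\CatSep{\Tth}$ satisfies exactly this condition, hence is L-dense; and conversely an L-dense $f$ satisfies $h\cdot f=h'\cdot f\Rw h\cong h'\Rw h=h'$ for L-separated codomains. So once the statement is read in $\CatSep{\Tth}$, both directions are immediate from results already proved, and the pushout construction you propose is neither carried out in your argument nor needed.
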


\subsection{L-completeness via the L-closure}

\begin{lemma}
Let $X=(X,a)$ be a $\Tth$-category and $M\subseteq X$. Then the following assertions hold.
\begin{enumerate}
\item Assume that $X$ is L-complete and $M$ be L-closed. Then $M$ is L-complete.
\item Assume that $X$ is L-separated and $M$ is L-complete. Then $M$ is L-closed.
\end{enumerate}
\end{lemma}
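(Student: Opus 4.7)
The lemma is the $\Tth$-analogue of Lemma \ref{ClosVsComplV}, and my plan is to transcribe that proof into the Kleisli convolution setting, replacing Proposition \ref{CharLClsV} by Proposition \ref{CharLClsT} throughout.

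For part (1), I would fix an adjunction $\varphi \dashv \psi$ with $\varphi: E \kmodto M$ and $\psi: M \kmodto E$ and try to represent it by a $\Tth$-functor $E \to M$. The trick is to push it out to $X$: since $i_*\dashv i^*$ for the inclusion $i:M\hrw X$, composition yields an adjunction $i_*\kleisli\varphi \dashv \psi\kleisli i^*$ in $\Mod{\Tth}$. By L-completeness of $X$ there exists $x\in X$ with $x_*=i_*\kleisli\varphi$ and $x^*=\psi\kleisli i^*$. The first equality is exactly condition (vi) of Proposition \ref{CharLClsT}, so $x\in\overline{M}$; and because $M$ is L-closed, $x\in M$, i.e.\ $x=i(y)$ for some $y\in M$. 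The fully-faithfulness of $i$ (which gives $i^*\kleisli i_* = 1_M^*$) then lets me cancel: $y_* = i^*\kleisli i_*\kleisli y_* = i^*\kleisli x_* = i^*\kleisli i_*\kleisli\varphi = \varphi$, and dually $y^* = \psi$, as required.

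For part (2), I take $x\in\overline{M}$ and apply Proposition \ref{CharLClsT}(v), which says that $i^*\kleisli x_* \dashv x^*\kleisli i_*$ is an adjunction of $\Tth$-modules between $M$ and $E$. L-completeness of $M$ then produces $y\in M$ with $y_* = i^*\kleisli x_*$ and $y^* = x^*\kleisli i_*$. Composing with $i_*$ on the left and using the counit $i_*\kleisli i^* \le 1_X^*$ gives $i(y)_* = i_*\kleisli y_* = i_*\kleisli i^*\kleisli x_* \le x_*$; dually $i(y)^* \le x^*$. Rewriting these via the definition of the order on $\Cat{\Tth}$ yields $i(y) \le x$ and $x \le i(y)$, so $i(y)\cong x$, and L-separatedness of $X$ forces $i(y) = x$, placing $x$ in $M$.

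The only delicate points in both halves are the manipulations with the Kleisli convolution, in particular that composition with $i_*$ and $i^*$ preserves adjointness and that the unit/counit inequalities $i^*\kleisli i_* = 1_M^*$ and $i_*\kleisli i^* \le 1_X^*$ (for the full subcategory embedding) allow cancellation on the appropriate side. Once these are invoked, the argument is structurally identical to Lemma \ref{ClosVsComplV}, so I would merely remark this and substitute the proof of the $\V$-case almost verbatim.
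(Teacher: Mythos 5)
Your proof is correct and follows essentially the same route as the paper: part (1) by transporting the adjunction along $i_*\dashv i^*$, representing it in $X$ by L-completeness, and recognizing the factorization $x_*=i_*\kleisli\varphi$ as the closure condition of Proposition \ref{CharLClsT}; part (2) by representing the adjunction $i^*\kleisli x_*\dashv x^*\kleisli i_*$ in $M$ and using L-separatedness to conclude $i(y)=x$. The Kleisli-convolution manipulations you flag (associativity, $i^*\kleisli i_*=1_M^*$ for the full embedding, $i_*\kleisli i^*\le 1_X^*$) all hold as stated, so no gap remains.
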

\begin{proof}
As for Lemma \ref{ClosVsComplV}.
\end{proof}
\begin{theorem}\label{CharComplT}
Let $X=(X,b)$ be a $\Tth$-category. The following assertions are equivalent.
\begin{eqcond}
\item $X$ is L-complete.
\item $X$ is L-injective.
\item $\yoneda:X\to\tilde{X}$ has a pseudo left-inverse $\Tth$-functor $R:\tilde{X}\to X$, i.e.\ $R\cdot\yoneda\cong 1_X$.
\end{eqcond}
\end{theorem}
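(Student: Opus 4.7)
The plan is to prove the cycle (i)$\Rw$(iii)$\Rw$(ii)$\Rw$(i), closely mimicking the $\V$-categorical argument.

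For (i)$\Rw$(iii): if $X$ is L-complete, each $\psi\in\tilde X$, being tight and hence right-adjoint as a $\Tth$-module $\psi:X\kmodto E$ by Proposition \ref{AdjMod}, is of the form $\psi=x^*$ for some $x\in X$; use the axiom of choice to set $R(\psi)=x$. The equivalence $R\cdot\yoneda\cong 1_X$ is then immediate because $\yoneda(x)$, viewed as a $\Tth$-module $X\kmodto E$, coincides with $x^*$. It remains to verify that $R$ is a $\Tth$-functor, i.e.\ $\hat a(\frP,\psi)\le a(TR(\frP),R(\psi))$ for $\frP\in T\tilde X$ and $\psi\in\tilde X$; this is obtained by unfolding the structure $\hat a$ as inherited from $|X|\multimap\V$ and applying the Yoneda identities of Theorem \ref{Yoneda}.

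For (iii)$\Rw$(ii): given an L-equivalence $j:A\to B$ and a $\Tth$-functor $g:A\to X$, extend $\yoneda_X\cdot g:A\to|X|\multimap\V$ along $j$ using the injectivity of $|X|\multimap\V$, obtaining $h:B\to|X|\multimap\V$ with $h\cdot j\cong\yoneda_X\cdot g$. The set $\tilde X$ is closed under $\cong$ and L-closed in $|X|\multimap\V$ (the latter by identifying $\tilde X$ with the L-closure of $\yoneda(X)$ in $\hat X$, together with the L-closedness of $\hat X$ in $|X|\multimap\V$ that follows from the Yoneda characterisation in Theorem \ref{Yoneda}); combined with the fact that $h$ preserves L-closures and $B=\overline{j(A)}$, this forces $h(B)\subseteq\tilde X$. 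The composite $R\cdot h:B\to X$ then extends $g$ up to $\cong$.

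For (ii)$\Rw$(i): the Yoneda functor $\yoneda:X\to\tilde X$ is fully faithful and L-dense --- hence an L-equivalence --- so L-injectivity of $X$ supplies $R:\tilde X\to X$ with $R\cdot\yoneda\cong 1_X$. Given a right-adjoint $\Tth$-module $\psi:X\kmodto E$, tightness (Proposition \ref{AdjMod}) places $\psi$ in $\tilde X$; set $x:=R(\psi)$. Since $\yoneda_*$ is invertible in $\Mod{\Tth}$ with inverse $\yoneda^*$ and $R_*\kleisli\yoneda_*=(R\cdot\yoneda)_*=1_X^*$, we deduce $R_*=\yoneda^*$, so that $x^*=\psi^*\kleisli R^*=\psi^*\kleisli\yoneda_*$; a short calculation exploiting Theorem \ref{Yoneda} reduces this to $\psi$, and uniqueness of left adjoints then yields $\varphi=x_*$ for the partner $\varphi$ in $\varphi\dashv\psi$.

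The chief obstacle is the final identification $x^*=\psi$ in (ii)$\Rw$(i) --- equivalently, the structure-preservation of $R$ in (i)$\Rw$(iii) --- both of which rest on Theorem \ref{Yoneda} and the explicit module calculus in $\tilde X$. A secondary point requiring care is the L-closedness of $\tilde X$ inside $|X|\multimap\V$, needed in (iii)$\Rw$(ii).
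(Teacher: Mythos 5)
Your proposal is correct, but it travels around the triangle in the opposite direction from the paper and uses genuinely different ingredients for two of the three legs. The paper proves (i)$\Rw$(ii) by a one-line module computation: for an L-equivalence $i:A\to B$ and $f:A\to X$ one has the adjunction $f_*\kleisli i^*\dashv i_*\kleisli f^*$ of $\Tth$-modules $B\kmodto X$, so L-completeness yields $g$ with $g_*=f_*\kleisli i^*$ and hence $g\cdot i\cong f$; then (ii)$\Rw$(iii) is immediate and (iii)$\Rw$(i) reads off $\psi=R(\psi)^*$ from $\yoneda\cdot R=1_{\tilde X}$. You instead construct $R$ directly from L-completeness via choice (your (i)$\Rw$(iii), which is fine: $\yoneda\cdot R=1_{\tilde X}$ on the nose and full fidelity of $\yoneda$ makes $R$ automatically a $\Tth$-functor), obtain L-injectivity by the classical retract-of-an-injective argument through $|X|\multimap\V$, and recover L-completeness from injectivity applied to $\yoneda:X\to\tilde X$ followed by a module calculation ($R_*=\yoneda^*$, $x^*=\psi^*\kleisli\yoneda_*=\psi$ via Theorem \ref{Yoneda}), which checks out. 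The trade-off: the paper's (i)$\Rw$(ii) never mentions $|X|\multimap\V$ and needs no closure theory beyond the density and full fidelity of $\yoneda:X\to\tilde{X}$, whereas your (iii)$\Rw$(ii) leans on the L-closedness of $\hat X$ in $|X|\multimap\V$ and the identification $\tilde X=\overline{\yoneda(X)}$ --- results the paper establishes only after this theorem, by a nontrivial computation. There is no circularity, since those propositions do not use Theorem \ref{CharComplT}, but you should cite them honestly as heavy inputs rather than as consequences ``of the Yoneda characterisation''; with that caveat your argument is complete.
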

\begin{proof}
To see (i)$\Rw$(ii), let $i:A\to B$ be a fully faithful dense $\Tth$-functor and $f:A\to X$ be a $\Tth$-functor. Since $i_*\dashv i^*$ is actually an equivalence of $\Tth$-modules, we have $f_*\kleisli i^*\dashv i_*\kleisli f^*$. Hence, since $X$ is L-complete, there is a $\Tth$-functor $g:B\to X$ such that $g_*=f_*\kleisli i^*$, hence $g_*\kleisli i_*=f_*$.\\
The implication (ii)$\Rw$(iii) is surely true since $\yoneda:X\to\tilde{X}$ is dense and fully faithful.\\
Finally, to see (iii)$\Rw$(i), let $R:\tilde{X}\to X$ be a left inverse of $\yoneda:X\to\tilde{X}$. Then $\yoneda\cdot R=1_{\tilde{X}}$ since $\yoneda:X\to\tilde{X}$ is dense and $\tilde{X}$ is L-separated. Hence, for each right adjoint $\Tth$-module $\psi:X\kmodto E$, we have $\psi=R(\psi)^*$.
\end{proof}
Therefore we have that $|X|\multimap\V$ is L-complete. Our next result shows that also $\hat{X}$ is L-complete.
\begin{proposition}
$\hat{X}$ is L-closed in $|X|\multimap\V$, for each $\Tth$-category $X$.
\end{proposition}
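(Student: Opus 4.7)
The plan is to reduce membership in $\hat{X}$ to the characterization of Theorem \ref{Yoneda}(2)---namely $\varphi(\frx)\le\fspstr{m_X}{\hom_\xi}(T\yoneda(\frx),\varphi)$ for every $\frx\in TX$---and then establish this by combining the universal property defining the L-closure with the L-separation of $\V$.

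Set $Y=|X|\multimap\V$ and fix $\frx\in TX$. I would introduce the two maps
\[
\ev_\frx\colon Y\to\V,\;\psi\mapsto\psi(\frx),
\qquad
\yoneda_\frx\colon Y\to\V,\;\psi\mapsto\fspstr{m_X}{\hom_\xi}(T\yoneda(\frx),\psi).
\]
The key technical step is to verify that both are $\Tth$-functors. The element $\frx\in TX$ defines a $\Tth$-functor $E\to|X|$ (using $T1=1$ together with $m_X\cdot e_{TX}=1_{TX}$, so that $m_X(e_{TX}(\frx),\frx)=k$ as required), whence $\ev_\frx$ arises as the composite $Y\cong E\otimes Y\to|X|\otimes Y\xrightarrow{\ev}\V$ with the counit of the exponential adjunction for $|X|$ (applicable since $m_X\cdot\Txi m_X=m_X\cdot m_{|X|}$). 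Similarly, $T\yoneda(\frx)\in TY$ defines a $\Tth$-functor $E\to|Y|$, and by Theorem \ref{CharTMod} applied to the identity module $\hat{a}_Y$ on $Y$ the structure $\hat{a}_Y\colon|Y|\otimes Y\to\V$ is a $\Tth$-functor; composition then gives $\yoneda_\frx$.

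By Theorem \ref{Yoneda}(1)--(2), every $\psi\in\hat{X}$ satisfies $\psi(\frx)=\fspstr{m_X}{\hom_\xi}(T\yoneda(\frx),\psi)$, so $\ev_\frx$ and $\yoneda_\frx$ agree on $\hat{X}$. The definition of the L-closure then forces $\ev_\frx(\varphi)\cong\yoneda_\frx(\varphi)$ in $\V$ for every $\varphi\in\overline{\hat{X}}$; the L-separation of $\V$ upgrades this to strict equality $\varphi(\frx)=\fspstr{m_X}{\hom_\xi}(T\yoneda(\frx),\varphi)$. Since $\frx\in TX$ was arbitrary, Theorem \ref{Yoneda}(2) places $\varphi$ in $\hat{X}$, proving $\overline{\hat{X}}\subseteq\hat{X}$.

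The main obstacle, modest as it is, lies in the bookkeeping needed to recognize $\ev_\frx$ and $\yoneda_\frx$ as genuine $\Tth$-functors rather than just set maps---in particular, the identification of $\frx$ and $T\yoneda(\frx)$ with $\Tth$-functors out of $E$. Once these verifications are in hand, the defining universal property of the L-closure, together with the L-separation of $\V$ and the Yoneda characterization of $\hat{X}$, does all the remaining work.
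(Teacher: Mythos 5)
Your proof is correct, but it takes a genuinely different route from the paper's. The paper works with the elementwise formula for membership in the L-closure (condition (ii) of Proposition \ref{CharLClsT}) and verifies directly, by a chain of inequalities obtained from applying $\Txi$ to $\fspstr{m_X}{\hom_\xi}$ and to the evaluation maps, that any $\varphi$ in the closure satisfies $\Txi a\cdot m_X^\circ(\frx,\fry)\otimes\varphi(\fry)\le\varphi(\frx)$, i.e.\ that $\varphi:X^\op\to\V$ is a $\Tth$-functor. You instead exploit the defining ``separation'' property of the L-closure: you exhibit, for each $\frx\in TX$, two $\Tth$-functors $\ev_\frx,\yoneda_\frx:(|X|\multimap\V)\to\V$ which agree on $\hat{X}$ by Theorem \ref{Yoneda}, deduce that they agree up to $\cong$ --- hence, by L-separation of $\V$, strictly --- on the closure of $\hat{X}$, and conclude via Theorem \ref{Yoneda}(2). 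The real content sits in different places in the two arguments: in the paper it is the explicit $\Txi$-computation; in yours it is the verification that $\ev_\frx$ and $\yoneda_\frx$ are genuine $\Tth$-functors, which you correctly reduce to the exponentiability of $|X|$ (valid since $m_X\cdot\Txi m_X=m_X\cdot m_{TX}$), to Theorem \ref{CharTMod} applied to the identity module of $|X|\multimap\V$, and to the observation that any $\frp\in TZ$ yields a $\Tth$-functor $E\to|Z|$ via $T1=1$ and the unit law. Your route is arguably more conceptual and reusable --- it isolates the general principle that the set where two $\Tth$-functors into an L-separated $\Tth$-category agree is L-closed, and exhibits $\hat{X}$ as an intersection of such sets --- whereas the paper's computation is self-contained and never needs $\ev_\frx$ or $\yoneda_\frx$ to be recognized as morphisms.
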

\begin{proof}
Let $X=(X,a)$ be a $\Tth$-category and assume that $\varphi\in(|X|\multimap\V)$ belongs to the closure of $\hat{X}$, that is,
\[
k\le\bigvee_{\fru\in T\hat{X}}\fspstr{m_X}{\hom_\xi}(\fru,\varphi)\otimes \Txi\fspstr{m_X}{\hom_\xi}(Te_{|X|\multimap\V}\cdot e_{|X|\multimap\V}(\varphi),\fru).
\]
We wish to show that $r(\frx,\fry)\otimes\varphi(\fry)\le\varphi(\frx)$ for all $\frx,\fry\in TX$, where $r=\Txi a\cdot m_X^\circ$.\\
First note that, for all $\alpha,\beta\in(|X|\multimap\V)$, 
\[
e_{|X|\multimap\V}^\circ\cdot\fspstr{m_X}{\hom_\xi}(\alpha,\beta)
=\bigwedge_{\frx\in TX}(\alpha(\frx)\multimap\beta(\frx)).
\]
Hence, with $h_\frx:(|X|\multimap\V)\relto(|X|\multimap\V)$, $h_\frx(\alpha,\beta)=(\alpha(\frx)\multimap\beta(\frx))$, we have $T e_{|X|\multimap\V}^\circ\cdot\Txi\fspstr{m_X}{\hom_\xi}\le\Txi h_\frx$. Since the diagram
\[
\xymatrix{(|X|\multimap\V)\times(|X|\multimap\V)\ar[rr]^-{\ev_\frx\times\ev_\frx}\ar[drr]_{h_\frx} &&
\V\times\V\ar[d]^\multimap \\ &&\V}
\]
commutes and in
\[
\xymatrix{T((|X|\multimap\V)\times(|X|\multimap\V))\ar[rr]^-{T(\ev_\frx\times\ev_\frx)}\ar[d]_\can
&& T(\V\times\V)\ar[r]^-{T(\multimap)}\ar[d]_{\can}\ar@{}[ddr]|{\ge} & T\V\ar[dd]^\xi\\
T(|X|\multimap\V)\times T(|X|\multimap\V)\ar[rr]_-{T\ev_\frx\times T\ev_\frx} && T\V\times T\V\ar[d]_{\xi\times\xi}\\
&&\V\times\V\ar[r]_\multimap &\V}
\]
the left hand side diagram commutes and in the right hand side diagram we have ``lower path'' greater or equal ``upper path'', we have
\[
\Txi h_\frx(\fru,\frv)\le(\fru(\frx)\multimap\frv(\frx))
\]
for each $\frx\in TX$ and $\fru,\frv\in T(|X|\multimap\V)$, where $\fru(\frx)=\xi\cdot T\ev_\frx(\fru)$. Accordingly, $e_{|X|\multimap\V}(\varphi)(\frx)=\varphi(\frx)$ and we obtain
\begin{align*}
\forall\frx\in TX\;.\;\Txi\fspstr{m_X}{\hom_\xi}(Te_{|X|\multimap\V}\cdot e_{|X|\multimap\V}(\varphi),\fru)\le(\varphi(\frx)\multimap\fru(\frx)).
\end{align*}
Furthermore, for all $\frx,\fry\in TX$ we have
\[
\xymatrix{\hat{X}\ar[d]_{!}\ar[r]^-{\Delta}\ar@{}[drr]|{\le} &\hat{X}\times\hat{X}\ar[r]^-{\ev_\frx\times\ev_\fry} &\V\times\V\ar[d]^\multimap\\
1\ar[rr]_{r(\frx,\fry)} && \V}
\]
and we obtain
\[
r(\frx,\fry)
\le \xi\cdot T(\multimap)\cdot T(\ev_\frx\times\ev_\fry)\cdot T\Delta(\fru)
\le (\fru(\fry)\multimap\fru(\frx))
\]
for each $\fru\in T\hat{X}$. We conclude that
\begin{align*}
r(\frx,\fry)\otimes\varphi(\fry)
&\le \bigvee_{\fru\in T\hat{X}}r(\frx,\fry)\otimes\varphi(\fry)\otimes\fspstr{m_X}{\hom_\xi}(\fru,\varphi)\otimes \Txi\fspstr{m_X}{\hom_\xi}(Te_{|X|\multimap\V}\cdot e_{|X|\multimap\V}(\varphi),\fru)\\
&\le \bigvee_{\fru\in T\hat{X}}r(\frx,\fry)\otimes\varphi(\fry)\otimes (\varphi(\fry)\multimap\fru(\fry))\otimes \fspstr{m_X}{\hom_\xi}(\fru,\varphi)\\
&\le \bigvee_{\fru\in T\hat{X}}r(\frx,\fry)\otimes\fru(\fry)\otimes \fspstr{m_X}{\hom_\xi}(\fru,\varphi)\\
&\le \bigvee_{\fru\in T\hat{X}}\fru(\frx)\otimes \fspstr{m_X}{\hom_\xi}(\fru,\varphi)
\le \bigvee_{\fru\in T\hat{X}}\fru(\frx)\otimes(\fru(\frx)\multimap\varphi(\frx))\le\varphi(\frx).\qedhere
\end{align*}
\end{proof}
\begin{proposition}
Let $X=(X,a)$ be a $\Tth$-category and $\psi\in\hat{X}$. Then $\psi$ is a right adjoint $\Tth$-module if and only if $\psi\in\overline{\yoneda(X)}$.
\end{proposition}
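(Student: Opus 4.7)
The plan is to show that, once the sum in the closure criterion is reindexed through $T\yoneda$, the two inequalities in question are literally the same formula. By Proposition \ref{AdjMod}, $\psi$ is a right adjoint $\Tth$-module $X\kmodto E$ if and only if
\[
k\le\bigvee_{\frx\in TX}\psi(\frx)\otimes\Txi\hat{a}(e_{T\hat{X}}\cdot e_{\hat{X}}(\psi),T\yoneda(\frx)),
\]
while Proposition \ref{CharLClsT}(ii), applied inside $\hat{X}$ to the subset $M=\yoneda(X)$, reads $\psi\in\overline{\yoneda(X)}$ if and only if
\[
k\le\bigvee_{\fru\in T(\yoneda(X))}\hat{a}(\fru,\psi)\otimes\Txi\hat{a}(Te_{\hat{X}}\cdot e_{\hat{X}}(\psi),\fru).
\]

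Three observations collapse the second inequality into the first. Firstly, naturality of $e$ applied to $e_{\hat{X}}:\hat{X}\to T\hat{X}$ yields $Te_{\hat{X}}\cdot e_{\hat{X}}=e_{T\hat{X}}\cdot e_{\hat{X}}$, so the inner argument of the second tensor factor agrees in both displays. Secondly, because $T$ satisfies \BC{} and therefore preserves both injections and surjections, the canonical factorisation $TX\twoheadrightarrow T(\yoneda(X))\hookrightarrow T\hat{X}$ shows that every $\fru\in T(\yoneda(X))$ has the form $T\yoneda(\frx)$ for some $\frx\in TX$, so that $\bigvee_{\fru\in T(\yoneda(X))}$ can be rewritten as $\bigvee_{\frx\in TX}$ via $\fru=T\yoneda(\frx)$. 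Thirdly, since $\psi:X^\op\to\V$ is a $\Tth$-functor (because $\psi\in\hat{X}$), the Yoneda lemma, Theorem \ref{Yoneda}(2), gives the crucial identity $\hat{a}(T\yoneda(\frx),\psi)=\psi(\frx)$. Substituting these three identifications transforms the closure inequality term-by-term into the right-adjoint inequality, establishing both implications at once.

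The only point meriting a brief verification is the surjectivity of $T\yoneda:TX\to T(\yoneda(X))$, equivalently that $T$ sends the image factorisation of $\yoneda:X\to\hat{X}$ to an image factorisation in $\SET$; this follows from $T$ preserving weak pullbacks. Once this is noted, no further computation is required---the two conditions coincide verbatim.
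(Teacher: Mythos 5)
Your proof is correct and is essentially the paper's own argument: the paper likewise derives the equivalence by combining Proposition \ref{AdjMod} with Theorem \ref{Yoneda} (which yields $\hat{a}(T\yoneda(\frx),\psi)=\psi(\frx)$ for $\psi\in\hat{X}$) and observing that the resulting inequality is exactly the closure criterion of Proposition \ref{CharLClsT}(ii) for $M=\yoneda(X)$; you merely spell out the reindexing and the identity $Te_{\hat{X}}\cdot e_{\hat{X}}=e_{T\hat{X}}\cdot e_{\hat{X}}$ that the paper leaves implicit. One small inaccuracy: the surjectivity of $T\yoneda:TX\to T(\yoneda(X))$ is not a consequence of \BC{}, but simply of the fact that every surjection in $\SET$ splits (Axiom of Choice), so any functor preserves it.
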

\begin{proof}
By Proposition \ref{AdjMod} and Theorem \ref{Yoneda}, $\psi$ is right adjoint if and only if
\begin{align*}
k&\le\bigvee_{\frx\in TX}\hat{a}(T\yoneda(\frx),\psi)\otimes \Txi\hat{a}(Te_{\hat{X}}\cdot e_{\hat{X}}(\psi),T\yoneda(\frx)),
\end{align*}
which means precisely that $\psi\in\overline{\yoneda(X)}$.
\end{proof}
The proposition above identifies $\tilde{X}$ as the L-closure of $\yoneda(X)$ in $\hat{X}$, and therefore as an L-complete $\Tth$-category. Furthermore, $\yoneda:X\to\tilde{X}$ is fully faithful and L-dense. Hence we have
\begin{theorem}
The full subcategory $\CatCompl{\Tth}$ of $\CatSep{\Tth}$ of L-complete $\Tth$-categories is an epi-reflective subcategory of $\CatSep{\Tth}$. The reflection map of a L-separated $\Tth$-category $X$ is given by any full L-dense embedding of $X$ into an L-complete and L-separated $\Tth$-category, for instance by $\yoneda:X\to\tilde{X}$.
\end{theorem}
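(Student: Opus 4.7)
The plan is to verify the universal property of the reflection directly, leveraging the characterization of L-completeness as L-injectivity (Theorem~\ref{CharComplT}) together with the preceding proposition identifying $\tilde{X}$ with the L-closure of $\yoneda(X)$ in $\hat{X}$.

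First I would collect the properties of the candidate reflection map $\yoneda : X \to \tilde{X}$ for an L-separated $\Tth$-category $X$. From the proposition just proved, $\tilde{X}$ is L-closed in $\hat{X}$, and since $\hat{X}$ is L-complete (because it is L-closed in the L-complete $|X|\multimap\V$), the first part of the preceding lemma gives that $\tilde{X}$ is itself L-complete. It is L-separated as a subobject of $\hat{X}$, which is L-separated by the corollary in the L-separation subsection. Moreover $\yoneda$ is fully faithful and L-dense, and since $X$ is L-separated, $\yoneda$ is injective. Thus $\tilde{X}$ is an object of $\CatCompl{\Tth}$ and $\yoneda : X \to \tilde{X}$ is a full L-dense embedding.

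Next I would establish the universal property. Let $m : X \hookrightarrow \bar{X}$ be any full L-dense embedding into an L-complete, L-separated $\Tth$-category $\bar{X}$ (in particular, $m = \yoneda_X$), and let $f : X \to Y$ be a $\Tth$-functor with $Y$ in $\CatCompl{\Tth}$. Since $m$ is fully faithful and L-dense, it is an L-equivalence. By Theorem~\ref{CharComplT}, $Y$ is L-injective, so there exists $g : \bar{X} \to Y$ with $g \cdot m \cong f$; L-separation of $Y$ then upgrades this to strict equality $g \cdot m = f$. For uniqueness, suppose $g, g' : \bar{X} \to Y$ satisfy $g \cdot m = g' \cdot m$; since $m$ is L-dense, the characterization of L-dense $\Tth$-functors as ``epimorphisms up to $\cong$'' gives $g \cong g'$, and L-separation of $Y$ yields $g = g'$. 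This is exactly the reflection universal property, and because L-dense $\Tth$-functors are precisely the epimorphisms of $\Cat{\Tth}$ (by the theorem in the L-separation subsection), the reflection is epi, so $\CatCompl{\Tth}$ is an epi-reflective subcategory of $\CatSep{\Tth}$.

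The proof is essentially a bookkeeping exercise once all earlier results are in place, so there is no serious obstacle; the only subtlety is making sure to pass from $\cong$ to $=$ at each step, which is what forces the reflection to land in the \emph{separated} L-complete objects and makes the uniqueness of $g$ work. Finally I would remark that the statement for an arbitrary L-dense full embedding into some L-complete L-separated $\bar{X}$ is obtained by the same argument, yielding the claimed flexibility in choosing the reflection map.
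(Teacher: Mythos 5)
Your proposal is correct and follows essentially the same route the paper intends: the theorem is stated as an immediate consequence of the preceding results ($\tilde{X}$ is the L-closure of $\yoneda(X)$ in the L-complete, L-separated $\hat{X}$, hence itself L-complete and L-separated, with $\yoneda$ a fully faithful L-dense embedding), and the universal property is obtained exactly as you do, via L-injectivity of the target from Theorem~\ref{CharComplT} together with L-separation to upgrade $\cong$ to $=$. Your write-up merely makes explicit the bookkeeping the paper leaves to the reader.
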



\end{document}